\pdfoutput=1
\documentclass[english]{jnsao}
\usepackage[utf8]{inputenc}

\usepackage[nameinlink,capitalize]{cleveref}
\usepackage{amsmath,amsfonts}
\usepackage{mathrsfs}
\usepackage{cancel}
\usepackage{changes}

\usepackage{enumitem}
\setlist[enumerate]{label=(\alph*)}
\makeatletter
\newcommand*{\greeknum}[1]{\expandafter\@greeknum\csname c@#1\endcsname}
\newcommand*{\@greeknum}[1]{\ifcase#1\or\alpha\or\beta\or\gamma\or\delta\else\@ctrerr\fi}
\makeatother
\AddEnumerateCounter{\greeknum}{\@greeknum}{$\beta$}
\usepackage{pdfrender}

\usepackage{marginnote}

\allowdisplaybreaks

\numberwithin{equation}{section}

\newcommand{\keywordsName}{Keywords}
\newcommand{\keywordsAnd}{~$\cdot$~}
\newcommand{\subclassName}{MSC (2020)}
\newcommand{\subclassAnd}{~$\cdot$~}

\newenvironment{keywords}{%
\vspace*{-0.75cm}
    \small%
    \quotation%
    \noindent%
    {\normalfont\sectfont\nobreak\keywordsName\quad}%
    }{%
    \endquotation%
    \vskip 0.7cm
}{}
\newenvironment{subclass}{%
	\vspace*{-0.75cm}
    \small%
    \quotation%
    \noindent%
    {\normalfont\sectfont\nobreak\subclassName\quad}%
    }{%
    \endquotation%
    \vskip 0.7cm
}{}

\newcommand{\mscLink}[1]{\href{http://www.ams.org/mathscinet/msc/msc2020.html?t=#1}{#1}}

\newcommand\norm[1]{\left\Vert#1\right\Vert}
\newcommand\nnorm[1]{\Vert#1\Vert}

\newcommand\innerprod[2]{\left\langle #1, #2\right\rangle}
\newcommand\ninnerprod[2]{\langle #1, #2\rangle}

\newcommand\N{\mathbb{N}}
\newcommand\R{\mathbb{R}}

\newcommand\B{\mathbb{B}}

\newcommand\XX{\mathscr X}
\newcommand\YY{\mathscr Y}
\newcommand\PP{\mathscr P}
\newcommand\NN{\mathscr N}
\newcommand\VV{\mathscr V}
\newcommand\WW{\mathscr W}
\newcommand\OON{\mathscr O}

\newcommand\LL{\mathcal L}
\newcommand\KK{\mathcal K}
\newcommand\QQ{\mathcal Q}
\renewcommand\SS{\mathcal S}
\newcommand\HH{\mathcal H}

\newcommand\tto{\rightrightarrows}

\newcommand{\cl}{\operatorname{cl}}

\newcommand{\dist}{\operatorname{dist}}

\newcommand{\cone}{\operatorname{cone}}

\newcommand{\dom}{\operatorname{dom}}

\newcommand{\gph}{\operatorname{gph}}
\newcommand{\epi}{\operatorname{epi}}

\DeclareMathAlphabet{\mathpzc}{OT1}{pzc}{m}{it}
\newcommand\oo{\mathpzc{o}}

\makeatletter
\long\def\@firstoffiveparen#1#2#3#4#5{\textup{\tagform@{#1}}}
\def\eqref@nolink#1{\textup{\tagform@{\ref*{#1}}}}
\def\eqref@link#1{%
\Hy@safe@activestrue
\expandafter\@setref\csname r@#1\endcsname\@firstoffiveparen{#1}%
\Hy@safe@activesfalse
}
\protected\def\eqref{\@ifstar\eqref@nolink\eqref@link}
\makeatother

\title{On error bounds and noncritical Lagrange multipliers}

\author{Mat\'{u}\v{s} Benko%
	\thanks{%
		Johann Radon Institute for Computational and Applied Mathematics,
		4040 Linz,
		Austria,
		\email{matus.benko@ricam.oeaw.ac.at},
		\orcid{0000-0003-3307-7939}%
		}
	\and
	Patrick Mehlitz%
	\thanks{%
		Dresden University of Technology,
		Institute of Numerical Mathematics,
		01062 Dresden,
		Germany,
		\email{patrick.mehlitz@tu-dresden.de},
		\orcid{0000-0002-9355-850X}%
		}
	}
\shortauthor{M.\ Benko \and P.\ Mehlitz}

\acknowledgements{
    The research of Mat\'u\v{s} Benko was supported by the Austrian Science Fund (FWF) 
	under grant P32832-N
	as well as by the infrastructure of the Institute of Computational Mathematics, 
	Johannes Kepler University Linz, Austria.
	\\
	\texttt{Claude} was used during manuscript preparation to assist with language editing 
	and improve clarity, readability, and reasoning.
	All mathematical content, proofs, arguments, references, and final wording were reviewed 
	and verified by the authors, who take full responsibility for the content.
}

\begin{document}

\maketitle

\begin{abstract}
    Critical Lagrange multipliers are known to be responsible for the potentially slow convergence of
	Newton-type methods, and noncriticality of a given multiplier has been tied to
	an error bound which estimates the distance of a primal-dual pair associated with
	a perturbed problem to the primal solution and the multiplier set of the
	unperturbed one, and is decisive for a local convergence analysis.
	We investigate this relation in an abstract setting where merely a set-valued
	mapping assigning primal-dual pairs to a parameter is available, and characterize
	the error bound of interest in terms of noncriticality of the underlying
	multiplier and two calmness-type properties of the associated multiplier mapping,
	one of which is a novel weak inner calmness in the fuzzy sense.
	No structural assumptions on the underlying problem are needed for that,
	and the two calmness-type conditions are not only sufficient but also necessary.
	Afterwards, we specify these findings for composite optimization problems.
	Whenever the subdifferential of the outer function is a polyhedral mapping,
	both conditions hold automatically, which explains the equivalence of
	noncriticality and the error bound observed in the polyhedral setting.
	Whenever the outer function is $C^2$-decomposable, they are secured by calmness
	of a restricted multiplier mapping together with a closedness condition,
	which recovers and extends the known characterization
	for $C^2$-cone reducible constraint systems.
\end{abstract}

\begin{keywords}
	Critical multipliers
	\keywordsAnd
	Error bounds
	\keywordsAnd
	Generalized nonlinear programming
	\keywordsAnd
	Isolated calmness
	\keywordsAnd
	Stability
	\keywordsAnd
	Variational analysis
\end{keywords}

\begin{subclass}
	\mscLink{49J52}
	\subclassAnd
	\mscLink{49J53}
	\subclassAnd
	\mscLink{90C30}
\end{subclass}

\section{Introduction}\label{sec:introduction}

In constrained or composite optimization,
the concept of (Lagrange) multipliers is fundamental for the derivation of optimality conditions,
the investigation of duality relations, and the construction of numerical solution algorithms 
like augmented Lagrangian, Lagrange--Newton, or sequential quadratic programming methods.
In this regard, the actual optimization variables, which we will denote by $x\in\R^n$ throughout,
may be interpreted as primal objects while the multipliers, represented by $y\in\R^m$,
play the role of dual objects.
Although one is typically interested in the identification of solution candidates $\bar x\in\R^n$,
optimality conditions and solution methods usually deliver primal-dual pairs $(\bar x,\bar y)$,
so that the multiplier enters the picture as an implicit variable whose presence is
owed to the chosen problem formulation rather than to intrinsic interest in its
value, see \cite{BenkoMehlitz2021,Mehlitz2026} for systematic studies of
implicit variables in mathematical optimization.
In situations where the multiplier $\bar y$ is not uniquely determined
in $\overline Y$, the set of multipliers associated with $\bar x$,
it has been recognized that the speed of convergence
for the numerical identification of $(\bar x,\bar y)$ 
heavily depends on the multiplier $\bar y\in\overline Y$,
which led to coining the notions of critical and noncritical multipliers, 
see the classical paper \cite{Izmailov2005} for the origin of these concepts.
Indeed, it is well documented that Newton-type methods 
are attracted by critical multipliers,
and that this attraction spoils the superlinear convergence one would expect otherwise,
see \cite{Izmailov2005,IzmailovSolodov2009} for the discovery of this phenomenon and
\cite{IzmailovSolodov2015} for a survey.
In standard nonlinear programming,
a multiplier $\bar y$ associated with $\bar x$
is noncritical if and only if a certain primal-dual error bound holds,
see \cite{IzmailovSolodov2012,IzmailovSolodov2015}.
This error bound estimates the distance of a primal-dual pair $(x,y)$
associated with a perturbed problem, parametrized in $p\in\R^\ell$,
to $\{\bar x\}\times\overline Y$, corresponding to the reference perturbation $\bar p\in\R^\ell$,
from above by a multiple of the distance from $p$ to $\bar p$.
Let us point out that the different roles of the primal and the dual variables are
already mirrored by this estimate:
while $x$ is compared with the single point $\bar x$,
the multiplier $y$ is merely compared with the whole set $\overline Y$.
Meanwhile, the notions of critical and noncritical multipliers have been extended 
far beyond standard nonlinear programming,
to variational systems, see \cite{MordukhovichSarabi2018,MordukhovichSarabi2019},
to composite optimization problems with data inducing a certain piecewise linear-quadratic structure,
see \cite{DoMordukhovichSarabi2021,Sarabi2022},
$C^2$-cone reducibility, see \cite{MordukhovichSarabi2019}, 
or $C^2$-decomposability, see \cite{HangSarabi2023},
and to general composite optimization problems, see \cite{BenkoMehlitz2024}.
In each of these settings, the key question has remained the same:
is noncriticality still equivalent to the error bound?
As long as the data is ``polyhedral enough'', which we will make precise below, 
the answer is affirmative and comes without any further assumptions.
Beyond polyhedrality, however, the question becomes a genuine challenge
as the equivalence has been established only for specific problem classes,
only under additional assumptions,
and only by means of long and technical arguments which are tailored to the respective setting.

In the first part of this paper,
we start from a rather abstract setting in order to characterize the aforementioned decisive
error bound in terms of noncriticality. Therefore, we merely consider a set-valued mapping
$M\colon\R^\ell\tto\R^n\times\R^m$ which,
when interpreted in practically relevant scenarios, 
assigns to each parameter $p\in\R^\ell$
the associated primal-dual pairs $(x,y)\in M(p)$.
Decoupling the overall analysis from the precise form of the underlying optimization problem
allows us to get to the bottom of the issue, unburdened by anything like
problem structure, differentiability properties, or validity of constraint qualifications.
A first important result, \cref{thm:full_characterization_of_error_bound}, 
reveals that validity of the error bound is equivalent to noncriticality of the involved multiplier
together with the calmness of the multiplier mapping $Y\colon\R^\ell\times\R^n\tto\R^m$,
given by $Y(p,x)=\{y\,|\,(x,y)\in M(p)\}$,
and a novel weak inner calmness property of $Y$.
Let us emphasize that this characterization does not rely on any structural properties
of $M$, and that the two calmness-type conditions are not only sufficient
in the presence of noncriticality but also necessary.
In this sense, \cref{thm:full_characterization_of_error_bound} uncovers the very
mechanism behind the interplay of noncriticality and error bounds.

The second part of this paper is devoted to the specification of this result
in the framework of composite optimization.
Therefore, we consider the composite problem
\begin{equation}\label{eq:composite_problem}\tag{CP}
	\min\limits_x\quad f_0(x) + g(F(x))\quad\text{s.t.}\quad x\in\R^n,
\end{equation}
where $f_0\colon\R^n\to\R$ and $F\colon\R^n\to\R^m$ are sufficiently smooth while $g\colon\R^m\to\overline{\R}$ is assumed to be proper and lower semicontinuous.
Precise assumptions on the data functions are formulated in \cref{ass:composite_problem_basic} below.
Let us mention that, in the situation where $g$ is convex, this setting of composite optimization 
has been named generalized nonlinear programming in, e.g., 
\cite{BenkoRockafellar2024,Rockafellar2023,Rockafellar2023b} as it covers, exemplary,
standard nonlinear, nonlinear second-order cone, as well as sparse optimization, and many more.
Allowing $g$ to be nonconvex opens the model up to further applications from data science, e.g.,
if $g$ is chosen to be the $\ell_p$-quasi-norm for $p\in[0,1)$.
Numerical algorithms for the computational solution of \eqref{eq:composite_problem},
like augmented Lagrangian, see e.g.\
\cite{BolteSabachTeboulle2018,DeMarchiJiaKanzowMehlitz2023,DeMarchiMehlitz2024,HangSarabi2021,HangSarabi2023,Rockafellar2023,Rockafellar2023b,SabachTeboulle2019}, 
or sequential quadratic programming methods, see e.g.\
\cite{MohammadiMordukhovichSarabi2021,Sarabi2022}, 
focus on the identification of so-called stationary points of \eqref{eq:composite_problem}.
These are points $\bar x\in\R^n$ 
for which a multiplier $\bar y\in\R^m$ exists such that
\begin{equation}\label{eq:stationarity}
	\nabla_x \LL(\bar x,\bar y)=0,\quad \bar y\in\partial g(F(\bar x))
\end{equation}
is valid. Above, $\LL\colon\R^n\times\R^m\to\R$ given by
\begin{equation}\label{eq:Lagrangian}
	\LL(x,y):= f_0(x)+\innerprod{y}{F(x)}
\end{equation}
is referred to as the Lagrangian function associated with \eqref{eq:composite_problem},
and $\partial g(F(\bar x))$ is the limiting subdifferential of $g$ at $F(\bar x)$.
It is well known that whenever $\bar x$ is a local minimizer of \eqref{eq:composite_problem}
satisfying a suitable qualification condition, e.g.,
\begin{equation*}
	F'(\bar x)^\top y=0,\,y\in\partial^\infty g(F(\bar x))
	\quad\Longrightarrow\quad
	y=0,
\end{equation*}
then $\bar x$ is, indeed, stationary.
Here, $\partial^\infty g(F(\bar x))$ represents 
the singular limiting subdifferential of $g$ at $F(\bar x)$.
Hence, in the setting of composite optimization, 
the mapping $M$ from above assigns to parameters $p\in\R^\ell$ the set
of all primal-dual pairs which solve a parametric in $p$ version of \eqref{eq:stationarity}.
Details are provided at the beginning of \cref{sec:composite_optimization}.

We embed our findings in those ones from the literature which address \eqref{eq:composite_problem}.
To the best of our knowledge, the only paper which is concerned with noncritical multipliers
in the full generality of \eqref{eq:composite_problem} is our precursor \cite{BenkoMehlitz2024},
where a similar inner calmness-type assumption already plays a key role.
Its focus, however, is on characterizing whether or not all multipliers associated
with $\bar x$ are noncritical, which complements the standard approach, pursued here,
of tying noncriticality of an individual multiplier to the error bound of interest.
Results of the latter type are available only for particular classes of the function $g$.
In \cite{DoMordukhovichSarabi2021,MordukhovichSarabi2018,Sarabi2022},
the authors consider some special situations which have in common that
the graph of the subdifferential mapping $\partial g\colon\R^m\tto\R^m$ is a polyhedral set.
In these papers, it is proven that validity of the error bound and noncriticality of the
chosen multiplier are actually the same.
We justify these findings in the light of \cref{thm:full_characterization_of_error_bound} 
by proving in \cref{sec:gph_subdiff_polyhedral} that
the additional calmness-type assumptions are inherently satisfied whenever $\partial g$ 
has a polyhedral graph,
which leads to the characterization stated in \cref{thm:gph_of_subdifferential_polyhedral}. 
This also covers earlier contributions which address standard
nonlinear programming, where, in \eqref{eq:composite_problem},
the role of $g$ is played by the indicator function of a special convex polyhedral cone, 
see e.g.\ \cite{Izmailov2005,IzmailovKurennoySolodov2015,IzmailovSolodov2009,IzmailovSolodov2012,IzmailovSolodov2015}.
A second group of results in the literature 
is concerned with the setting where the function $g$ is so-called $C^2$-decomposable,
see \cite{Shapiro2003} for the origin of this concept.
In \cite[Theorem~5.6]{MordukhovichSarabi2019},
where $g$ is the indicator function of a $C^2$-cone reducible set,
validity of the error bound has been shown to be equivalent to the noncriticality of the
chosen multiplier, provided that a restricted multiplier mapping is calm and that
some closedness assumption is satisfied,
while related error bound results for $C^2$-decomposable functions,
obtained under a second-order sufficient condition, can be found in \cite{HangSarabi2023}.
The latter follow a very interesting alternative route which,
relying on the novel concept of a semi-strict graphical derivative,
does not require any closedness assumption.
Again, we are in a position to show in \cref{sec:C2_cone_reducible} that these additional
properties enter the analysis precisely through the calmness-type assumptions postulated
in \cref{thm:full_characterization_of_error_bound},
so that the characterization from \cite[Theorem~5.6]{MordukhovichSarabi2019} is fully
recovered while the auxiliary results
\cite[Lemmas~5.2--5.5]{MordukhovichSarabi2019} become superfluous.
Beyond that, this route remains viable far outside the reducible setting:
in \cref{sec:decomposable}, we obtain a complete characterization of the error bound
for the entire class of $C^2$-decomposable functions,
see \cref{thm:decomposable_characterization}.

This paper, thus, provides a complete and fundamental understanding of the connection between
noncriticality and primal-dual error bounds,
and we show that all characterizations of noncriticality in terms of error bounds
which are available in the literature are consequences of it.

The remainder of the paper is structured as follows.
In \cref{sec:preliminaries}, we summarize the notation used in this paper,
recall some calmness-type notions for set-valued mappings, and introduce a novel notion
of weak inner calmness.
\Cref{sec:main} is dedicated to the analysis in the abstract setting.
The stage is set in \cref{sec:critical_multipliers} where we introduce the error bounds of interest
as well as a rather abstract notion of noncriticality.
The decisive result, which characterizes these error bounds in terms of noncriticality
and additional calmness-type conditions of the multiplier mapping, 
is stated and proven in \cref{sec:main_results},
a discussion follows in \cref{sec:main_results_discussion}.
Furthermore, the results are embedded into the context of parametric optimization
in \cref{sec:perturbations}.
In \cref{sec:composite_optimization}, 
we investigate our earlier findings in the context of composite optimization.
The setting is detailed in \cref{sec:basics_CP},
where we also present some basic observations.
Some particular emphasis is laid on the settings where the function $g$ 
in \eqref{eq:composite_problem} possesses a subdifferential whose graph is a polyhedral set,
see \cref{sec:gph_subdiff_polyhedral}, and where $g$ is $C^2$-decomposable,
see \cref{sec:beyond_polyhedrality}.
\Cref{sec:conclusions} closes the paper with some concluding remarks.

\section{Notation and preliminaries}\label{sec:preliminaries}

In this paper, we mainly rely on standard notation as used in the textbooks 
\cite{DontchevRockafellar2014,Mordukhovich2018,RockafellarWets1998}.

\subsection{Fundamental notation}

Throughout, we equip $\R^n$, the space of all real vectors with $n\in\N$ components,
with the Euclidean inner product $\innerprod{\cdot}{\cdot}$ and the associated Euclidean norm
$\norm{\cdot}$. The closed unit ball of suitable dimensions is denoted by $\B$.
Given $\Omega\subset\R^n$ and $\bar x\in\R^n$,
we exploit $\dist(\bar x,\Omega):=\inf_x\{\nnorm{x-\bar x}\,|\,x\in\Omega\}$ 
to represent the distance of $\bar x$ to $\Omega$,
where $\dist(\bar x,\emptyset):=\infty$ is stipulated.
For brevity of notation,
we exploit $\Omega+\bar x := \bar x + \Omega := \{x+\bar x\,|\,x\in\Omega\}$.
Furthermore, $\cl\Omega$ represents the closure of $\Omega$, and the set
$\cone\Omega:=\{\alpha x\,|\,\alpha\geq 0,\,x\in\Omega\}$ is the cone generated by $\Omega$.
The annihilator of $\bar x$ is given by $\{\bar x\}^\perp := \{y\,|\,\innerprod{y}{\bar x}=0\}$.
The set $\Omega^\circ:=\{y\,|\,\forall x\in\Omega\colon\,\innerprod{y}{x}\leq 0\}$
is the polar cone of $\Omega$ and constitutes a closed, convex cone.
Whenever $\bar x\in\Omega$ holds true,
$T_\Omega(\bar x)$ and $N_\Omega(\bar x)$ denote the tangent cone and
the limiting normal cone to $\Omega$ at $\bar x$, respectively,
and the latter equals the normal cone in the sense of convex analysis if $\Omega$ is a convex set.
Whenever $\Omega$ is a closed, convex cone, $N_\Omega(\bar x)=\Omega^\circ\cap\{\bar x\}^\perp$
is valid.
The transpose and the kernel of a matrix $A\in\R^{m\times n}$ are denoted by 
$A^\top$ and $\ker A:=\{x\,|\,Ax=0\}$, respectively.
Let us recall that a set is said to be polyhedral whenever it can be represented 
as the union of finitely many convex polyhedral sets.
Notation $\oo$ represents a generic function with the property $\lim_{t\downarrow 0}\oo(t)/t = 0$.

In this paper, $\overline \R:=\R\cup\{-\infty,\infty\}$ is the extended real line.
Given an extended-real valued function $\varphi\colon\R^n\to\overline\R$,
$\dom\varphi:=\{x\,|\,\varphi(x)<\infty\}$ and $\epi\varphi:=\{(x,\alpha)\,|\,\varphi(x)\leq\alpha\}$ 
are the domain and the epigraph of $\varphi$, respectively.
We say that $\varphi$ is proper 
if $\dom\varphi\neq\emptyset$ and $\varphi(x)>-\infty$ for all $x\in\R^n$.
Furthermore, $\varphi$ is referred to as lower semicontinuous (lsc) if $\epi\varphi$ is closed.
Given $\bar x\in\dom\varphi$, 
the sets $\partial\varphi(\bar x)$ and $\partial^\infty\varphi(\bar x)$ represent 
the limiting and singular limiting subdifferential of $\varphi$ at $\bar x$, respectively.
For a nonempty, closed set $\Omega\subset\R^n$, $\delta_\Omega\colon\R^n\to\overline\R$ denotes
the indicator function of $\Omega$ which vanishes on $\Omega$ 
and takes value $\infty$ on $\R^n\setminus\Omega$.
We note that $\delta_\Omega$ is proper and lsc, 
and, for some point $\bar x\in\Omega$, it holds
$\partial\delta_\Omega(\bar x)=\partial^\infty\delta_\Omega(\bar x)=N_\Omega(\bar x)$.

Whenever $H\colon\R^n\to\R^m$ is continuously differentiable and $\bar x$ is arbitrarily chosen,
then the matrix $H'(\bar x)\in\R^{m\times n}$ denotes the Jacobian of $H$ at $\bar x$.
For a vector $\xi\in\R^m$, $\innerprod{\xi}{H}\colon\R^n\to\R$ given by
$\innerprod{\xi}{H}(x):=\innerprod{\xi}{H(x)}$ for all $x\in\R^n$ is called
the scalarization mapping of $H$ for $\xi$.
For a real-valued continuously differentiable function $h\colon\R^n\to\R$,
$\nabla h(\bar x):=h'(\bar x)^\top$ is the gradient of $h$ at $\bar x$.
Whenever $h$ is twice continuously differentiable,
we use $\nabla^2 h(\bar x):=(\nabla h)'(\bar x)$ to represent the Hessian of $h$ at $\bar x$.
Note that $\nabla\innerprod{\xi}{H}(\bar x)=H'(\bar x)^\top\xi$, and
$\nabla^2\innerprod{\xi}{H}(\bar x)=\sum_{i=1}^m\xi_i\nabla^2H_i(\bar x)$ holds
if $H$ is twice continuously differentiable,
where $H_1,\ldots,H_m\colon\R^n\to\R$ denote the component functions of $H$.
Finally, let us mention that partial derivatives w.r.t.\ certain blocks of variables
are represented in the canonical way using indices.
	
\subsection{Set-valued mappings}

For a set-valued mapping $\Upsilon\colon\R^n\tto\R^m$,
$\dom\Upsilon:=\{x\,|\,\Upsilon(x)\neq\emptyset\}$ and $\gph\Upsilon:=\{(x,y)\,|\,y\in\Upsilon(x)\}$
denote the domain and the graph of $\Upsilon$, respectively.
Whenever $\gph\Upsilon$ is a polyhedral set, $\Upsilon$ is said to be a polyhedral mapping.
Furthermore, the set-valued mapping $\Upsilon^{-1}\colon\R^m\tto\R^n$ given by 
$\gph\Upsilon^{-1}:=\{(y,x)\,|\,(x,y)\in\gph\Upsilon\}$ is referred to as the inverse of $\Upsilon$.
Let us fix $(\bar x,\bar y)\in\gph\Upsilon$.
Then $\Upsilon$ is called calm at $(\bar x,\bar y)$ whenever there exist neighborhoods
$\XX\subset\R^n$ of $\bar x$ and $\YY\subset\R^m$ of $\bar y$ as well as a constant $\kappa>0$
such that
\[
	\forall x\in\XX,\,\forall y\in \Upsilon(x)\cap\YY\colon\quad
	\dist(y,\Upsilon(\bar x))\leq\kappa\nnorm{x-\bar x}.
\]
Replacing the set $\Upsilon(\bar x)$ in this estimate by the singleton $\{\bar y\}$ 
leads to the stronger estimate
\[
	\forall x\in\XX,\,\forall y\in \Upsilon(x)\cap\YY\colon\quad
	\nnorm{y-\bar y}\leq\kappa\nnorm{x-\bar x}
\]
which is referred to as the isolated calmness of $\Upsilon$ at $(\bar x,\bar y)$,
and it implies that $\bar y$ is locally isolated in $\Upsilon(\bar x)$.
The set-valued mapping $D\Upsilon(\bar x,\bar y)\colon\R^n\tto\R^m$ given by
$\gph D\Upsilon(\bar x,\bar y):=T_{\gph\Upsilon}(\bar x,\bar y)$
is called the graphical derivative of $\Upsilon$ at $(\bar x,\bar y)$.
Whenever $\gph\Upsilon$ is closed locally around $(\bar x,\bar y)$,
then $\Upsilon$ is isolatedly calm at $(\bar x,\bar y)$ if and only if
$D\Upsilon(\bar x,\bar y)(0)=\{0\}$ holds, see \cite[Theorem~4E.1]{DontchevRockafellar2014},
and this tractable condition is well known as the Levy--Rockafellar criterion.

In the following definition, a new calmness type property for set-valued mappings is introduced.

\begin{definition}\label{def:weak_fuzzy_inner_calmness}
	Let $\Upsilon\colon\R^n\tto\R^m$ be a set-valued mapping,
	and fix $\bar x\in\dom\Upsilon$.
	For some direction $d^x\in\R^n$ and $\bar y \in \Upsilon(\bar x)$,
	we say that $\Upsilon$ is \emph{weakly inner calm in the fuzzy sense}
	at $(\bar x,\bar y)$ in direction $d^x$ if,
	whenever there exist sequences $t_k\downarrow 0$, $d_k^x\to d^x$, and $y_k\to\bar y$
	with $\bar x+t_kd_k^x\in\dom\Upsilon$ and $y_k\in\Upsilon(\bar x+t_kd_k^x)$
	for all $k\in\N$,
	then there are, among the sequences with these properties,
	ones which, for some constant $\kappa>0$, additionally satisfy
	$\nnorm{y_k-\bar y}\leq\kappa t_k\nnorm{d_k^x}$ for all $k\in\N$.
\end{definition}

Let us note that the new concept from \cref{def:weak_fuzzy_inner_calmness} relaxes the
one of inner calmness in the fuzzy sense from \cite[Section~4]{BenkoMehlitz2022} in two ways.
First, the original definition is put into a directional context by incorporating direction $d^x$.
Second, it holds automatically if there do not exist sequences 
$t_k\downarrow 0$, $d^x_k\to d^x$, and $y_k\to\bar y$ such that $y_k\in\Upsilon(\bar x + t_kd^x_k)$
is valid for all $k\in\N$.

Obviously, whenever $d^x\notin T_{\dom\Upsilon}(\bar x)$, 
then $\Upsilon$ is weakly inner calm in the fuzzy sense at $(\bar x,y)$
in direction $d^x$ for each $y\in\Upsilon(\bar x)$.
Furthermore, $\Upsilon$ is always weakly inner calm in the fuzzy sense at
$(\bar x,\bar y)$ in direction $d^x=0$,
and whenever $\Upsilon$ is weakly inner calm in the fuzzy sense at $(\bar x,\bar y)$
in direction $d^x\neq 0$, then it is also weakly inner calm in the fuzzy sense
in direction $\alpha d^x$ for each $\alpha>0$.
Hence, one could restrict the directions in \cref{def:weak_fuzzy_inner_calmness}
to the unit sphere without losing any information.

Let us conclude this subsection by relating weak inner calmness in the fuzzy sense
in nonzero directions to the graphical derivative.
The key observation behind the sufficiency part is that tangent directions
come along with realizing sequences, and these exhibit the postulated rate.

\begin{lemma}\label{Lem:weak_fuzzy_ic}
	Let $\Upsilon\colon\R^n\tto\R^m$ be a set-valued mapping,
	fix $(\bar x,\bar y)\in\gph\Upsilon$ and a direction $d^x \in \R^n$ such that $d^x\neq 0$,
	and assume that there exist sequences $t_k\downarrow 0$, $d_k^x\to d^x$, and $y_k\to\bar y$
	with $\bar x+t_kd_k^x\in\dom\Upsilon$ and $y_k\in\Upsilon(\bar x+t_kd_k^x)$
	for all $k\in\N$.
	Then $\Upsilon$ is weakly inner calm in the fuzzy sense
	at $(\bar x,\bar y)$ in direction $d^x$
	if and only if $d^x \in \dom D\Upsilon(\bar x,\bar y)$.
\end{lemma}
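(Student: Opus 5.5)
Both implications go through the definition of the tangent cone: $(d^x,d^y)\in T_{\gph\Upsilon}(\bar x,\bar y)$ holds if and only if there are $t_k\downarrow 0$, $d_k^x\to d^x$, and $d_k^y\to d^y$ with $\bar y+t_kd_k^y\in\Upsilon(\bar x+t_kd_k^x)$ for all $k\in\N$. The standing assumption that some realizing sequences exist is what makes the weak inner calmness in the fuzzy sense a genuine requirement rather than a vacuous one.

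For necessity, assume $\Upsilon$ is weakly inner calm in the fuzzy sense at $(\bar x,\bar y)$ in direction $d^x$. Since sequences with the properties from \cref{def:weak_fuzzy_inner_calmness} exist by assumption, the definition provides sequences $t_k\downarrow 0$, $d_k^x\to d^x$, and $y_k\to\bar y$ with $y_k\in\Upsilon(\bar x+t_kd_k^x)$ and $\nnorm{y_k-\bar y}\leq\kappa t_k\nnorm{d_k^x}$ for some $\kappa>0$. Set $d_k^y:=(y_k-\bar y)/t_k$. Then $\nnorm{d_k^y}\leq\kappa\nnorm{d_k^x}$, and this is bounded because $d_k^x\to d^x$. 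Passing to a subsequence, we get $d_k^y\to d^y$ for some $d^y$. Hence $(d^x,d^y)\in T_{\gph\Upsilon}(\bar x,\bar y)$, that is, $d^y\in D\Upsilon(\bar x,\bar y)(d^x)$, and therefore $d^x\in\dom D\Upsilon(\bar x,\bar y)$.

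For sufficiency, take $d^y\in D\Upsilon(\bar x,\bar y)(d^x)$ together with realizing sequences $t_k\downarrow 0$, $d_k^x\to d^x$, and $d_k^y\to d^y$ with $\bar y+t_kd_k^y\in\Upsilon(\bar x+t_kd_k^x)$. Define $y_k:=\bar y+t_kd_k^y$. Then $y_k\to\bar y$ and $\bar x+t_kd_k^x\in\dom\Upsilon$, so these are sequences of the required type. It remains to check the rate. Because $d^x\neq 0$, we may discard finitely many indices and assume $\nnorm{d_k^x}\geq\nnorm{d^x}/2>0$ and $\nnorm{d_k^y}\leq\nnorm{d^y}+1$. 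With
\[
	\kappa:=\frac{2(\nnorm{d^y}+1)}{\nnorm{d^x}}
\]
this gives $\nnorm{y_k-\bar y}=t_k\nnorm{d_k^y}\leq\kappa t_k\nnorm{d_k^x}$ for all $k$, which is exactly weak inner calmness in the fuzzy sense.

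There is no real obstacle; the only care needed is at two points. In the sufficiency part, the hypothesis $d^x\neq0$ is what bounds $\nnorm{d_k^x}$ away from zero. In the necessity part, the definition only yields sequences once some exist, and the standing assumption guarantees this.
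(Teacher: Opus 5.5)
Your proposal is correct and follows the paper's own proof essentially step for step. For necessity, the paper likewise extracts a convergent subsequence of the bounded quotients $(y_k-\bar y)/t_k$. For sufficiency, it likewise takes the realizing sequences of a tangent $(d^x,d^y)$, passes to a tail, and uses the same constant $\kappa=2(\|d^y\|+1)/\|d^x\|$.
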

\begin{proof}
	Assume first that $\Upsilon$ is weakly inner calm in the fuzzy sense
	at $(\bar x,\bar y)$ in direction $d^x$.
	Then some sequences as postulated additionally satisfy
	$\nnorm{y_k-\bar y}\leq\kappa t_k\nnorm{d_k^x}$
	for all $k\in\N$ and some constant $\kappa>0$.
	Hence, the sequence given by $d^y_k:=(y_k-\bar y)/t_k$, $k\in\N$, is bounded,
	and so it converges to some $d^y\in\R^m$ along a subsequence (without relabeling).
	Due to $(\bar x+t_kd_k^x,\bar y+t_kd^y_k)\in\gph\Upsilon$ for all $k\in\N$,
	this gives $d^y\in D\Upsilon(\bar x,\bar y)(d^x)$, i.e., $d^x\in\dom D\Upsilon(\bar x,\bar y)$.

	Conversely, suppose that $d^x\in\dom D\Upsilon(\bar x,\bar y)$
	and pick $d^y\in D\Upsilon(\bar x,\bar y)(d^x)$.
	By definition of the graphical derivative, there are sequences
	$\tilde t_k\downarrow 0$, $\tilde d_k^x\to d^x$, and $\tilde d^y_k\to d^y$ such that
	$\tilde y_k:=\bar y+\tilde t_k\tilde d^y_k\in\Upsilon(\bar x+\tilde t_k\tilde d_k^x)$ 
	holds for all $k\in\N$.
	Clearly, we have $\tilde y_k\to\bar y$ and, for all sufficiently large $k\in\N$,
	$\nnorm{\tilde d_k^x}\geq\nnorm{d^x}/2$ 
	as well as $\nnorm{\tilde d^y_k}\leq\nnorm{d^y}+1$, so that
	\[
		\nnorm{\tilde y_k-\bar y}
		=
		\tilde t_k\nnorm{\tilde d^y_k}
		\leq
		\frac{2(\nnorm{d^y}+1)}{\nnorm{d^x}}\,\tilde t_k\nnorm{\tilde d_k^x}.
	\]
	Considering, if necessary, the tail of the sequences,
	the requirements of \cref{def:weak_fuzzy_inner_calmness} are met.
\end{proof}

Let us note that, in the absence of sequences as postulated in \cref{Lem:weak_fuzzy_ic},
$\Upsilon$ is trivially weakly inner calm in the fuzzy sense at $(\bar x,\bar y)$
in direction $d^x$, so that, for nonzero directions,
\cref{Lem:weak_fuzzy_ic} provides a complete characterization of this property.

\section{Error bounds versus noncriticality in an abstract setting}\label{sec:main}

In this section, we extend the notions of noncritical and critical multipliers to an arbitrary mapping
$M\colon\R^\ell \tto\R^n\times\R^m$
that assigns a set of pairs $(x,y) \in \R^n\times\R^m$ to a vector of parameters $p \in \R^\ell $.
Throughout the section, 
we assume that $M$ has a closed graph locally around the fixed reference point
$(\bar p, (\bar x, \bar y))\in\gph M$.
Following the reasoning from \cref{sec:introduction},
we perceive $x$ as the true target (the primal solution)
and $y$ as merely the accompanying multiplier (the dual solution),
and this asymmetry is reflected in the error bounds under consideration.
We also state here 
a characterization of these error bounds in terms of noncriticality,
and provide a comprehensive discussion and comparison with the literature.

\subsection{On error bounds and noncriticality}\label{sec:critical_multipliers}

Calmness of $M$ at the reference point corresponds to
the existence of neighborhoods $\PP\subset\R^\ell $ of $\bar p$, $\XX\subset\R^n$ of $\bar x$, and $\YY\subset\R^m$ of $\bar y$,
and a constant $\kappa>0$ such that
\begin{equation*}
	\forall p\in \PP,\,\forall (x,y)\in M(p)\cap (\XX\times\YY)\colon
	\quad \dist((x,y), M(\bar p)) \leq\kappa \norm{p - \bar p}.
\end{equation*}
The isolated calmness of $M$
corresponds to replacing the set $M(\bar p)$ above by the point $(\bar x, \bar y)$, i.e.,
\begin{equation*}
	\forall p\in \PP,\,\forall (x,y)\in M(p)\cap (\XX\times\YY)\colon
	\quad \norm{x-\bar x} + \norm{y-\bar y} \leq\kappa \norm{p - \bar p},
\end{equation*}
and is fully characterized by the Levy--Rockafellar criterion
\begin{equation}\label{eq:isolated_calmness_M1}
	(d^x,d^y) \in DM(\bar p,(\bar x,\bar y))(0)
	\quad\Longrightarrow\quad
	d^x = 0,\,d^y=0.
\end{equation}
The appeal of these conditions lies in the proportional control they provide.
The distance of the primal-dual estimate $(x,y)$ from the primal-dual solution set $M(\bar p)$,
respectively from the pair $(\bar x, \bar y)$, is bounded by the size of perturbations,
and this is essential for convergence analysis of numerical procedures.
At the same time, calmness does not identify any particular primal solution,
while the isolated calmness specifies both as
it necessitates $M(\bar p)\cap (\XX\times\YY) = \{(\bar x,\bar y)\}$.
Practically relevant applications, however, often require the possibility for nonunique multipliers,
yet pinning down the primal solution benefits of the analysis considerably.

We will explore a natural idea of mixing the two modes:
the dependence on $p$ shall be isolatedly calm in $x$ and only calm in $y$.
In order to address the behavior of multipliers, we introduce the multiplier mapping $Y\colon\R^\ell \times\R^n\tto\R^m$ by
\begin{equation*}
	Y(p,x)
	:=
	\{y\,|\,(x,y) \in M(p)\}.
\end{equation*}
Note that $Y$ possesses the same graph as $M$.
Moreover, we denote by
\[
	\overline Y := Y(\bar p,\bar x)
\]
the whole set of multipliers associated with $\bar x$ given the reference perturbation $\bar p$,
which particularly contains $\bar y$.

We are concerned with the following relaxations of isolated calmness,
which no longer carry the undesirable symmetrical treatment of $x$ and $y$.
\begin{definition}\label{def:semi_isolated_calmness_M1}
	\hfill
	\begin{enumerate}
		\item[(i)] We say that $M$ is \emph{partially isolatedly calm w.r.t.\ $x$} at $(\bar p,(\bar x,\bar y))$
			if and only if there exist a neighborhood $\PP\subset\R^\ell $ of $\bar p$,
			neighborhoods $\XX\subset\R^n$ of $\bar x$ and $\YY\subset\R^m$ of $\bar y$,
			and a constant $\kappa>0$
			such that
			\begin{equation}\label{eq:partial_isolated_calmness}
				\forall p\in \PP,\,\forall (x,y)\in M(p)\cap (\XX\times\YY)\colon
				\quad \norm{x-\bar x}\leq\kappa\norm{p-\bar p}.
			\end{equation}
		\item[(ii)] We say that $M$ is \emph{semi-isolatedly calm w.r.t.\ $x$} at $(\bar p,(\bar x,\bar y))$
			if and only if there exist a neighborhood $\PP\subset\R^\ell $ of $\bar p$,
			neighborhoods $\XX\subset\R^n$ of $\bar x$ and $\YY\subset\R^m$ of $\bar y$,
			and a constant $\kappa>0$
			such that
			\begin{equation}\label{eq:semi_isolated_calmness}
				\forall p\in \PP,\,\forall (x,y)\in M(p)\cap (\XX\times\YY)\colon
				\quad\norm{x-\bar x}+\dist(y,\overline Y)\leq\kappa\norm{p-\bar p}.
			\end{equation}
	\end{enumerate}
\end{definition}
The name semi-isolated calmness is justified on account of the notion being stronger than standard calmness
and milder than the isolated calmness. Note also that it controls the distance of $(x,y)$ from the set
\[
	M(\bar p)\cap(\{\bar x\}\times\R^m)
	= \{(x,y)\,|\,y\in Y(\bar p,x)\} \cap(\{\bar x\}\times\R^m)
	= \{\bar x\}\times\overline Y,
\]
which is precisely the type of estimate needed for the local convergence analysis 
in settings with nonunique multipliers.
The notion has been tied to noncriticality of the multiplier since the introduction of the latter,
see \cite{Izmailov2005,IzmailovSolodov2012,IzmailovSolodov2015},
and the connection between them has remained the central theme as noncriticality has been extended
to much broader frameworks, see \cite{DoMordukhovichSarabi2021,HangSarabi2023,MordukhovichSarabi2018,MordukhovichSarabi2019,Sarabi2022}.

We will now clarify the simple relationships between the various calmness properties of $M$,
revealing the role of calmness of $Y$.

\begin{lemma}\label{lem:relations_calmness_x}
\hfill
\begin{enumerate}
		\item[(i)] Mapping $M$ is semi-isolatedly calm w.r.t.\ $x$ at $(\bar p,(\bar x,\bar y))$
			if and only if it is partially isolatedly calm w.r.t.\ $x$ at $(\bar p,(\bar x,\bar y))$
			while $Y$ is calm at $((\bar p,\bar x),\bar y)$.
		\item[(ii)] Mapping $M$ is isolatedly calm at $(\bar p,(\bar x,\bar y))$
			if and only if it is partially isolatedly calm w.r.t.\ $x$ at $(\bar p,(\bar x,\bar y))$
			while $Y$ is isolatedly calm at $((\bar p,\bar x),\bar y)$.
\end{enumerate}
\end{lemma}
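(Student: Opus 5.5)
Both equivalences follow by unwinding the definitions, with the main work being a choice of neighborhoods and constants. Calmness and isolated calmness of $Y$ at $((\bar p,\bar x),\bar y)$ refer to the base point $(\bar p,\bar x)$ and to the neighborhood $\PP\times\XX$. The estimates read $\dist(y,\overline Y)\leq\kappa(\nnorm{p-\bar p}+\nnorm{x-\bar x})$ and $\nnorm{y-\bar y}\leq\kappa(\nnorm{p-\bar p}+\nnorm{x-\bar x})$, respectively, for $y\in Y(p,x)\cap\YY$. Any equivalent norm on $\R^\ell\times\R^n$ can be used here, at the cost of changing constants. Recall also that $y\in Y(p,x)$ if and only if $(x,y)\in M(p)$.

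For (i), sufficiency goes as follows. Let partial isolated calmness hold with $\PP_1,\XX_1,\YY_1,\kappa_1$, and calmness of $Y$ with $\PP_2\times\XX_2$, $\YY_2$, $\kappa_2$ (shrinking to a product neighborhood if needed). Take $\PP:=\PP_1\cap\PP_2$, $\XX:=\XX_1\cap\XX_2$, and $\YY:=\YY_1\cap\YY_2$. For $p\in\PP$ and $(x,y)\in M(p)\cap(\XX\times\YY)$ we have $\nnorm{x-\bar x}\leq\kappa_1\nnorm{p-\bar p}$. We also have
\[
\dist(y,\overline Y)\leq\kappa_2(\nnorm{p-\bar p}+\nnorm{x-\bar x})\leq\kappa_2(1+\kappa_1)\nnorm{p-\bar p}.
\]
Summing gives \eqref{eq:semi_isolated_calmness} with $\kappa:=\kappa_1+\kappa_2(1+\kappa_1)$.

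For necessity in (i), partial isolated calmness is immediate, since $\nnorm{x-\bar x}\leq\nnorm{x-\bar x}+\dist(y,\overline Y)$. For calmness of $Y$, take the neighborhoods $\PP\times\XX$ and $\YY$ from \eqref{eq:semi_isolated_calmness}. For $(p,x)\in\PP\times\XX$ and $y\in Y(p,x)\cap\YY$ we get $(x,y)\in M(p)\cap(\XX\times\YY)$. Hence
\[
\dist(y,\overline Y)\leq\kappa\nnorm{p-\bar p}\leq\kappa(\nnorm{p-\bar p}+\nnorm{x-\bar x}),
\]
which is calmness of $Y$ because $Y(\bar p,\bar x)=\overline Y$.

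Part (ii) is verbatim the same, with $\dist(y,\overline Y)$ replaced by $\nnorm{y-\bar y}$. Sufficiency combines the two estimates exactly as in (i). Necessity drops one summand: dropping $\nnorm{y-\bar y}$ gives partial isolated calmness, and bounding $\nnorm{y-\bar y}\leq\kappa\nnorm{p-\bar p}\leq\kappa(\nnorm{p-\bar p}+\nnorm{x-\bar x})$ gives isolated calmness of $Y$.

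There is no real obstacle. The only point needing care is bookkeeping: a neighborhood of $(\bar p,\bar x)$ must be shrunk to a product $\PP\times\XX$ so that it can be intersected with the neighborhoods from \cref{def:semi_isolated_calmness_M1}.
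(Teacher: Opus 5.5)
Your proof is correct and is essentially the paper's argument. The paper uses the same trivial estimates for necessity and the same combination of the two estimates on a common product localization for sufficiency, arriving at the same constant $\kappa_1+\kappa_2(1+\kappa_1)$. Part (ii) is then obtained, as in your proposal, by replacing $\overline Y$ with $\bar y$.
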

\begin{proof}
	We take the obvious implications between various calmness notions for $M$ for granted.
	Comparing these with the calmness notions for $Y$ is also easy, since mappings $M$ and $Y$ share the same graph,
	and so all the calmness notions postulate demands on a localization
	\[
		(p,x,y) \in \gph M \cap (\PP\times\XX\times\YY) = \gph Y \cap (\PP\times\XX\times\YY).
	\]
	
	On the one hand, the trivial estimates
	\[
		\dist(y,\overline Y) \leq \norm{x - \bar x} + \dist(y,\overline Y),
		\quad
		\norm{p - \bar p} \leq \norm{p - \bar p} + \norm{x - \bar x}
	\]
	show that the semi-isolated calmness of $M$ \eqref{eq:semi_isolated_calmness} 
	implies calmness of $Y$, which requires
	\[
		\dist(y,\overline Y) \leq \kappa' \big(\norm{p - \bar p} + \norm{x - \bar x}\big)
	\]
	to hold for some $\kappa'>0$.

	On the other hand, calmness of $Y$, together with the partial isolated calmness of $M$,
	yields the semi-isolated calmness of $M$ with constant $\kappa' + (\kappa' + 1)\kappa$.
	Indeed, add $\norm{x - \bar x}$ to both sides of the calmness estimate for $Y$ and combine it with \eqref{eq:partial_isolated_calmness}
	(and without loss of generality assume that both properties hold in the localization above).

	Moreover, the same arguments clearly work when $\overline Y$ is replaced by $\bar y$,
	and 	this covers the second statement.
\end{proof}

Finally, we want to bring back graphical derivatives and explore how they fit with the new properties. 
The Levy--Rockafellar criterion \eqref{eq:isolated_calmness_M1} for the isolated calmness of $M$
decouples into the conditions
\begin{subequations}\label{eq:decouple_LR}
	\begin{align}
		\label{eq:critical_miltiplier}
		d^y \in DY((\bar p,\bar x),\bar y)(0,d^x)
		\quad&\Longrightarrow\quad
		d^x=0,
		\\
		\label{eq:abstract_uniqueness_condition}
		d^y \in DY((\bar p,\bar x),\bar y)(0,0)
		\quad&\Longrightarrow\quad
		d^y=0
	\end{align}
\end{subequations}
stated in terms of $Y$.
Condition \eqref{eq:abstract_uniqueness_condition} characterizes 
the isolated calmness of $Y$ at $((\bar p,\bar x),\bar y)$,
which particularly means that, in the second statement of \cref{lem:relations_calmness_x},
the partial isolated calmness of $M$ can be replaced by \eqref{eq:critical_miltiplier}.
How to handle such a replacement in the first statement is a first important result of the paper,
stated in \cref{thm:full_characterization_of_error_bound} below.
Condition \eqref{eq:critical_miltiplier} is indeed the key noncriticality criterion,
and we will now reformulate it in terms of the multiplier mapping $Y$.
\begin{definition}\label{def:criticality}
	The multiplier $\bar y\in\overline Y$ is called a \emph{noncritical} multiplier for $\bar x$ 
	if condition
	\[
		(0,d^x) \in \dom D Y((\bar p, \bar x),\bar y)
		\quad\Longrightarrow\quad
		d^x = 0
	\]
	is valid. Otherwise, we refer to $\bar y$ as \emph{critical} for $\bar x$.
\end{definition}

\subsection{Characterizing error bounds in terms of noncriticality}\label{sec:main_results}

In this subsection, we present and prove a result
which interrelates noncriticality of a single multiplier
with the relaxed calmness-type properties of $M$ from \cref{def:semi_isolated_calmness_M1}.

\begin{theorem}\label{thm:full_characterization_of_error_bound}
	Consider the following statements.
	\begin{enumerate}
		\item\label{item:noncritical} 
			The multiplier $\bar y$ is noncritical for $\bar x$.
		\item\label{item:PIC_bar_M} 
			Mapping $M$ is partially isolatedly calm w.r.t.\ $x$ at $(\bar p,(\bar x,\bar y))$.
		\item\label{item:SIC_bar_M} 
			Mapping $M$ is semi-isolatedly calm w.r.t.\ $x$ at $(\bar p,(\bar x,\bar y))$.
	\end{enumerate}
	Then we always have the implications 
	$[\ref{item:SIC_bar_M}\Longrightarrow\ref{item:PIC_bar_M}\Longrightarrow\ref{item:noncritical}]$.
	
	Consider the following calmness-type assumptions. 
	\begin{enumerate}[label=(\roman*)]
		\item\label{item:weak_inner_calmness}
			Mapping $Y$ is weakly inner calm in the fuzzy sense at $((\bar p,\bar x),\bar y)$
			in each direction $(0,d^x)$ such that $d^x\neq 0$,
		\item\label{item:calmness}
			Mapping $Y$ is calm at $((\bar p,\bar x),\bar y)$.
	\end{enumerate}
	Then we have the equivalences 
	$[\ref{item:noncritical}\land\ref{item:weak_inner_calmness} \Longleftrightarrow \ref{item:PIC_bar_M}]$
	and 
	$[\ref{item:PIC_bar_M}\land\ref{item:calmness}\Longleftrightarrow\ref{item:SIC_bar_M}]$.
	Particularly, we have the equivalence 
	$[\ref{item:noncritical}\land\ref{item:weak_inner_calmness}\land\ref{item:calmness}\Longleftrightarrow\ref{item:SIC_bar_M}]$.
\end{theorem}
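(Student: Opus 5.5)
The plan is to dispose of the easy implications first and then concentrate on the equivalence involving the weak inner calmness. The implication \ref{item:SIC_bar_M}$\Rightarrow$\ref{item:PIC_bar_M} is immediate from \eqref{eq:semi_isolated_calmness}, since dropping the nonnegative term $\dist(y,\overline Y)$ gives \eqref{eq:partial_isolated_calmness}. The equivalence $[\ref{item:PIC_bar_M}\land\ref{item:calmness}\Leftrightarrow\ref{item:SIC_bar_M}]$ is exactly \cref{lem:relations_calmness_x}(i). Once the remaining equivalence $[\ref{item:noncritical}\land\ref{item:weak_inner_calmness}\Leftrightarrow\ref{item:PIC_bar_M}]$ is established, the final equivalence follows by chaining the two. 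The implication \ref{item:PIC_bar_M}$\Rightarrow$\ref{item:noncritical} is a consequence of that equivalence, but I will also argue it directly below.

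For \ref{item:PIC_bar_M}$\Rightarrow$\ref{item:noncritical}, take $(0,d^x)\in\dom DY((\bar p,\bar x),\bar y)$ with some $d^y$. By definition of the graphical derivative there are $t_k\downarrow0$, $d^p_k\to0$, $d^x_k\to d^x$ and $d^y_k\to d^y$ with $\bar y+t_kd^y_k\in Y(\bar p+t_kd^p_k,\bar x+t_kd^x_k)$. For large $k$ these points lie in the localization, so \eqref{eq:partial_isolated_calmness} gives $t_k\nnorm{d^x_k}\le\kappa t_k\nnorm{d^p_k}$. Dividing by $t_k$ and letting $k\to\infty$ yields $d^x=0$.

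For \ref{item:PIC_bar_M}$\Rightarrow$\ref{item:weak_inner_calmness}, fix $d^x\neq0$. If no sequences as in \cref{def:weak_fuzzy_inner_calmness} exist for the direction $(0,d^x)$, the property holds trivially. Otherwise there are $t_k\downarrow0$, $(d^p_k,d^x_k)\to(0,d^x)$ and $y_k\to\bar y$ with $y_k\in Y(\bar p+t_kd^p_k,\bar x+t_kd^x_k)$. Then \eqref{eq:partial_isolated_calmness} gives $\nnorm{d^x_k}\le\kappa\nnorm{d^p_k}\to0$, which contradicts $d^x_k\to d^x\neq0$. Hence such sequences never exist, and \ref{item:weak_inner_calmness} holds vacuously.

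The substantive direction is \ref{item:noncritical}$\land$\ref{item:weak_inner_calmness}$\Rightarrow$\ref{item:PIC_bar_M}, which I argue by contradiction.

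1. If \ref{item:PIC_bar_M} fails, there are $(p_k,x_k,y_k)\in\gph M$ converging to $(\bar p,\bar x,\bar y)$ with $\nnorm{x_k-\bar x}>k\nnorm{p_k-\bar p}$. In particular $x_k\ne\bar x$.
2. Set $t_k:=\nnorm{x_k-\bar x}\downarrow0$, $d^x_k:=(x_k-\bar x)/t_k$ and $d^p_k:=(p_k-\bar p)/t_k$. Then $\nnorm{d^p_k}<1/k\to0$, and along a subsequence $d^x_k\to d^x$ with $\nnorm{d^x}=1$.
3. Thus $y_k\in Y((\bar p,\bar x)+t_k(d^p_k,d^x_k))$ with $y_k\to\bar y$, so the hypothesis of \cref{def:weak_fuzzy_inner_calmness} holds in direction $(0,d^x)\neq0$. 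Since this hypothesis is met and, by \cref{Lem:weak_fuzzy_ic}, its characterization applies, \ref{item:weak_inner_calmness} gives $(0,d^x)\in\dom DY((\bar p,\bar x),\bar y)$.
4. Noncriticality \ref{item:noncritical} then forces $d^x=0$, which contradicts $\nnorm{d^x}=1$.

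I expect the main obstacle to be this last direction, specifically the bookkeeping of directions. One has to scale by the primal distance $t_k=\nnorm{x_k-\bar x}$, not by $\nnorm{p_k-\bar p}$, so that the direction is $(0,d^x)$ with $d^x$ on the unit sphere. One also has to check that \cref{Lem:weak_fuzzy_ic} is applied to the mapping $Y$ on $\R^\ell\times\R^n$ in a nonzero direction, where its existence hypothesis is exactly what the contradicting sequence supplies. The weak inner calmness is what allows the multiplier components $y_k$, about whose rate nothing is known, to be replaced by ones converging at rate $t_k$, which produces a genuine tangent direction.
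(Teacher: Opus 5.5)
Your proof is correct and follows essentially the same route as the paper. You cite \cref{lem:relations_calmness_x}(i) for the calmness equivalence, argue (b)$\Rightarrow$(a) directly via graphical-derivative sequences, show that (i) holds vacuously under (b), and prove (a)$\land$(i)$\Rightarrow$(b) by contradiction, scaling with $t_k=\nnorm{x_k-\bar x}$ and invoking \cref{Lem:weak_fuzzy_ic}; the only cosmetic difference is that you obtain (c)$\Rightarrow$(b) by dropping the distance term instead of citing the lemma.
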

\begin{proof}
	The equivalence $[\ref{item:PIC_bar_M}\land\ref{item:calmness}\Longleftrightarrow\ref{item:SIC_bar_M}]$
	has been addressed in the first statement of \cref{lem:relations_calmness_x}.
	It remains to show $[\ref{item:noncritical}\land\ref{item:weak_inner_calmness} \Longleftrightarrow \ref{item:PIC_bar_M}]$,
	and we do it by proving three implications below.
	\\
	$[\ref{item:PIC_bar_M}\Longrightarrow\ref{item:noncritical}]$: 
	Pick $(0,d^x)\in \dom DY((\bar p,\bar x),\bar y)$.
	This implies the existence of $d^y\in\R^m$ such that $d^y\in DY((\bar p,\bar x),\bar y)(0,d^x)$.
	By definition of the graphical derivative,
	we find sequences $t_k\downarrow 0$, $p_k\to\bar p$, $x_k\to\bar x$, and $y_k\to\bar y$
	such that $((p_k,x_k),y_k)\in\gph Y$ holds for all $k\in\N$ together with the convergences
	\[
		\frac{p_k-\bar p}{t_k}\to 0,\quad
		\frac{x_k-\bar x}{t_k}\to d^x,\quad
		\frac{y_k-\bar y}{t_k}\to d^y.
	\]
	Note that $(p_k,(x_k,y_k))\in\gph M$ is true for each $k\in\N$.
	According to \ref{item:PIC_bar_M}, there is a constant $\kappa>0$ such that
	\[
		\norm{x_k-\bar x}\leq\kappa\norm{p_k-\bar p}
	\]
	is valid for all large enough $k\in\N$.
	Dividing this by $t_k$ and taking the limit $k\to\infty$ then gives $d^x=0$.
	Hence, \ref{item:noncritical} is valid.
	\\
	$[\ref{item:PIC_bar_M}\Longrightarrow\ref{item:weak_inner_calmness}]$:
	Let us fix $d^x\in\R^n$ such that $d^x\neq 0$ and suppose that there are
	sequences $t_k\downarrow 0$, $d^p_k\to 0$, $d^x_k\to d^x$,
	and $y_k\to\bar y$ such that, for each $k\in\N$, 
	the inclusion $y_k\in Y((\bar p,\bar x)+t_k(d^p_k,d^x_k))$ is valid.
	This particularly means that $(\bar p+t_kd^p_k,(\bar x+t_kd^x_k,y_k))\in\gph M$
	holds for each $k\in\N$.
	According to \ref{item:PIC_bar_M}, there is a constant $\kappa>0$ such that
	\[
		t_k\norm{d^x_k}\leq \kappa t_k\nnorm{d^p_k}
	\]
	is valid for large enough $k\in\N$.
	Dividing this by $t_k$ and taking the limit $k\to\infty$ then gives $d^x=0$,
	a contradiction. Hence, sequences as above do not exist,
	and so $Y$ is trivially weakly inner calm in the fuzzy sense at $((\bar p,\bar x),\bar y)$
	in each direction $(0,d^x)$ such that $d^x\neq 0$,
	i.e., \ref{item:weak_inner_calmness} holds.
	\\
	$[\ref{item:noncritical}\land\ref{item:weak_inner_calmness}\Longrightarrow\ref{item:PIC_bar_M}]$:
	Suppose that \ref{item:PIC_bar_M} fails to hold.
	Thus, there exist sequences $p_k\to \bar p$, $x_k\to\bar x$, and $y_k\to\bar y$
	such that the conditions
	\begin{equation*}
		(x_k,y_k)\in M(p_k),\quad
		x_k\neq \bar x,\quad
		\frac{\norm{p_k-\bar p}}{\norm{x_k - \bar x}}
		<
		\frac{1}{k}
	\end{equation*}
	hold for each $k \in \N$.
	Observe that this, particularly, implies $y_k\in Y(p_k,x_k)$ for each $k\in\N$.
	Setting $t_k := \norm{x_k - \bar x}$ for each
	$k\in\N$, we obtain, along a subsequence (without relabeling), the convergences
	\[
		d_k^p:=\frac{p_k-\bar p}{t_k}\to 0,\quad
		d_k^x:=\frac{x_k-\bar x}{t_k}\to d^x
	\]
	for some $d^x \in \R^n$ with $d^x\neq 0$
	as well as
	\[
		\dist(\bar y, Y((\bar p,\bar x) + t_k (d_k^p,d_k^x))) 
		=
		\dist(\bar y,Y(p_k,x_k))\leq \norm{y_k-\bar y} \to 0.
	\]
	Due to validity of \ref{item:weak_inner_calmness},
	\cref{Lem:weak_fuzzy_ic} yields $(0,d^x) \in \dom DY((\bar p,\bar x),\bar y)$,
	which means that $\bar y$ is critical for $\bar x$.
	That, however, is a contradiction to \ref{item:noncritical}.
\end{proof}

Let us emphasize that the simplicity of the above proof is not a shortcoming.
Once weak inner calmness in the fuzzy sense has been identified as the missing ingredient,
the connection between primal-dual error bounds and noncriticality becomes transparent,
and the actual content of \cref{thm:full_characterization_of_error_bound} lies in the
necessity of the conditions \ref{item:weak_inner_calmness} and \ref{item:calmness}:
these two properties are exactly what has to be secured, in one form or another,
by any result which ties noncriticality to error bounds.
Some first insights into their nature are presented in \cref{sec:main_results_discussion},
and \cref{sec:composite_optimization} demonstrates that they are verifiable in the
relevant settings of composite optimization,
which recovers and extends the characterizations available in the literature.

\subsection{Discussion}\label{sec:main_results_discussion}

Let us put the calmness-type conditions \ref{item:weak_inner_calmness} and \ref{item:calmness}
from \cref{thm:full_characterization_of_error_bound} into perspective.
In several important settings, characterizations of noncriticality via
semi-isolated calmness are available which do not involve any additional assumptions.
Exemplary, this is the case for standard nonlinear programs with equality constraints,
see \cite[Proposition~1]{IzmailovSolodov2012}, and, more generally,
for composite problems of type \eqref{eq:composite_problem} 
where $g$ is convex piecewise linear-quadratic,
see \cite[Theorem~3.6]{Sarabi2022} as well as
\cite[Theorem~5.1]{DoMordukhovichSarabi2021} and \cite[Theorem~4.1]{MordukhovichSarabi2018}
for particular subclasses.
At the first glance, the presence of the conditions \ref{item:weak_inner_calmness}
and \ref{item:calmness} in \cref{thm:full_characterization_of_error_bound}
may, thus, appear to be a shortcoming of the abstract approach.
This is, however, not the case.
As we will demonstrate in \cref{sec:gph_subdiff_polyhedral},
see \cref{prop:gph_of_subdifferential_polyhedral} in particular,
in the setting of composite optimization where $\gph(\partial g)$
is polyhedral, which covers all the results mentioned above,
both conditions are automatically satisfied,
so \cref{thm:full_characterization_of_error_bound} indeed characterizes
semi-isolated calmness in terms of noncriticality alone.
Even beyond polyhedrality, both conditions can be secured under mild
additional assumptions, as we will demonstrate in \cref{sec:beyond_polyhedrality}
for composite problems governed by $C^2$-decomposable functions.
While condition \ref{item:weak_inner_calmness} will be verified directly
in \cref{prop:weak_inner_calmness_decomposable},
the mechanism behind condition \ref{item:calmness} is more subtle,
and we start by uncovering it here.

Interestingly, it is typically not the mapping $Y$ itself
but its restriction to the fixed primal reference point $\bar x$,
i.e., the restricted multiplier mapping $Y_{\bar x}\colon\R^\ell \tto\R^m$ given by
\[
	Y_{\bar x}(p)
	:=
	Y(p,\bar x),
\]
whose calmness is usually assumed in the literature.
Note that $Y_{\bar x}(\bar p)=\overline Y$.
Exemplary, in \cite[Theorem~5.6]{MordukhovichSarabi2019},
semi-isolated calmness is derived from noncriticality of $\bar y$ for $\bar x$,
calmness of $Y_{\bar x}$, and an additional closedness condition
for composite problems with so-called $C^2$-cone reducible structure.
Calmness of $Y_{\bar x}$ is also among the assumptions
of the error bound results in \cite[Theorem~2]{HangSarabi2023},
which come without any closedness condition at all;
we treat these settings in detail in \cref{sec:beyond_polyhedrality}.
The restricted multiplier mapping also arises naturally from the viewpoint of polyhedrality:
fixing $x:=\bar x$ freezes, in the composite setting, 
all nonlinearity caused by the inner smooth function
in the system which describes $\gph Y$,
and whenever $\gph(\partial g)$ is polyhedral,
$\gph Y_{\bar x}$ is polyhedral as well.
Calmness of $Y_{\bar x}$ would, thus, be automatic by Robinson's classical result
on polyhedral mappings, see \cite[Proposition~1]{Robinson1981}
and \cref{rem:calmness_of_Yx} for details.
The graph of $Y$, in contrast, depends nonlinearly on $x$,
and Robinson's result is not applicable to $Y$ itself.
This raises the question of whether calmness of $Y_{\bar x}$ can take over
the role of condition \ref{item:calmness} in \cref{thm:full_characterization_of_error_bound}.
Let us mention that calmness of $Y$ at $((\bar p,\bar x),\bar y)$ always implies
calmness of $Y_{\bar x}$ at $(\bar p,\bar y)$,
which readily follows by fixing $x:=\bar x$ in the associated estimates.
The following example shows that the converse implication fails in general and that,
moreover, condition \ref{item:calmness} cannot be naively replaced by
calmness of $Y_{\bar x}$ in the general situation considered here.

\begin{example}\label{ex:calmness_Y_restricted_not_enough}
	Let us consider $\ell:=n:=m:=1$,
	the mapping $M\colon\R\tto\R\times\R$ given by
	\[
		\gph M
		:=
		\bigl(\R\times\{(0,0)\}\bigr)
		\cup
		\{(t,(t,\sqrt t))\,|\,t\geq 0\}
	\]
	whose graph is closed,
	and the reference point $(\bar p,(\bar x,\bar y)):=(0,(0,0))\in\gph M$.
	By construction, we have $Y_{\bar x}(p)=\{0\}$ for each $p\in\R$,
	so $Y_{\bar x}$ is calm at $(\bar p,\bar y)$ while $\overline Y=\{0\}$.
	Furthermore, each pair $(x,y)\in M(p)$ satisfies either $x=0$ or $x=p$,
	so $M$ is partially isolatedly calm w.r.t.\ $x$ at $(\bar p,(\bar x,\bar y))$,
	and $\bar y$ is noncritical for $\bar x$ due to the implication
	$[\ref{item:PIC_bar_M}\Longrightarrow\ref{item:noncritical}]$
	from \cref{thm:full_characterization_of_error_bound}.
	However, for each $t>0$, we have $(t,\sqrt t)\in M(t)$ and, for each fixed $\kappa>0$,
	\[
		\dist(\sqrt t,\overline Y)
		=
		\sqrt t
		>
		\kappa\,t
		=
		\kappa\norm{p-\bar p}
	\]
	for all sufficiently small $t>0$,
	so $M$ is not semi-isolatedly calm w.r.t.\ $x$ at $(\bar p,(\bar x,\bar y))$.
	Hence, in \cref{thm:full_characterization_of_error_bound},
	condition \ref{item:calmness} cannot be replaced by calmness of $Y_{\bar x}$.
	Let us note that, in accordance with \cref{thm:full_characterization_of_error_bound},
	$Y$ indeed fails to be calm at $((\bar p,\bar x),\bar y)$
	as $\dist(\sqrt t,\overline Y)=\sqrt t$ eventually exceeds
	$\kappa(\norm{p-\bar p}+\norm{x-\bar x})=2\kappa\,t$ as well.
\end{example}

The reason behind the failure in \cref{ex:calmness_Y_restricted_not_enough} is easily identified:
the multiplier $\sqrt t$, associated with $x:=t$ and the parameter $p:=t$,
is not a multiplier associated with $\bar x$ for any parameter whatsoever,
so the desirable behavior of $Y_{\bar x}$ simply carries no information
about the points of interest.
Whether or not this phenomenon can occur clearly depends on the parametrization,
i.e., on the way the parameter $p$ enters the mapping $M$.
In the composite setting, the parameters comprise so-called canonical perturbations
which are rich enough to compensate displacements of the primal variable
while keeping the multiplier fixed,
and this is precisely the feature which we now distill into abstract form.
For these purposes, let us associate with $M$ the parameter mapping
$Q_y\colon\R^n\tto\R^\ell $ given, for each $y\in\R^m$, by
\[
	Q_y(x)
	:=
	\{p\,|\,(x,y)\in M(p)\},
\]
so $y\in Y(p,x)$ holds if and only if $p\in Q_y(x)$.

\begin{definition}\label{def:parametric_calmness}
	We say that $M$ is \emph{parametrically calm w.r.t.\ $x$} at $(\bar p,(\bar x,\bar y))$
	if and only if there exist a neighborhood $\PP\subset\R^\ell $ of $\bar p$,
	neighborhoods $\XX\subset\R^n$ of $\bar x$ and $\YY\subset\R^m$ of $\bar y$,
	and a constant $\kappa>0$ such that
	\begin{equation}\label{eq:parametric_calmness}
		\forall x\in\XX,\,\forall y\in\YY\colon\quad
		Q_y(x)\cap\PP
		\subset
		Q_y(\bar x)+\kappa\norm{x-\bar x}\B.
	\end{equation}
\end{definition}

In other words, \eqref{eq:parametric_calmness} postulates that whenever $y$ is a multiplier
associated with some $x$ close to $\bar x$ for a parameter $p$ close to $\bar p$,
then $y$ is a multiplier associated with $\bar x$ itself for some parameter $p'$
which satisfies $\norm{p'-p}\leq\kappa\norm{x-\bar x}$;
the terminology reflects that this requires nothing else but calmness of the
mappings $Q_y$ at $\bar x$, holding uniformly w.r.t.\ $y\in\YY$.
Parametric calmness is a condition on the parametrization only.
In \cref{ex:calmness_Y_restricted_not_enough}, it fails since, for each $t>0$,
we have $Q_{\sqrt t}(t)=\{t\}$ while $Q_{\sqrt t}(\bar x)=\emptyset$.
The following remark provides a sufficient condition which underlines
that parametric calmness concerns the richness of the parametrization.

\begin{remark}\label{rem:ample_parametrization}
	Assume that, locally around $(\bar p,(\bar x,\bar y))$, $\gph Y$ admits a representation
	\begin{equation}\label{eq:ample_representation}
		\gph Y
		=
		\{((p,x),y)\,|\,(S(p,x,y),T(y))\in\Omega\}
	\end{equation}
	for some set $\Omega\subset\R^{s}\times\R^{t}$,
	an arbitrary mapping $T\colon\R^m\to\R^{t}$,
	and a continuously differentiable mapping
	$S\colon\R^\ell \times\R^n\times\R^m\to\R^{s}$
	such that $S'_p(\bar p,\bar x,\bar y)$ possesses full row rank $s$.
	Then $M$ is parametrically calm w.r.t.\ $x$ at $(\bar p,(\bar x,\bar y))$.
	Indeed, by the parametric Lyusternik--Graves theorem
	\cite[Theorem~5G.1]{DontchevRockafellar2014},
	the mappings $S(\cdot,\bar x,y)$ are metrically regular 
	at $(\bar p,S(\bar p,\bar x,y))$ with a constant and neighborhoods
	which do not depend on $y$ close to $\bar y$,
	so that, for each $((p,x),y)\in\gph Y$ sufficiently close to $((\bar p,\bar x),\bar y)$,
	the equation $S(p',\bar x,y)=S(p,x,y)$ possesses a solution $p'$ with
	$\norm{p'-p}\leq\kappa\norm{x-\bar x}$, where $\kappa>0$ incorporates
	a local Lipschitz constant of $S$ w.r.t.\ $(p,x)$;
	as the component $T(y)$ is not affected, this gives $p'\in Q_y(\bar x)$,
	and \eqref{eq:parametric_calmness} follows.
	Observe that it is essential that the second component mapping $T$
	in \eqref{eq:ample_representation} does not depend on $x$:
	portions of the system describing $\gph Y$ which vary in $x$
	but cannot be controlled through the parameter $p$
	are precisely what parametric calmness rules out,
	see \cref{ex:calmness_Y_restricted_not_enough} again.
	The postulated full row rank assumption on $S'_p(\bar p,\bar x,\bar y)$ mirrors
	the concept of ample parametrization known from the theory of generalized equations,
	see \cite[Section~2C]{DontchevRockafellar2014}.
\end{remark}

We are now in a position to clarify the relation between the calmness properties
of $Y$ and $Y_{\bar x}$.

\begin{lemma}\label{lem:parametric_calmness_transfer}
	Let $M$ be parametrically calm w.r.t.\ $x$ at $(\bar p,(\bar x,\bar y))$,
	and let $Y_{\bar x}$ be calm at $(\bar p,\bar y)$.
	Then $Y$ is calm at $((\bar p,\bar x),\bar y)$.
\end{lemma}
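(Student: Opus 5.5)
The plan is to chain the two hypotheses directly through the parameter mapping $Q_y$. First I fix the data of both assumptions. Parametric calmness yields neighborhoods $\PP_1$, $\XX_1$, $\YY_1$ and a constant $\kappa_1>0$ such that \eqref{eq:parametric_calmness} holds. Calmness of $Y_{\bar x}$ at $(\bar p,\bar y)$ yields neighborhoods $\PP_2$ of $\bar p$ and $\YY_2$ of $\bar y$ and a constant $\kappa_2>0$ such that $\dist(y,\overline Y)\leq\kappa_2\norm{p'-\bar p}$ for all $p'\in\PP_2$ and $y\in Y_{\bar x}(p')\cap\YY_2$. Recall here that $Y_{\bar x}(\bar p)=\overline Y$.

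Next I choose the localization for $Y$. I take $\YY:=\YY_1\cap\YY_2$. I pick $\varepsilon>0$ so that $\bar p+2\varepsilon\B\subset\PP_2$, and then set $\PP:=\PP_1\cap(\bar p+\varepsilon\B)$. Finally I take $\XX:=\XX_1\cap(\bar x+\delta\B)$ with $\delta>0$ small enough that $\kappa_1\delta\leq\varepsilon$.

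The core step is the estimate. Let $(p,x)\in\PP\times\XX$ and $y\in Y(p,x)\cap\YY$. Then $p\in Q_y(x)\cap\PP_1$, so \eqref{eq:parametric_calmness} provides $p'\in Q_y(\bar x)$ with $\norm{p'-p}\leq\kappa_1\norm{x-\bar x}$. Equivalently, $y\in Y(p',\bar x)=Y_{\bar x}(p')$. By construction,
\[
	\norm{p'-\bar p}
	\leq
	\norm{p-\bar p}+\kappa_1\norm{x-\bar x}
	\leq
	2\varepsilon,
\]
so $p'\in\PP_2$. Since also $y\in\YY_2$, calmness of $Y_{\bar x}$ gives
\[
	\dist(y,\overline Y)
	\leq
	\kappa_2\norm{p'-\bar p}
	\leq
	\kappa_2\max\{1,\kappa_1\}\bigl(\norm{p-\bar p}+\norm{x-\bar x}\bigr).
\]
This is calmness of $Y$ at $((\bar p,\bar x),\bar y)$, measured in the norm $\norm{p-\bar p}+\norm{x-\bar x}$ on $\R^\ell\times\R^n$. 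That norm is equivalent to the Euclidean one, so the constant only changes by a fixed factor.

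The only point that needs care is that the shifted parameter $p'$ must stay inside the neighborhood where the calmness of $Y_{\bar x}$ applies. The radii $\varepsilon$ and $\delta$ were chosen above exactly to guarantee this. Beyond that bookkeeping I expect no genuine difficulty.
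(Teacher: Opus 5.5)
Your proof is correct and follows the paper's argument: parametric calmness moves $p$ to some $p'\in Q_y(\bar x)$, i.e., $y\in Y_{\bar x}(p')$, with $\norm{p'-p}\leq\kappa_1\norm{x-\bar x}$, and calmness of $Y_{\bar x}$ applied at $p'$ then gives the constant $\kappa_2\max\{1,\kappa_1\}$. Your explicit choice of $\varepsilon$ and $\delta$ just makes precise the paper's ``sufficiently close'' step that keeps $p'$ inside the calmness neighborhood of $Y_{\bar x}$.
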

\begin{proof}
	Due to $Y_{\bar x}(\bar p)=\overline Y$, the assumed calmness of $Y_{\bar x}$
	yields a constant $\kappa'>0$ such that the inequality
	$\dist(y,\overline Y)\leq\kappa'\norm{p-\bar p}$ holds for all
	$p$ close to $\bar p$ and all $y\in Y_{\bar x}(p)$ close to $\bar y$.
	Now, pick $p$, $x$, and $y\in Y(p,x)$ sufficiently close to
	$\bar p$, $\bar x$, and $\bar y$, respectively.
	Due to $p\in Q_y(x)$, the imposed parametric calmness yields some
	$p'\in Q_y(\bar x)$, i.e., $y\in Y_{\bar x}(p')$,
	such that $\nnorm{p'-p}\leq\kappa\norm{x-\bar x}$,
	and $p'$ is close to $\bar p$ as
	$\nnorm{p'-\bar p}\leq\norm{p-\bar p}+\kappa\norm{x-\bar x}$.
	Consequently, we obtain
	\[
		\dist(y,\overline Y)
		\leq
		\kappa'\nnorm{p'-\bar p}
		\leq
		\kappa'\max(1,\kappa)\bigl(\norm{p-\bar p}+\norm{x-\bar x}\bigr),
	\]
	and, due to $Y(\bar p,\bar x)=\overline Y$,
	$Y$ is calm at $((\bar p,\bar x),\bar y)$.
\end{proof}

Since calmness of $Y$ always implies calmness of $Y_{\bar x}$ as mentioned above,
\cref{lem:parametric_calmness_transfer} particularly shows that both properties
are equivalent whenever $M$ is parametrically calm w.r.t.\ $x$
at $(\bar p,(\bar x,\bar y))$.
This is precisely the mechanism announced above:
in the composite setting, parametric calmness holds
without any assumptions on the function $g$,
see \cref{prop:parametric_calmness_composite} for the direct verification
with explicit constants and \cref{cor:calmness_Y_equivalence_composite}
for the resulting equivalence.
Together with Robinson's result mentioned earlier, this yields that
condition \ref{item:calmness} of \cref{thm:full_characterization_of_error_bound}
is inherently satisfied in the composite setting whenever
$\gph(\partial g)$ is polyhedral, see \cref{prop:calmness_Y_composite_polyhedral}.

Let us now address condition \ref{item:weak_inner_calmness}
of \cref{thm:full_characterization_of_error_bound} in more detail.
An interesting feature of noncriticality as introduced in \cref{def:criticality}
is that it concerns only the single multiplier $\bar y$,
being imposed via the graphical derivative of $Y$ at $((\bar p,\bar x),\bar y)$.
Since the graphical derivative is not robust w.r.t.\ perturbations of the reference point,
on the one hand,
noncriticality of $\bar y$ alone does not carry information about the remaining
multipliers in $\overline Y$, and it may indeed happen that a noncritical multiplier
is the limit of critical ones, see \cref{ex:noncritical_limit_of_critical} below.
The partial isolated calmness of $M$ w.r.t.\ $x$, on the other hand,
is robust in this respect: the estimate \eqref{eq:partial_isolated_calmness}
does not depend on the reference multiplier $\bar y$,
which enters the definition only through the neighborhood $\YY$.
Consequently, if $M$ is partially isolatedly calm w.r.t.\ $x$ at $(\bar p,(\bar x,\bar y))$,
then it enjoys this property at $(\bar p,(\bar x,y))$ for every $y\in\overline Y\cap\YY$ as well,
and the implication $[\ref{item:PIC_bar_M}\Longrightarrow\ref{item:noncritical}]$
of \cref{thm:full_characterization_of_error_bound} shows that all multipliers
from $\overline Y\cap\YY$ are then noncritical for $\bar x$.
In view of the equivalence
$[\ref{item:noncritical}\land\ref{item:weak_inner_calmness}\Longleftrightarrow\ref{item:PIC_bar_M}]$,
it is thus precisely condition \ref{item:weak_inner_calmness} which upgrades
noncriticality of the single multiplier $\bar y$ to noncriticality of all
multipliers close by, i.e., it makes noncriticality a robust property.

\begin{example}\label{ex:noncritical_limit_of_critical}
	Let us consider $\ell:=n:=m:=1$,
	the set $\overline Y:=\{0\}\cup\{1/k\,|\,k\in\N\}$,
	the mapping $M\colon\R\tto\R\times\R$ given by
	\[
		\gph M
		:=
		\{(0,(0,y))\,|\,y\in\overline Y\}
		\cup
		\{(0,(t,1/k+t))\,|\,k\in\N,\,t\in[0,1/k^2]\}
	\]
	whose graph is closed,
	and the reference point $(\bar p,(\bar x,\bar y)):=(0,(0,0))\in\gph M$.
	By construction, $\overline Y$ is indeed the set of multipliers associated
	with $\bar x$, i.e., $\overline Y=Y(\bar p,\bar x)$.
	For each fixed $k\in\N$, the curve $t\mapsto(0,(t,1/k+t))$ generates the direction
	$(0,1)\in\dom DY((\bar p,\bar x),1/k)$,
	so the multiplier $1/k$ is critical for $\bar x$.
	The multiplier $\bar y=0$ is, however, noncritical for $\bar x$.
	Indeed, let $(0,d^x)\in\dom DY((\bar p,\bar x),\bar y)$ be generated by sequences
	$s_j\downarrow 0$ and $((p_j,x_j),y_j)\in\gph Y$ with
	$((p_j,x_j),y_j)\to((\bar p,\bar x),\bar y)$, $x_j/s_j\to d^x$, and $y_j/s_j\to d^y$
	for some $d^y\in\R$, and suppose that $d^x\neq 0$.
	Since $x_j\geq 0$ holds for all $j\in\N$, we get $d^x>0$,
	so $x_j>0$ is valid for all sufficiently large $j\in\N$.
	For such $j$, we find $k_j\in\N$ with $y_j=1/k_j+x_j$ and $x_j\leq 1/k_j^2$,
	where $y_j\to 0$ enforces $k_j\to\infty$,
	and we arrive at
	\[
		\frac{y_j}{s_j}
		\geq
		\frac{1/k_j}{x_j}\cdot\frac{x_j}{s_j}
		\geq
		k_j\,\frac{x_j}{s_j}
		\to
		\infty,
	\]
	contradicting $y_j/s_j\to d^y$.
	Hence, the noncritical multiplier $\bar y$ is the limit of the critical multipliers $1/k$.
	Let us mention that, in accordance with the discussion above,
	$M$ is not partially isolatedly calm w.r.t.\ $x$ at $(\bar p,(\bar x,\bar y))$,
	as the points $(t,1/k+t)\in M(\bar p)$ with $t>0$
	violate \eqref{eq:partial_isolated_calmness},
	and, in view of \cref{thm:full_characterization_of_error_bound},
	condition \ref{item:weak_inner_calmness} must fail.
	In particular, noncriticality of $\bar y$ alone does not imply
	the partial isolated calmness of $M$ w.r.t.\ $x$,
	i.e., condition \ref{item:weak_inner_calmness} cannot be omitted
	in \cref{thm:full_characterization_of_error_bound}.
\end{example}

This also sheds some light on the results from the literature mentioned
at the beginning of this subsection: whenever condition \ref{item:weak_inner_calmness}
is satisfied, as it happens, e.g., if 
the composite setting is considered where $\gph(\partial g)$ is polyhedral or, 
under mild additional assumptions, $g$ is $C^2$-decomposable, 
see \cref{prop:gph_of_subdifferential_polyhedral,prop:weak_inner_calmness_decomposable} below,
such robustness of noncriticality is obtained for free.

\subsection{An exemplary setting from parametric optimization}\label{sec:perturbations}

As mentioned above, our prime motivation stems from the setting where
\begin{equation*}
	M(v,u)
	=
	\{(x,y)\,|\,(v,y) \in \partial \varphi(x,u)\},
	\quad
	Y(v,u,x)
	=
	\{y\,|\,(v,y) \in \partial \varphi(x,u)\}
\end{equation*}
hold
for a proper, lsc function $\varphi\colon \R^n\times\R^m \to \overline{\R}$ 
such that $\partial\varphi\colon\R^n\times\R^m\tto\R^n\times\R^m$ has a closed graph.
Thus, in the language of \cref{sec:main}, the parameter is $p:=(v,u)\in\R^\ell$ with $\ell:=n+m$,
and the reference parameter is $\bar p:=(0,0)$.

Noncriticality of a multiplier $\bar y\in\overline Y$ in the sense of \cref{def:criticality}
can be expressed as
	\begin{equation}\label{eq:noncriticality_param_opt}
		(0,d^y) \in D(\partial \varphi)((\bar x,0),(0,\bar y))(d^x,0)
		\quad\Longrightarrow\quad
		d^x = 0.
	\end{equation}
As we will detail later on in \cref{sec:composite_optimization}, 
in the setting of composite optimization,
this definition coincides with the notion of noncritical multipliers 
from \cite[Definition~4.1]{BenkoMehlitz2024},
and, thus, indeed constitutes an extension of the notion.

This setting not only justifies the terminology, but also provides additional interpretations
in terms of the abstract optimization problem
\begin{equation}\label{eq:abstract_problem_pert}\tag{P$(v,u)$}
	\min\limits_x\quad \varphi(x,u) - \innerprod{v}{x} \quad\text{s.t.}\quad x\in\R^n,
\end{equation}
involving parameters $(v,u)\in\R^n\times\R^m$.
We perceive \eqref{eq:abstract_problem_pert} as a perturbation of the target problem
\hyperref[eq:abstract_problem_pert]{(P$(0,0)$)} of minimizing $\varphi(\cdot,0)$.

Clearly, both $M$ and $Y$ encode stationarity conditions for \eqref{eq:abstract_problem_pert}.
Let us briefly comment on the relation between the stationarity condition
$(v,y) \in \partial \varphi(x,u)$, which involves the multiplier $y$,
and the multiplier-free condition $v \in \partial_x \varphi(x,u)$
featuring the partial limiting subdifferential of $\varphi$ w.r.t.\ $x$.
If $\varphi$ satisfies a suitable constraint qualification at $(x,u)$,
exemplary the condition
\[
	(0,y)\in\partial^\infty\varphi(x,u)
	\quad\Longrightarrow\quad
	y=0,
\]
then $v\in\partial_x\varphi(x,u)$ implies the existence of a multiplier vector
$y \in \R^m$ with $(v,y) \in \partial \varphi(x,u)$, i.e., $(x,y)\in M(v,u)$,
see \cite[Corollary~10.11]{RockafellarWets1998}.
The reverse implication is not automatic,
but it holds, exemplary, in the presence of so-called prox-regularity of $\varphi$
at $(x,u)$ for $(v,y)$, a common assumption in this context,
inherent in the composite setting studied in \cref{sec:composite_optimization} 
whenever the function $g$ is convex,
since then $(v,y)$ is a proximal subgradient of $\varphi$ at $(x,u)$
and the defining quadratic minorization persists when restricted to the slice $u':=u$.
Since the condition $v \in \partial_x \varphi(x,u)$ is necessary
for local optimality of $x$ in \eqref{eq:abstract_problem_pert},
these observations tie the mappings $M$ and $Y$ to minimization.
As an additional step, one can even go back from stationarity to local optimality
by invoking variational sufficiency, a concept to which we turn now.

The aforementioned pattern, stationarity plus a suitable second-order ingredient
yielding local optimality with stability, has recently been formalized by Rockafellar
through the concept of variational sufficiency, see \cite{Rockafellar2019,Rockafellar2023}.
Without going into detail, the (strong) variational sufficient condition for local optimality
in \eqref{eq:abstract_problem_pert} postulates that a quadratically tilted version of $\varphi$
is variationally (strongly) convex at the reference point, meaning that,
in a local primal-dual sense, the subgradients and function values of $\varphi$
interact as if $\varphi$ were convex.
It guarantees local optimality of stationary points without any constraint qualification,
supports augmented Lagrangian methods in nonconvex optimization, see \cite{Rockafellar2023b},
and, in standard nonlinear programming, it is equivalent to the standard strong second-order
sufficient condition, see \cite{Rockafellar2023}.
Most relevant for the above discussion, in the presence of this condition,
the mapping $M$, localized around the reference point,
comprises only pairs whose primal component is a genuine local minimizer
of \eqref{eq:abstract_problem_pert},
see \cite[Theorem~1.1]{BenkoRockafellar2026},
so that our results translate into statements about true solution mappings.
Let us also mention that the discussion in this subsection tacitly concerns
prox-regular functions:
exemplary, the strong variational sufficient condition subsumes
variational strong convexity of a quadratically tilted version of $\varphi$
and, hence, prox-regularity of $\varphi$ at the reference point,
see \cite[Theorem~1.1]{Rockafellar2025}.

Most importantly for us, strong variational sufficiency admits pointwise second-order
criteria of positive-definiteness type. Given a suitable generalized derivative
operator $\mathcal D$, these take the form
\begin{equation}\label{eq:SOSC_robust}
	(d^v,d^y) \in \mathcal D(\partial \varphi)((\bar x,0),(0,\bar y))(d^x,0), \ d^x \neq 0
	\quad\Longrightarrow\quad
	\innerprod{d^v }{d^x} > 0,
\end{equation}
where $\mathcal D$ can be chosen as the strict graphical derivative
or, equivalently, in terms of generalized Hessian bundles,
see \cite[Theorem~5.1]{Rockafellar2025},
while the limiting coderivative and the limiting graphical derivative work as well,
even though they are not employed in \cite{Rockafellar2025}.
A distinctive common feature of these derivative constructions is their robustness
w.r.t.\ perturbations of the reference point,
which the (ordinary) graphical derivative lacks.
In particular, since the graphical derivative is always contained in its strict counterpart,
the criteria \eqref{eq:SOSC_robust} are stronger than the corresponding condition
\begin{equation}\label{eq:SOSC_graph_der}
	(d^v,d^y) \in D(\partial \varphi)((\bar x,0),(0,\bar y))(d^x,0), \ d^x \neq 0
	\quad\Longrightarrow\quad
	\innerprod{d^v }{d^x} > 0,
\end{equation}
which can be equivalently expressed via second subderivatives and has been employed
as a second-order sufficient condition in composite optimization by Sarabi and coauthors,
see e.g.\ \cite{HangSarabi2023,MohammadiMordukhovichSarabi2022}.
Comparing \eqref{eq:SOSC_graph_der} with the 
noncriticality condition \eqref{eq:noncriticality_param_opt}
immediately reveals that \eqref{eq:SOSC_graph_der} implies noncriticality of $\bar y$ for $\bar x$.
The premise of the former with $d^v:=0$ would lead to the absurd conclusion
$0=\innerprod{0}{d^x}>0$.
Consequently, (strong) variational sufficiency implies noncriticality,
which constitutes a far-reaching extension of the well-known pattern from
nonlinear programming that second-order sufficient conditions rule out
critical multipliers, see \cite{IzmailovSolodov2015}.

For reference, let us record this observation in the following proposition,
which is a direct culmination of the above discussion.

\begin{proposition}\label{prop:VSS_implies_noncriticality}
	If the strong variational sufficient condition
	holds at $\bar x$ for $\bar y\in\overline Y$,
	then $\bar y$ is noncritical for $\bar x$.
\end{proposition}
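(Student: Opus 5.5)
The plan is to chain three facts that the discussion preceding the statement has already assembled. Throughout, we work in the setting of \cref{sec:perturbations}, i.e., with $M(v,u)=\{(x,y)\,|\,(v,y)\in\partial\varphi(x,u)\}$, reference parameter $\bar p=(0,0)$, and $\gph Y=\gph M$ identified with $\gph\partial\varphi$ up to a reordering of coordinates. The chain reads: strong variational sufficiency implies a robust second-order criterion \eqref{eq:SOSC_robust}; this criterion implies \eqref{eq:SOSC_graph_der}; and \eqref{eq:SOSC_graph_der} implies \eqref{eq:noncriticality_param_opt}, which is exactly noncriticality in the sense of \cref{def:criticality}.

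\emph{Step 1: from variational sufficiency to a robust criterion.} Invoke the characterization \cite[Theorem~5.1]{Rockafellar2025}. It shows that the strong variational sufficient condition at $\bar x$ for $\bar y$ yields \eqref{eq:SOSC_robust} with $\mathcal D$ chosen as the strict graphical derivative of $\partial\varphi$ at $((\bar x,0),(0,\bar y))$. This step rests entirely on the cited result. The one point needing care is that the ``quadratic tilt'' in the definition is matched correctly to the perturbation structure $(v,u)$ of \eqref{eq:abstract_problem_pert}. I expect this bookkeeping to be the only genuine obstacle; I would handle it by reading the cited theorem with the partial variable $x$ and the parameter $u$ in the roles used there.

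\emph{Step 2: pass to the ordinary graphical derivative.} The tangent cone $T_{\gph\partial\varphi}$ at the reference point is contained in the strict (paratingent) cone. Indeed, any realizing sequence with base point fixed at the reference point is admissible in the definition of the strict derivative. Hence $D(\partial\varphi)\subset\mathcal D(\partial\varphi)$ pointwise, and \eqref{eq:SOSC_robust} implies \eqref{eq:SOSC_graph_der}.

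\emph{Step 3: conclude noncriticality.} Take $(0,d^x)\in\dom DY((\bar p,\bar x),\bar y)$, which means there is some $d^y$ with $d^y\in DY((0,0,\bar x),\bar y)(0,0,d^x)$. Translating tangent vectors through the identification of $\gph Y$ with $\gph\partial\varphi$, this says $(0,d^y)\in D(\partial\varphi)((\bar x,0),(0,\bar y))(d^x,0)$. Suppose $d^x\neq0$. Then \eqref{eq:SOSC_graph_der}, applied with $d^v=0$, gives $0=\innerprod{0}{d^x}>0$, which is absurd. Therefore $d^x=0$, and $\bar y$ is noncritical for $\bar x$.
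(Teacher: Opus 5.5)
Your proposal is correct and follows the same chain as the paper's argument preceding the proposition. That chain is: the strict-derivative criterion \eqref{eq:SOSC_robust} from \cite[Theorem~5.1]{Rockafellar2025}, then the inclusion of the graphical derivative in the strict one to get \eqref{eq:SOSC_graph_der}, then the choice $d^v:=0$ forcing $d^x=0$. On the tilt bookkeeping you flagged: adding $\tfrac r2\norm{u}^2$ only shifts the derivative of the subgradient mapping by $(0,rd^u)$, which vanishes in the directions $(d^x,0)$ that matter, so this step goes through exactly as you expect.
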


These observations also suggest that extending the notion of noncriticality via
the graphical derivative, although it leads to the complete characterizations
of \cref{thm:full_characterization_of_error_bound},
might not be the only reasonable choice.
Precisely the lack of robustness discussed in \cref{sec:main_results_discussion},
see \cref{ex:noncritical_limit_of_critical},
makes it delicate to connect noncriticality with superlinear convergence
of Newton-type methods.
Alternatively, noncriticality could be introduced in terms of one of the
robust derivatives, i.e., by the implication
\[
	(0,d^y) \in \mathcal D(\partial \varphi)((\bar x,0),(0,\bar y))(d^x,0)
	\quad\Longrightarrow\quad
	d^x = 0.
\]
Exemplary, choosing $\mathcal D$ to be the limiting graphical derivative
recovers the notion of strong noncriticality
from \cite[Definition~4.6]{BenkoMehlitz2024}.
Unlike with the positive-definiteness criteria \eqref{eq:SOSC_robust}, however,
we expect the resulting notions to genuinely depend on the choice of $\mathcal D$.
Let us mention that such a robust notion of noncriticality might automatically
ensure noncriticality of all neighboring multipliers as well,
a feature closely related to the semi-strict graphical derivative,
a very interesting concept from \cite{HangSarabi2023},
and we leave this direction open for future research.

\section{Consequences in composite optimization}\label{sec:composite_optimization}

In this section, we specify the results from \cref{sec:main} for the
composite optimization problem \eqref{eq:composite_problem}.
We start to present some details on the setting as well as some basic facts in \cref{sec:basics_CP}.
\Cref{sec:gph_subdiff_polyhedral,sec:beyond_polyhedrality} then deepen our insights
for the situations where the outer lsc function $g$ admits a polyhedral subdifferential
mapping or is $C^2$-decomposable, respectively.

\subsection{Setting and basic facts}\label{sec:basics_CP}

Without further mentioning,
we assume that the following assumption is standing throughout the section.

\begin{assumption}\label{ass:composite_problem_basic}
	The functions $f_0\colon\R^n\to\R$ and $F\colon\R^n\to\R^m$ are twice continuously differentiable.
	Furthermore, $g\colon\R^m\to\overline{\R}$ is a proper, lsc function, 
	and $\gph(\partial g)$ is closed.
\end{assumption}

We note that $\gph(\partial g)$ is closed for each proper, lsc function which is continuous
on its domain. However, there exist further practically relevant discontinuous functions 
possessing this property, e.g., all subdifferentially continuous ones, 
see \cite[Definition~13.28]{RockafellarWets1998}.

To start, let us introduce a perturbed version of the model problem \eqref{eq:composite_problem}
by means of
\begin{equation}\label{eq:composite_problem_pert}\tag{CP$(v,u)$}
	\min\limits_x\quad f_0(x) + g(F(x)+u) - \innerprod{v}{x} \quad\text{s.t.}\quad x\in\R^n,
\end{equation}
involving parameters $(v,u)\in\R^n\times\R^m$,
with \eqref{eq:composite_problem} corresponding to \hyperref[eq:composite_problem_pert]{(CP$(0,0)$)}.
This precise approach of perturbing \eqref{eq:composite_problem} dates back 
to the seminal paper \cite{LevyPoliquinRockafellar2000},
and we note that, in the framework of \cref{sec:perturbations}, 
it corresponds to the investigation of \eqref{eq:abstract_problem_pert} 
for $\varphi\colon\R^n\times\R^m\to\overline{\R}$ given by
\[
	\varphi(x,u)
	:=
	f_0(x) + g(F(x)+u).
\]

Throughout the section, we investigate a reference point $((0,0),(\bar x,\bar y))\in\gph M$.
Exploiting \cite[Exercises~8.8 and~10.7]{RockafellarWets1998}, 
we easily find validity of 
$\partial\varphi(x,u)=\{(\nabla_x\LL(x,y),y)\,|\,y\in\partial g(F(x)+u)\}$,
so the stationarity condition $(v,y)\in\partial\varphi(x,u)$ is equivalent to
\begin{equation*}
	\nabla_x\LL(x,y) = v,\quad y \in \partial g(F(x) + u),
\end{equation*}
where, for brevity of notation, 
we made use of the Lagrangian function $\LL$
given in \eqref{eq:Lagrangian}.
Hence, the multiplier mapping $Y\colon\R^n\times\R^m\times\R^n\tto\R^m$ enjoys the representation
\begin{equation}\label{eq:gphY_as_preimage}
	\gph Y
	=
	\{((v,u,x),y)\,|\,(\nabla_x\LL(x,y)-v,(F(x)+u,y))\in \{0\}\times\gph(\partial g)\},
\end{equation}
so, according to \cref{ass:composite_problem_basic}, $\gph Y$ is the preimage of
a closed set under a continuous transformation and, thus, closed.
Recall that $\gph M=\gph Y$, so \eqref{eq:gphY_as_preimage}
also characterizes the mapping $M$ in the present situation.
The following result can be distilled from \cite[Lemma~3.7]{BenkoMehlitz2024}
whose proof,
which just applies the change-of-coordinates formula from \cite[Exercise~6.7]{RockafellarWets1998}
to the representation \eqref{eq:gphY_as_preimage},
does not require convexity of the function $g$ but merely the closedness of $\gph(\partial g)$
which has been assumed in \cref{ass:composite_problem_basic}.

\begin{lemma}\label{lem:graphical_derivative_Y}
	For $(d^v,d^u,d^x,d^y)\in\R^n\times\R^m\times\R^n\times\R^m$,
	the following statements are equivalent:
	\begin{enumerate}
		\item $d^y\in DY((0,0,\bar x),\bar y)(d^v,d^u,d^x)$,
		\item $\nabla^2_{xx}\LL(\bar x,\bar y)d^x+F'(\bar x)^\top d^y = d^v$,
			$d^y\in D(\partial g)(F(\bar x),\bar y)(F'(\bar x)d^x+d^u)$. 
	\end{enumerate}
\end{lemma}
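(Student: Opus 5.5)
The plan is to write $\gph Y$ as the preimage \eqref{eq:gphY_as_preimage} of the closed set $C:=\{0\}\times\gph(\partial g)$ under the $C^1$ map
\[
	\Phi(v,u,x,y):=\bigl(\nabla_x\LL(x,y)-v,\;F(x)+u,\;y\bigr),
\]
and then compute the tangent cone to $\gph Y$ at $(0,0,\bar x,\bar y)$ by a change of coordinates. The key observation is that $\Phi$ becomes a $C^1$ diffeomorphism once the variable $x$ is carried along. Define
\[
	\Psi(v,u,x,y):=\bigl(\nabla_x\LL(x,y)-v,\;F(x)+u,\;x,\;y\bigr).
\]
Its inverse is explicit: $(a,b,x,y)\mapsto(\nabla_x\LL(x,y)-a,\;b-F(x),\;x,\;y)$. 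This inverse is $C^1$ because $f_0$ and $F$ are $C^2$ by \cref{ass:composite_problem_basic}. Then $\gph Y=\Psi^{-1}(\tilde C)$ with $\tilde C:=\{0\}\times\{(b,x,y)\,|\,(b,y)\in\gph(\partial g)\}$, which is closed.

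By \cite[Exercise~6.7]{RockafellarWets1998}, the tangent cone transforms under this diffeomorphism:
\[
	T_{\gph Y}(0,0,\bar x,\bar y)=\Psi'(0,0,\bar x,\bar y)^{-1}\,T_{\tilde C}(\Psi(0,0,\bar x,\bar y)).
\]
Next I would compute $T_{\tilde C}$ at $(0,F(\bar x),\bar x,\bar y)$. Since $\tilde C$ is, up to reordering coordinates, the product $\{0\}\times\gph(\partial g)\times\R^n$, and one factor is a point and another a whole space, the product rule for tangent cones holds exactly. This gives
\[
	T_{\tilde C}=\{0\}\times\bigl\{(d^b,d^x,d^y)\,|\,(d^b,d^y)\in T_{\gph(\partial g)}(F(\bar x),\bar y)\bigr\}.
\]

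Finally I would apply the derivative $\Psi'$ to the direction $(d^v,d^u,d^x,d^y)$. The first component is $\nabla^2_{xx}\LL(\bar x,\bar y)d^x+F'(\bar x)^\top d^y-d^v$, using $\nabla_y\nabla_x\LL=F'^\top$. The second component is $F'(\bar x)d^x+d^u$, and the last two components are $d^x$ and $d^y$. Hence $(d^v,d^u,d^x,d^y)\in T_{\gph Y}$ if and only if the first component vanishes and $(F'(\bar x)d^x+d^u,d^y)\in T_{\gph(\partial g)}(F(\bar x),\bar y)$. By the definition of the graphical derivative, this is exactly statement (b).

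The only delicate point is the exactness of the tangent cone formula, which can fail for general preimages without a qualification condition. Exactness comes for free here precisely because $\Psi$ is a diffeomorphism, thanks to the trick of appending $x$. For this reason no convexity of $g$ and no constraint qualification is needed, only the closedness of $\gph(\partial g)$ from \cref{ass:composite_problem_basic}.
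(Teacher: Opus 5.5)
Your proof is correct and follows essentially the paper's route: apply the change-of-coordinates formula (Exercise~6.7 in Rockafellar--Wets) to the preimage representation \eqref{eq:gphY_as_preimage} and read off the tangent cone componentwise. Appending $x$ to get a diffeomorphism is harmless but not needed, because the Jacobian of your $\Phi$ is already surjective thanks to its $v$-, $u$-, and $y$-blocks, and surjectivity is exactly the full-rank hypothesis of that exercise.
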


According to our considerations in \cref{sec:perturbations} and \cref{lem:graphical_derivative_Y}, 
$\bar y\in\overline Y$ is a noncritical multiplier for $\bar x$ in the sense of \cref{def:criticality}
if and only if 
\[
	\nabla^2_{xx}\LL(\bar x,\bar y)d^x + F'(\bar x)^\top d^y=0,\,
		d^y \in D(\partial g)(F(\bar x),\bar y)(F'(\bar x)d^x)
	\quad\Longrightarrow\quad
	d^x = 0,
\]
and this is consistent with the notion of noncriticality introduced in
\cite[Definition~4.1]{BenkoMehlitz2024}, see \cite[Definition~3.1]{MordukhovichSarabi2018} as well.
It has been illustrated in \cite[Section~4]{BenkoMehlitz2024} that 
\cref{def:criticality} covers divers notions of noncriticality coined for more
specified problems than \eqref{eq:composite_problem} like, exemplary,
standard nonlinear optimization problems with equality and/or inequality constraints,
see \cite[Definition~2]{IzmailovSolodov2012}.

Let us also note that, due to \cref{lem:graphical_derivative_Y} again, 
condition \eqref{eq:abstract_uniqueness_condition} takes the form
\[
	F'(\bar x)^\top d^y = 0,\,d^y\in D(\partial g)(F(\bar x),\bar y)(0)
	\quad\Longrightarrow\quad
	d^y=0
\]
in the present situation.
As already mentioned before,
the latter characterizes the isolated calmness of $Y$ (of $Y_{\bar x}$) 
at $((0,0,\bar x),\bar y)$ (at $((0,0),\bar y)$) and, particularly,
implies uniqueness of the multiplier $\bar y$,
see \cite[Corollary~3.8]{BenkoMehlitz2024} and the recent study
\cite{MehlitzWachsmuth2026} for details and further information.

The upcoming lemma presents equivalent characterizations of the
calmness-type properties of the mapping $M$ from \cref{def:semi_isolated_calmness_M1}
for the situation at hand.

\begin{lemma}\label{lem:semi_isolated_calmness_as_error_bound}
	\hfill
	\begin{enumerate}
	\item
	Mapping $M$ is partially isolatedly calm w.r.t.\ $x$ at $((0,0),(\bar x,\bar y))$
	if and only if there are a neighborhood 
	$\NN\subset\R^n\times\R^m$ of $(\bar x,\bar y)$ and a constant $\kappa>0$ such that 
	\begin{equation}\label{eq:Isol_Calm_M_estimate_via_y}
		\forall (x,y)\in \NN\colon\quad
		\norm{x-\bar x}\leq\kappa\bigl(\norm{\nabla_x\LL(x,y)} + \dist(F(x),(\partial g)^{-1}(y))\bigr).
	\end{equation}
	\item
	Mapping $M$ is semi-isolatedly calm w.r.t.\ $x$ at $((0,0),(\bar x,\bar y))$
	if and only if there are a neighborhood 
	$\NN\subset\R^n\times\R^m$ of $(\bar x,\bar y)$ and a constant $\kappa>0$ such that 
	\begin{equation}\label{eq:Isol_Calm_M_estimate_via_y_special}
		\forall (x,y)\in \NN\colon\quad
		\norm{x-\bar x} + \dist(y,\overline Y)
		\leq
		\kappa\bigl(\norm{\nabla_x\LL(x,y)} + \dist(F(x),(\partial g)^{-1}(y))\bigr).
	\end{equation}
	\end{enumerate}
\end{lemma}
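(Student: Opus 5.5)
The plan is to exploit the explicit structure \eqref{eq:gphY_as_preimage} of $\gph M$. For given $(x,y)$, the set of admissible parameters $Q_y(x)=\{(v,u)\,|\,(x,y)\in M(v,u)\}$ can be described completely. Indeed, $(x,y)\in M(v,u)$ holds if and only if $v=\nabla_x\LL(x,y)$ and $F(x)+u\in(\partial g)^{-1}(y)$. Consequently,
\[
	\dist\bigl((0,0),Q_y(x)\bigr)
	=
	\inf\bigl\{\nnorm{(v,u)}\,\big|\,(v,u)\in Q_y(x)\bigr\},
\]
and this quantity is equivalent, up to constants depending only on the chosen norm on $\R^n\times\R^m$, to
\[
	\norm{\nabla_x\LL(x,y)}+\dist\bigl(F(x),(\partial g)^{-1}(y)\bigr).
\]
Since $\gph(\partial g)$ is closed, $(\partial g)^{-1}(y)$ is closed, so the infimum over $u$ is attained whenever the set is nonempty. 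If it is empty, both sides equal $\infty$ and the estimates hold trivially.

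For the direction ``$\Leftarrow$'' in both statements, take $(v,u)$ close to $(0,0)$ and $(x,y)\in M(v,u)$ close to $(\bar x,\bar y)$, so that $(x,y)\in\NN$. Then $v=\nabla_x\LL(x,y)$ and $F(x)+u\in(\partial g)^{-1}(y)$, which gives
\[
	\dist\bigl(F(x),(\partial g)^{-1}(y)\bigr)\leq\nnorm{u}.
\]
Hence the right-hand side of \eqref{eq:Isol_Calm_M_estimate_via_y} (respectively \eqref{eq:Isol_Calm_M_estimate_via_y_special}) is bounded by $\kappa(\nnorm{v}+\nnorm{u})\leq\sqrt2\,\kappa\nnorm{(v,u)}$. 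This yields \eqref{eq:partial_isolated_calmness} (respectively \eqref{eq:semi_isolated_calmness}) with constant $\sqrt2\kappa$. Recall that here $\overline Y=Y(0,0,\bar x)$.

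For the direction ``$\Rightarrow$'', take the neighborhoods $\PP,\XX,\YY$ and the constant $\kappa$ from \cref{def:semi_isolated_calmness_M1}, and fix $(x,y)\in\XX\times\YY$. Let $\rho$ denote the right-hand side bracket, and assume $\rho<\infty$. Choose $u$ with
\[
	F(x)+u\in(\partial g)^{-1}(y),\qquad \nnorm{u}=\dist\bigl(F(x),(\partial g)^{-1}(y)\bigr),
\]
and set $v:=\nabla_x\LL(x,y)$. Then $(x,y)\in M(v,u)$ and $\nnorm{(v,u)}\leq\rho$. If $(v,u)\in\PP$, the calmness estimate gives the claim with the same $\kappa$.

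The main obstacle is the remaining case, where $\rho$ is not small enough for $(v,u)\in\PP$. This case is handled by shrinking $\NN$ and using boundedness of the left-hand side. Suppose $\PP\supset\varepsilon\B$. Whenever $\rho\geq\varepsilon$, choose $\NN\subset\XX\times\YY$ bounded, say the left-hand side is at most $C$ on $\NN$; this is possible since $\dist(y,\overline Y)\leq\nnorm{y-\bar y}$ because $\bar y\in\overline Y$. Then the left-hand side is at most $C\leq(C/\varepsilon)\rho$. Taking the constant $\max(\kappa,C/\varepsilon)$ covers both cases. No continuity of $\nabla_x\LL$ beyond the definitions is needed, although one may alternatively shrink $\NN$ using continuity of $\nabla_x\LL$ so that $v$ is automatically small. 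This completes the plan for both statements simultaneously, since they differ only in the presence of the term $\dist(y,\overline Y)$.
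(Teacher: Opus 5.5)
Your proposal is correct and follows essentially the paper's argument. The ``$\Leftarrow$'' direction plugs $v=\nabla_x\LL(x,y)$ and $F(x)+u\in(\partial g)^{-1}(y)$ into the error bound, and the ``$\Rightarrow$'' direction uses a projection $u$ together with a case split on whether $(v,u)$ lies in the parameter neighborhood. The only cosmetic difference is in the large-parameter case: the paper shrinks the $x$-neighborhood so that the original constant $\kappa$ still works, whereas you bound the left-hand side on a bounded $\NN$ and enlarge the constant, which also lets you treat both statements simultaneously.
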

\begin{proof}
	We only prove the first assertion as the second one can be shown in analogous fashion.
	
	Let $M$ be partially isolatedly calm w.r.t.\ $x$ at the reference point $((0,0),(\bar x,\bar y))$.
	Then we find neighborhoods $\widetilde \PP\subset\R^n\times\R^m$ of $(0,0)$,
	$\widetilde\XX\subset\R^n$ of $\bar x$, and $\widetilde\YY\subset\R^m$ of $\bar y$
	as well as a constant
	$\kappa>0$ such that
	\[
		\forall (v,u)\in\widetilde \PP,\,
		\forall (x,y)\in M(v,u)\cap(\widetilde \XX\times\widetilde\YY)\colon\quad
		\norm{x-\bar x}\leq\kappa(\norm{u}+\norm{v}).
	\]
	Further, pick some arbitrary $\delta>0$ such that 
	$\{(v,u)\,|\,\norm{u}+\norm{v}\leq\delta\}\subset \widetilde \PP$
	holds, 
	and define $\XX:=\widetilde \XX\cap\{x\,|\,\norm{x-\bar x}\leq\delta/(\kappa^{-1}+1)\}$.
	Set $\NN:=\XX\times \widetilde \YY$ and pick $(x,y)\in \NN$.
	If $(\partial g)^{-1}(y)=\emptyset$, then \eqref{eq:Isol_Calm_M_estimate_via_y} holds
	trivially.
	Thus, let us assume that $(\partial g)^{-1}(y)\neq\emptyset$.
	Set $v:=\nabla_x\LL(x,y)$ and let $u\in\R^m$ be a point
	such that $F(x)+u\in(\partial g)^{-1}(y)$ and $\norm{u}=\dist(F(x),(\partial g)^{-1}(y))$, i.e.,
	$u$ is a projection of $0$ onto the nonempty, closed set $(\partial g)^{-1}(y)-F(x)$.
	If $\norm{u}+\norm{v}\leq\delta$, then $(v,u)\in\widetilde \PP$ and $(x,y)\in M(v,u)\cap(\widetilde\XX\times\widetilde\YY)$,
	so $\norm{x-\bar x}\leq\kappa(\norm{u}+\norm{v})$.
	Otherwise, the same estimate follows by definition of $\XX$.
	Hence, by definition of $u$ and $v$, \eqref{eq:Isol_Calm_M_estimate_via_y} holds.
	
	Let us assume that there are a neighborhood $\NN\subset\R^n\times\R^m$ of $(\bar x,\bar y)$
	and a constant $\kappa>0$ such that \eqref{eq:Isol_Calm_M_estimate_via_y} is valid.
	Choose neighborhoods $\XX\subset\R^n$ of $\bar x$ and $\YY\subset\R^m$ of $\bar y$
	such that $\XX\times\YY\subset\NN$.
	Pick $(v,u)\in\R^n\times\R^m$ and $(x,y)\in M(v,u)\cap (\XX\times\YY)$ arbitrarily.
	By definition of $M$, this yields $v=\nabla_x\LL(x,y)$ and $F(x)+u\in(\partial g)^{-1}(y)$, i.e.,
	\[
		\norm{x-\bar x}
		\leq
		\kappa\bigl(\norm{\nabla_x\LL(x,y)}+\dist(F(x),(\partial g)^{-1}(y))\bigr)
		\leq
		\kappa(\norm{v}+\norm{u}).
	\]
	Hence, $M$ is partially isolatedly calm w.r.t.\ $x$ at $((0,0),(\bar x,\bar y))$.
\end{proof}

According to \cref{lem:relations_calmness_x,lem:semi_isolated_calmness_as_error_bound},
validity of the error bound \eqref{eq:Isol_Calm_M_estimate_via_y_special} is equivalent to the validity
of the error bound \eqref{eq:Isol_Calm_M_estimate_via_y} 
(for different $\NN$ and $\kappa$, potentially)
combined with the calmness of the multiplier mapping $Y$ at $((0,0,\bar x),\bar y)$.
Recall from \cref{sec:main_results_discussion} that the latter is closely tied to
the calmness of the restricted multiplier mapping $Y_{\bar x}$ at $((0,0),\bar y)$,
on which we elaborate in the subsequently stated remark.

\begin{remark}\label{rem:calmness_of_Yx}
Due to
\begin{equation*}
	\gph Y_{\bar x}
	=
	\{
		((v,u),y)\,|\,\nabla_x \LL(\bar x,y)-v=0,\,(F(\bar x)+u,y)\in \gph(\partial g)
	\},
\end{equation*}
it is easy to check that $Y_{\bar x}$ is a polyhedral mapping, and, thus,
calm at each point of its graph,
whenever $\gph(\partial g)$ is a polyhedral set,
see \cite[Proposition~1]{Robinson1981}.
Exemplary, this happens to be the case whenever $\epi g$ is polyhedral.
Indeed, \cite[Lemma~2.6]{BenkoMehlitz2025} shows that the graph of the
limiting normal cone mapping associated with $\epi g$ is polyhedral
in this situation, and $\gph(\partial g)$ is obtained from
that graph by computing its intersection with the polyhedral set $\R^m\times\R\times\R^m\times\{-1\}$.
Hence, this observation is a first ingredient in explaining why there is no additional
calmness-type assumption stated in \cite[Proposition~1]{IzmailovSolodov2012}.
It is automatically satisfied in the setting of equality-constrained optimization
where $g=\delta_{\{0\}}$
and, thus, $\epi g=\{0\}\times\R_+$ and $\gph(\partial g)=\{0\}\times\R^m$;
a full explanation is provided in \cref{sec:gph_subdiff_polyhedral} below.
\end{remark}

Complementing \cref{rem:calmness_of_Yx},
let us now clarify the relation between the calmness properties of $Y$ and $Y_{\bar x}$
in the composite setting.
In view of \cref{lem:parametric_calmness_transfer},
the only missing piece is the parametric calmness of $M$ w.r.t.\ $x$,
which we verify next --- remarkably, without any assumptions on the function $g$ whatsoever.

\begin{proposition}\label{prop:parametric_calmness_composite}
	In the composite setting, i.e., for $M$ given by \eqref{eq:gphY_as_preimage}
	and noting that $\gph M = \gph Y$,
	mapping $M$ is parametrically calm w.r.t.\ $x$ at $((0,0),(\bar x,\bar y))$.
	Moreover, the neighborhood $\PP$ in \eqref{eq:parametric_calmness} can be chosen
	as the whole space $\R^n\times\R^m$.
\end{proposition}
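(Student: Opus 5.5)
The plan is to verify \eqref{eq:parametric_calmness} directly by writing down the compensating parameter in closed form, in the spirit of \cref{rem:ample_parametrization}. In the composite setting, $p=(v,u)\in Q_y(x)$ means precisely
\[
	\nabla_x\LL(x,y)=v,\qquad F(x)+u\in(\partial g)^{-1}(y),
\]
by \eqref{eq:gphY_as_preimage}. Given such $(v,u)$, I will define
\[
	v':=\nabla_x\LL(\bar x,y),\qquad u':=F(x)+u-F(\bar x).
\]
Then $F(\bar x)+u'=F(x)+u\in(\partial g)^{-1}(y)$ and $\nabla_x\LL(\bar x,y)=v'$, so $(v',u')\in Q_y(\bar x)$. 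The point is that $u$ enters only through $F(x)+u$, so the constraint involving $g$ is left untouched. This is why no property of $g$ is needed, and also why no restriction on $(v,u)$ arises, so that $\PP$ may be taken as all of $\R^n\times\R^m$.

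It then remains to estimate $\nnorm{(v',u')-(v,u)}$ by a multiple of $\norm{x-\bar x}$, uniformly for $x$ near $\bar x$ and $y$ near $\bar y$. For the second component, $\nnorm{u'-u}=\nnorm{F(x)-F(\bar x)}\le L_F\norm{x-\bar x}$, where $L_F$ is a Lipschitz constant of $F$ on a compact ball $\XX$ around $\bar x$; it exists since $F$ is $C^1$. For the first component,
\[
	\nnorm{v'-v}=\nnorm{\nabla f_0(\bar x)-\nabla f_0(x)+(F'(\bar x)-F'(x))^\top y},
\]
and since $f_0$ and $F$ are $C^2$ by \cref{ass:composite_problem_basic}, the maps $\nabla f_0$ and $F'$ are Lipschitz on $\XX$. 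Taking $\YY$ bounded, say $\norm{y}\le\nnorm{\bar y}+1$, we get $\nnorm{v'-v}\le(L_0+L_{F'}(\nnorm{\bar y}+1))\norm{x-\bar x}$. Summing the two bounds yields the explicit constant $\kappa$, and the argument places no condition on the size of $(v,u)$.

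The only step needing care is the uniformity in $y$, since $y$ enters $\nabla_x\LL$ bilinearly. Boundedness of $\YY$ handles this, and it is exactly why the neighborhood $\YY$ cannot be dropped while $\PP$ can.
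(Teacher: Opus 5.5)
Your proposal is correct and follows essentially the same route as the paper: the same compensating parameter $(v',u')=(\nabla_x\LL(\bar x,y),\,u+F(x)-F(\bar x))$, the same Lipschitz estimates on a bounded neighborhood $\XX$, and a bounded $\YY$ to make the constant uniform in $y$. The paper merely packages the constants as a single $\rho$ and a ball of radius $\delta$, yielding $\kappa=\rho(2+\nnorm{\bar y}+\delta)$.
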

\begin{proof}
	By \eqref{eq:gphY_as_preimage}, the parameter mapping $Q_y\colon\R^n\tto\R^n\times\R^m$
	associated with $M$ attains, for each $y\in\R^m$, the form
	\[
		Q_y(x)
		=
		\{\nabla_x\LL(x,y)\}\times\bigl((\partial g)^{-1}(y)-F(x)\bigr).
	\]
	Fix $\delta>0$ and a bounded neighborhood $\XX\subset\R^n$ of $\bar x$ such that
	$\nabla f_0$, $F$, and $F'$ are Lipschitz continuous on $\XX$
	with a common constant $\rho>0$, and let $\YY$ be the closed ball around $\bar y$
	with radius $\delta$.
	Pick $x\in\XX$, $y\in\YY$, and $(v,u)\in Q_y(x)$,
	i.e., $v=\nabla_x\LL(x,y)$ and $F(x)+u\in(\partial g)^{-1}(y)$.
	Then $v':=\nabla_x\LL(\bar x,y)$ and $u':=u+F(x)-F(\bar x)$ satisfy
	$(v',u')\in Q_y(\bar x)$ due to $F(\bar x)+u'=F(x)+u\in(\partial g)^{-1}(y)$.
	Furthermore, exploiting $\nabla_x\LL(x,y)=\nabla f_0(x)+F'(x)^\top y$, we obtain
	\[
		\norm{v'-v}
		\leq
		\norm{\nabla f_0(\bar x)-\nabla f_0(x)}+\norm{F'(\bar x)-F'(x)}\norm{y}
		\leq
		\rho\bigl(1+\norm{\bar y}+\delta\bigr)\norm{x-\bar x}
	\]
	as well as $\norm{u'-u}=\norm{F(x)-F(\bar x)}\leq\rho\norm{x-\bar x}$.
	Hence, \eqref{eq:parametric_calmness} holds with $\PP:=\R^n\times\R^m$ and
	$\kappa:=\rho(2+\norm{\bar y}+\delta)$.
\end{proof}

Let us point out that the proof above uses only the smoothness of $f_0$ and $F$
imposed in \cref{ass:composite_problem_basic}.
Alternatively, \cref{prop:parametric_calmness_composite} can be derived from
\cref{rem:ample_parametrization} since, in the representation \eqref{eq:gphY_as_preimage},
the partial Jacobian of the smooth part w.r.t.\ $(v,u)$ is even a bijection;
the direct proof above, however, provides explicit constants.
Together with \cref{lem:parametric_calmness_transfer},
\cref{prop:parametric_calmness_composite} yields the following corollary.

\begin{corollary}\label{cor:calmness_Y_equivalence_composite}
	In the composite setting, the multiplier mapping $Y$ is calm at $((0,0,\bar x),\bar y)$
	if and only if the restricted multiplier mapping $Y_{\bar x}$ is calm at $((0,0),\bar y)$.
\end{corollary}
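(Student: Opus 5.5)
The corollary follows from two earlier results, one for each direction.

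For the implication from calmness of $Y$ to calmness of $Y_{\bar x}$, I use the observation made right before \cref{ex:calmness_Y_restricted_not_enough}. It needs no structure at all. Suppose $Y$ is calm at $((0,0,\bar x),\bar y)$ with neighborhoods $\PP\times\XX$ of $((0,0),\bar x)$, $\YY$ of $\bar y$, and constant $\kappa$. Fix $x:=\bar x$ in the calmness estimate. Since $Y_{\bar x}(v,u)=Y(v,u,\bar x)$ and $Y_{\bar x}(0,0)=\overline Y=Y(0,0,\bar x)$, every $(v,u)\in\PP$ and $y\in Y_{\bar x}(v,u)\cap\YY$ satisfy
\[
	\dist(y,\overline Y)\leq\kappa\bigl(\norm{(v,u)}+\norm{\bar x-\bar x}\bigr)=\kappa\norm{(v,u)}.
\]
This is exactly calmness of $Y_{\bar x}$ at $((0,0),\bar y)$.

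For the converse, I combine \cref{prop:parametric_calmness_composite} with \cref{lem:parametric_calmness_transfer}. The proposition shows that, in the composite setting, $M$ is parametrically calm w.r.t.\ $x$ at $((0,0),(\bar x,\bar y))$ without any hypothesis on $g$ beyond \cref{ass:composite_problem_basic}. The lemma states that parametric calmness of $M$ together with calmness of $Y_{\bar x}$ at $((0,0),\bar y)$ yields calmness of $Y$ at $((0,0,\bar x),\bar y)$. Applying the lemma with $\bar p:=(0,0)$ and $\ell:=n+m$ closes the argument.

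The only step requiring any care is checking that the objects match. The abstract lemma is formulated for a general $M$ with reference point $(\bar p,(\bar x,\bar y))\in\gph M$ and local closedness of $\gph M$. In the composite setting this closedness holds because $\gph M=\gph Y$ is closed by \eqref{eq:gphY_as_preimage} and \cref{ass:composite_problem_basic}. I do not expect any genuine obstacle.
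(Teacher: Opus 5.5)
Your proposal is correct and follows the paper's proof exactly: the forward direction is obtained by fixing $x:=\bar x$ in the calmness estimate for $Y$, as noted in \cref{sec:main_results_discussion}, and the converse combines \cref{lem:parametric_calmness_transfer} with \cref{prop:parametric_calmness_composite}. Your additional remark that $\gph M=\gph Y$ is closed is harmless, although the transfer lemma does not actually use closedness.
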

\begin{proof}
	Calmness of $Y$ always implies calmness of $Y_{\bar x}$
	as pointed out in \cref{sec:main_results_discussion},
	and the converse implication follows by combining
	\cref{lem:parametric_calmness_transfer} with \cref{prop:parametric_calmness_composite}.
\end{proof}

\cref{thm:full_characterization_of_error_bound}, combined with
\cref{lem:graphical_derivative_Y,lem:semi_isolated_calmness_as_error_bound},
fully characterizes the error bounds \eqref{eq:Isol_Calm_M_estimate_via_y} and \eqref{eq:Isol_Calm_M_estimate_via_y_special} in terms of noncriticality of the
multiplier $\bar y$ for $\bar x$ and the calmness-type conditions
\ref{item:weak_inner_calmness} and \ref{item:calmness}
imposed on the multiplier mapping $Y$.
In the upcoming \cref{sec:gph_subdiff_polyhedral},
we show that these additional conditions are inherently satisfied if $\gph(\partial g)$ is a polyhedral set,
and \cref{sec:beyond_polyhedrality} secures them far beyond polyhedrality.

\subsection{Polyhedrality of the graph of the subdifferential mapping}\label{sec:gph_subdiff_polyhedral}

In this subsection, we show that the calmness-type conditions \ref{item:weak_inner_calmness}
and \ref{item:calmness} of \cref{thm:full_characterization_of_error_bound}
are inherently satisfied in the composite setting whenever $\gph(\partial g)$ is a polyhedral set.

Let us start to address condition \ref{item:weak_inner_calmness}.
As it is concerned with the completely new notion of weak inner calmness in the fuzzy sense, 
the next result and its proof are also novel.

\begin{proposition}\label{prop:gph_of_subdifferential_polyhedral}
	Let $\gph(\partial g)$ be polyhedral.
	Then the mapping $Y$ is weakly inner calm in the fuzzy sense at $((0,0,\bar x),\bar y)$
	in each direction $(d^v,d^u,d^x)\neq(0,0,0)$,
	i.e., condition \ref{item:weak_inner_calmness} 
	of \cref{thm:full_characterization_of_error_bound} holds.
\end{proposition}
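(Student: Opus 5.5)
By \cref{Lem:weak_fuzzy_ic}, the claim reduces to the following. Fix a nonzero direction $d:=(d^v,d^u,d^x)$ and sequences $t_k\downarrow 0$, $d_k:=(d^v_k,d^u_k,d^x_k)\to d$, and $y_k\to\bar y$ with $y_k\in Y((0,0,\bar x)+t_kd_k)$. It then suffices to show $d\in\dom DY((0,0,\bar x),\bar y)$. The approach is to take the given sequence and build, from it, a new sequence of graph points of $Y$ in which the multiplier component moves only proportionally to $t_k$.

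Write $x_k:=\bar x+t_kd^x_k$, $z_k:=F(x_k)+t_kd^u_k$, and $\bar z:=F(\bar x)$. The inclusion means $\nabla_x\LL(x_k,y_k)=t_kd^v_k$ and $(z_k,y_k)\in\gph(\partial g)$. Note that $z_k=\bar z+t_kw_k$ with $w_k\to w:=F'(\bar x)d^x+d^u$.

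Since $\gph(\partial g)$ is a finite union of convex polyhedra, we pass to a subsequence so that all $(z_k,y_k)$ lie in one convex polyhedron $C$. As $C$ is closed, $(\bar z,\bar y)\in C$. Consider the polyhedral multifunction
\[
	\Phi(z,q):=\{y\,|\,(z,y)\in C,\ F'(\bar x)^\top y=q\}.
\]
This linearizes the stationarity equation at $\bar x$. The vector $q_k:=F'(\bar x)^\top y_k$ satisfies
\[
	q_k=t_kd^v_k-\nabla f_0(x_k)-(F'(x_k)-F'(\bar x))^\top y_k,
\]
so $q_k-q_0=O(t_k)$ with $q_0:=-\nabla f_0(\bar x)$. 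Hence $y_k\in\Phi(\bar z+t_kw_k,q_0+t_kr_k)$ with $r_k$ bounded.

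The key step is an inner-Lipschitz (Hoffman-type) property of the convex polyhedral map $\Phi$: there is $L>0$ such that, for every $(z,q)\in\dom\Phi$ and every $y'\in\Phi(\bar z,q_0)$, we have $\dist(y',\Phi(z,q))\le L\norm{(z,q)-(\bar z,q_0)}$. This is Walkup--Wets Lipschitz continuity of convex polyhedral multifunctions on their (polyhedral) domain. Applying it with $y'=\bar y\in\Phi(\bar z,q_0)$ yields $\tilde y_k\in\Phi(z_k,q_k)$ with $\norm{\tilde y_k-\bar y}\le L't_k$. In particular $(z_k,\tilde y_k)\in\gph(\partial g)$.

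It remains to fix the nonlinear stationarity equation. Set
\[
	\tilde v_k:=\nabla_x\LL(x_k,\tilde y_k)=\nabla f_0(x_k)+F'(x_k)^\top\tilde y_k .
\]
Using $F'(\bar x)^\top\tilde y_k=q_k$ and the expression for $q_k$, we get
\[
	\tilde v_k=t_kd^v_k+(F'(x_k)-F'(\bar x))^\top(\tilde y_k-y_k).
\]
The correction term is bounded by $O(t_k)\norm{\tilde y_k-y_k}=o(t_k)$. Hence $\tilde v_k/t_k\to d^v$, while $z_k$, and thus $d^u_k$ and $d^x_k$, are unchanged. Therefore $\tilde y_k\in Y((\tilde v_k,t_kd^u_k,x_k))$ with perturbation directions converging to $d$ and $\norm{\tilde y_k-\bar y}\le L't_k$. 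Passing to a subsequence where $(\tilde y_k-\bar y)/t_k$ converges, we get $d\in\dom DY((0,0,\bar x),\bar y)$, and \cref{Lem:weak_fuzzy_ic}, or directly \cref{def:weak_fuzzy_inner_calmness}, gives the claim. Here $d\ne0$ guarantees $\nnorm{d_k}\ge\nnorm{d}/2$, which converts the bound $L't_k$ into the bound $\kappa t_k\nnorm{d_k}$.

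The main obstacle is the inner-Lipschitz step. One must make sure that the estimate holds at the fixed base point $\bar y$, not merely near some other element of $\Phi(\bar z,q_0)$. One must also make sure that $(z_k,q_k)$ stays in $\dom\Phi$, which holds since $y_k\in\Phi(z_k,q_k)$. Both follow from the global Lipschitz continuity, with respect to Hausdorff distance on the domain, of the convex polyhedral multifunction $\Phi$, and I would cite or briefly derive it via Hoffman's lemma.
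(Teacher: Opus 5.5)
Your proof is correct. Its skeleton matches the paper's. You freeze $F'(\bar x)^\top$ in the stationarity equation and apply a Hoffman-type bound, whose constant does not depend on the right-hand side, to obtain $\tilde y_k$ with $\nnorm{\tilde y_k-\bar y}=O(t_k)$ and $F'(\bar x)^\top\tilde y_k=F'(\bar x)^\top y_k$. You then repair the first component through $\tilde v_k-t_kd^v_k=(F'(x_k)-F'(\bar x))^\top(\tilde y_k-y_k)=\oo(t_k)$.

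The genuine difference is how polyhedrality of $\gph(\partial g)$ enters.

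\textbf{The paper's route.} It uses the local exactness of the tangential approximation \eqref{eq:exactness_tangential_approximation} to replace $\gph(\partial g)-(F(\bar x),\bar y)$ near the origin by the cone $\gph D(\partial g)(F(\bar x),\bar y)$. It rescales by $t_k$ using the conic structure. Via the union rule for tangents, it argues that this cone is a finite union of convex polyhedral cones. It then applies Hoffman's lemma to the rescaled system \eqref{eq:linear_system} in $\eta\approx(y-\bar y)/t_k$, and finally invokes exactness a second time to return to $\gph(\partial g)$.

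\textbf{Your route.} You skip the tangent cone entirely. You pick a convex polyhedral piece $C=\{(z,y)\,|\,Az+By\leq c\}$ of $\gph(\partial g)$ itself that contains infinitely many $(z_k,y_k)$, and you note $(\bar z,\bar y)\in C$ by closedness. You then apply Hoffman's lemma (equivalently, Walkup--Wets) to $\Phi$, anchored at $\bar y$. This step is sound for two reasons:
\begin{itemize}
\item the system for $(z_k,q_k)$ is feasible, as witnessed by $y_k$;
\item the residual of $\bar y$ in that system is at most $\nnorm{A}\nnorm{z_k-\bar z}+\nnorm{q_k-q_0}=O(t_k)$, because $A\bar z+B\bar y\leq c$ and $F'(\bar x)^\top\bar y=q_0$.
\end{itemize}

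\textbf{What each buys.} Your version is more elementary and self-contained. It needs only the finite-union structure and Hoffman's lemma, with no local exactness result and no polyhedrality of tangent cones. It also needs only Lipschitz estimates for $\nabla f_0$ and $F'$ rather than a Taylor expansion. The paper's version stays in the graphical-derivative language in which noncriticality is phrased throughout.
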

\begin{proof}
	Pick some arbitrary direction $(d^v,d^u,d^x)\neq(0,0,0)$.
	We assume that there are sequences $t_k\downarrow 0$ and
	$(d^v_k,d^u_k,d^x_k)\to(d^v,d^u,d^x)$ as well as $y_k\to \bar y$
	such that $y_k\in Y(v_k,u_k,x_k)$ for each $k\in\N$ where
	$(v_k,u_k,x_k):=(t_kd^v_k,t_kd^u_k,\bar x+t_kd^x_k)$.
	By definition, this gives $\nabla_x\LL(x_k,y_k)=v_k$ and
	$(F(x_k)+u_k,y_k)\in\gph(\partial g)$ for each $k\in\N$,
	and
	$\nabla_x\LL (\bar x,\bar y)=0$ as well as $(F(\bar x),\bar y)\in\gph(\partial g)$
	hold by definition of the multiplier set.
	A first-order Taylor expansion gives
	\begin{align*}
		v_k
		&=
		\nabla_x\LL (x_k,y_k)-\nabla_x\LL (\bar x,\bar y)
		\\
		&=
		\nabla_x\LL (\bar x,y_k)+\nabla^2_{xx}\LL (\bar x,y_k)(x_k-\bar x)+\oo(\nnorm{x_k-\bar x})-\nabla_x\LL (\bar x,\bar y)
		\\
		&=
		F'(\bar x)^\top(y_k-\bar y)+t_k\nabla^2_{xx}\LL (\bar x,y_k)d^x_k+\oo(t_k)
		\\
		&=
		F'(\bar x)^\top(y_k-\bar y)+t_k\nabla^2_{xx}\LL (\bar x,\bar y)d^x_k+\oo(t_k)
	\end{align*}
	and, thus,
	\begin{equation}\label{eq:some_bound_from_Taylor}
		F'(\bar x)^\top(y_k-\bar y)/t_k
		=
		d^v_k - \nabla^2_{xx}\LL (\bar x,\bar y)d^x_k + \oo(1).
	\end{equation}	
	Applying \cite[Proposition~8.24]{Ioffe2017}, which is possible
	since $\gph(\partial g)$ is polyhedral, we find a neighborhood $\OON\subset \R^m\times\R^m$
	of the origin such that
	\begin{equation}\label{eq:exactness_tangential_approximation}
		\bigl(\gph(\partial g)-(F(\bar x),\bar y)\bigr)\cap\OON
		=
		\gph D(\partial g)(F(\bar x),\bar y)\cap\OON.
	\end{equation}	
	Hence, the relation
	\begin{equation*}
		\bigl(F(x_k)+u_k-F(\bar x),y_k-\bar y\bigr)
		\in 
		\gph D(\partial g)(F(\bar x),\bar y)
	\end{equation*}
	is obtained for all sufficiently large $k\in\N$.
	Noting that $\gph D(\partial g)(F(\bar x),\bar y)$ is a cone, we already have
	\[
		\bigl((F(x_k)-F(\bar x))/t_k+d^u_k,(y_k-\bar y)/t_k\bigr)
		\in 
		\gph D(\partial g)(F(\bar x),\bar y)
	\]
	for all sufficiently large $k\in\N$.
	As $\gph(\partial g)$ is a polyhedral set, the same holds true 
	for $\gph D(\partial g)(F(\bar x),\bar y)$.
	Indeed, this follows from the union rule for tangents,
	see e.g.\ \cite[Table~4.1]{AubinFrankowska2009},
	and the well-known fact that tangent cones to convex polyhedral sets are convex polyhedral.
	Hence, we find a convex polyhedral set $C\subset\gph D(\partial g)(F(\bar x),\bar y)$
	such that along a subsequence (without relabeling)
	\[
		\bigl((F(x_k)-F(\bar x))/t_k+d^u_k,(y_k-\bar y)/t_k\bigr)
		\in C
	\]
	holds for all $k\in\N$.
	Clearly, there exist matrices $A$, $B$, and $c$ of appropriate
	dimensions such that $C=\{(z,w)\,|\,Az+Bw\leq c\}$, i.e.,
	\[
		A\bigl((F(x_k)-F(\bar x))/t_k+d^u_k\bigr)
		+
		B(y_k-\bar y)/t_k
		\leq c
	\]
	holds for all $k\in\N$.
	Observe that the linear system
	\begin{equation}\label{eq:linear_system}
		B\eta \leq c - A\bigl((F(x_k)-F(\bar x))/t_k+d^u_k\bigr),
		\quad
		F'(\bar x)^\top \eta = F'(\bar x)^\top(y_k-\bar y)/t_k
	\end{equation}
	possesses the solution $\eta=(y_k-\bar y)/t_k$ for each $k\in\N$.
	Hence, for each $k\in\N$,
	we can apply Hoffman's lemma, see e.g.\ \cite[Theorem~8.33]{Ioffe2017},
	to find a solution $\eta_k\in\R^m$ of \eqref{eq:linear_system}
	which additionally satisfies
	\[
		\norm{\eta_k}
		\leq
		\kappa\bigl(
			\nnorm{c - A((F(x_k)-F(\bar x))/t_k+d^u_k)}
			+
			\nnorm{F'(\bar x)^\top(y_k-\bar y)/t_k}
		\bigr)
	\]
	for some constant $\kappa>0$ which does not depend on $k\in\N$. 
	Together with \eqref{eq:some_bound_from_Taylor},
	this estimate shows that $\eta_k$ is bounded,
	say $\norm{\eta_k}\leq \Theta$ hold for all $k\in\N$ and some constant $\Theta>0$.
	Furthermore, we find 
	\[
		\bigl((F(x_k)-F(\bar x))/t_k+d^u_k,\eta_k\bigr)\in C,
	\]	
	and, consequently,
	\[
		\bigl(F(x_k)+u_k-F(\bar x),t_k\eta_k\bigr)\in\gph D(\partial g)(F(\bar x),\bar y)
	\]
	for all $k\in\N$.
	Applying \eqref{eq:exactness_tangential_approximation} once more,
	$(F(x_k)+u_k,\bar y+t_k\eta_k)\in\gph(\partial g)$, i.e.,
	$\bar y+t_k\eta_k\in\partial g(F(x_k)+u_k)$
	follows for all sufficiently large $k\in\N$.
	For each sufficiently large $k\in\N$,
	setting $\tilde y_k:=\bar y+t_k\eta_k$ and $\tilde v_k:=\nabla f_0(x_k)+F'(x_k)^\top \tilde y_k$ yields
	$\tilde y_k\in\partial g(F(x_k)+u_k)$ and $\tilde y_k\in Y(\tilde v_k,u_k,x_k)$.
	Furthermore, we have
	\begin{align*}
		(v_k-\tilde v_k)/t_k
		&=
		F'(x_k)^\top(y_k-\tilde y_k)/t_k
		=
		F'(x_k)^\top(y_k-\bar y)/t_k-F'(x_k)^\top\eta_k
		\\
		&=
		F'(\bar x)^\top(y_k-\bar y)/t_k + (F'(x_k)-F'(\bar x))^\top(y_k-\bar y)/t_k
		\\
		&\qquad
		-F'(\bar x)^\top\eta_k-(F'(x_k)-F'(\bar x))^\top\eta_k
		\\
		&=
		\frac{(F'(x_k)-F'(\bar x))^\top}{t_k}(y_k-\bar y)
		-
		(F'(x_k)-F'(\bar x))^\top\eta_k
		\to
		0
	\end{align*}
	by construction of $\eta_k$ and
	boundedness of $(F'(x_k)-F'(\bar x))^\top/t_k$ as well as $\eta_k$, 
	and this shows that $\tilde d^v_k:=\tilde v_k/t_k\to d^v$.
	Finally, we have
	\begin{align*}
		\nnorm{\tilde y_k-\bar y}
		=
		t_k\norm{\eta_k}
		&=
		\frac{\norm{\eta_k}}{\nnorm{(\tilde d^v_k,d^u_k,d^x_k)}}
			\nnorm{(\tilde v_k,u_k,x_k)-(0,0,\bar x)}
		\\
		&\leq
		\frac{2\,\Theta}{\norm{(d^v,d^u,d^x)}}t_k\nnorm{(\tilde d^v_k,d^u_k,d^x_k)}
	\end{align*}
	for all large enough $k\in\N$.
	Thus, $Y$ is weakly inner calm in the fuzzy sense at $((0,0,\bar x),\bar y)$
	in direction $(d^v,d^u,d^x)$.
\end{proof}

Next, let us address condition \ref{item:calmness},
which requires no extra effort in view of the preparations from \cref{sec:basics_CP}.

\begin{proposition}\label{prop:calmness_Y_composite_polyhedral}
	Let $\gph(\partial g)$ be polyhedral.
	Then $Y$ is calm at $((0,0,\bar x),\bar y)$,
	i.e., condition \ref{item:calmness} of \cref{thm:full_characterization_of_error_bound} holds.
\end{proposition}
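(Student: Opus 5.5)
The plan is to combine \cref{rem:calmness_of_Yx} with \cref{cor:calmness_Y_equivalence_composite}. Both are already available, so the argument amounts to verifying that their hypotheses are met.

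\emph{First step.} We show that $Y_{\bar x}$ is calm at $((0,0),\bar y)$. By \cref{rem:calmness_of_Yx}, the graph of $Y_{\bar x}$ is
\[
	\gph Y_{\bar x}
	=
	\{((v,u),y)\,|\,\nabla_x\LL(\bar x,y)-v=0,\,(F(\bar x)+u,y)\in\gph(\partial g)\}.
\]
Since $\nabla_x\LL(\bar x,y)=\nabla f_0(\bar x)+F'(\bar x)^\top y$ is affine in $y$, the first condition describes an affine subspace of $(v,u,y)$-space. The second condition is the preimage of the polyhedral set $\gph(\partial g)$ under the affine map $(v,u,y)\mapsto(F(\bar x)+u,y)$.

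Preimages of finite unions of convex polyhedra under affine maps are again finite unions of convex polyhedra. Intersecting with an affine subspace preserves this structure as well. Hence $\gph Y_{\bar x}$ is polyhedral, and Robinson's result \cite[Proposition~1]{Robinson1981} gives that $Y_{\bar x}$ is calm at each point of its graph. Since $\overline Y=Y_{\bar x}(0,0)$ contains $\bar y$, the point $((0,0),\bar y)$ lies in the graph, so calmness holds there in particular.

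\emph{Second step.} By \cref{cor:calmness_Y_equivalence_composite}, which rests on the parametric calmness from \cref{prop:parametric_calmness_composite} and on \cref{lem:parametric_calmness_transfer}, calmness of $Y_{\bar x}$ at $((0,0),\bar y)$ is equivalent to calmness of $Y$ at $((0,0,\bar x),\bar y)$. This yields condition \ref{item:calmness} of \cref{thm:full_characterization_of_error_bound}.

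No step here is a real obstacle. The only point needing care is that Robinson's theorem gives upper Lipschitz continuity in the form $Y_{\bar x}(p)\subset Y_{\bar x}(0,0)+\kappa\norm{p}\B$ for all $p$ near $0$. This is stronger than the localized calmness required at $((0,0),\bar y)$, so it suffices.
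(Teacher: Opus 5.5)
Your proposal is correct and follows the paper's proof exactly: calmness of $Y_{\bar x}$ at $((0,0),\bar y)$ via polyhedrality of $\gph Y_{\bar x}$ and Robinson's result (\cref{rem:calmness_of_Yx}), then transfer to $Y$ through \cref{cor:calmness_Y_equivalence_composite}. Your explicit check that affine preimages and intersections with affine subspaces preserve polyhedrality only spells out what the remark calls ``easy to check''.
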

\begin{proof}
	By \cref{rem:calmness_of_Yx}, $Y_{\bar x}$ is calm at $((0,0),\bar y)$,
	so the claim follows from \cref{cor:calmness_Y_equivalence_composite}.
\end{proof}

Let us now formulate a result which combines our findings so far;
it can be viewed as the composite counterpart of \cref{thm:full_characterization_of_error_bound}
for the special situation considered in this subsection.

\begin{theorem}\label{thm:gph_of_subdifferential_polyhedral}
	Let $\gph(\partial g)$ be polyhedral.
	Then the following statements are equivalent.
	\begin{enumerate}
		\item The multiplier $\bar y$ is noncritical for $\bar x$.
		\item There are a neighborhood $\NN\subset\R^n\times\R^m$ of $(\bar x,\bar y)$
			and a constant $\kappa>0$ such that \eqref{eq:Isol_Calm_M_estimate_via_y}
			holds for all $(x,y)\in \NN$.
		\item There are a neighborhood $\NN\subset\R^n\times\R^m$ of $(\bar x,\bar y)$
			and a constant $\kappa>0$ such that \eqref{eq:Isol_Calm_M_estimate_via_y_special}
			holds for all $(x,y)\in \NN$.
	\end{enumerate}
\end{theorem}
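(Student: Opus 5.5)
The plan is to combine the abstract characterization from \cref{thm:full_characterization_of_error_bound} with the translation results of \cref{sec:basics_CP}. There the parameter is $p:=(v,u)$, the reference parameter is $\bar p:=(0,0)$, and $M$, $Y$ are given via \eqref{eq:gphY_as_preimage}. This graph is closed by \cref{ass:composite_problem_basic}, so the standing closedness assumption of \cref{sec:main} holds. By \cref{lem:semi_isolated_calmness_as_error_bound}, statement (b) is equivalent to partial isolated calmness of $M$ w.r.t.\ $x$ at $((0,0),(\bar x,\bar y))$, i.e., item \ref{item:PIC_bar_M}. Likewise, statement (c) is equivalent to semi-isolated calmness, i.e., item \ref{item:SIC_bar_M}. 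Statement (a) is literally item \ref{item:noncritical} of \cref{def:criticality}; its explicit form comes from \cref{lem:graphical_derivative_Y}, but we do not need it.

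Next, I verify the two calmness-type assumptions of \cref{thm:full_characterization_of_error_bound} under polyhedrality of $\gph(\partial g)$. For condition \ref{item:weak_inner_calmness}, I would invoke \cref{prop:gph_of_subdifferential_polyhedral}. It yields weak inner calmness in the fuzzy sense of $Y$ at $((0,0,\bar x),\bar y)$ in every nonzero direction $(d^v,d^u,d^x)$. In particular this covers all directions of the form $(0,0,d^x)$ with $d^x\neq 0$, which is exactly what is required. For condition \ref{item:calmness}, \cref{prop:calmness_Y_composite_polyhedral} provides calmness of $Y$ at $((0,0,\bar x),\bar y)$.

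With both conditions available, the equivalences follow from \cref{thm:full_characterization_of_error_bound}. The equivalence $[\ref{item:noncritical}\land\ref{item:weak_inner_calmness}\Longleftrightarrow\ref{item:PIC_bar_M}]$ reduces to (a)$\Leftrightarrow$(b), since \ref{item:weak_inner_calmness} holds automatically. The equivalence $[\ref{item:PIC_bar_M}\land\ref{item:calmness}\Longleftrightarrow\ref{item:SIC_bar_M}]$ reduces to (b)$\Leftrightarrow$(c), since \ref{item:calmness} holds automatically. Translating back through \cref{lem:semi_isolated_calmness_as_error_bound} gives the claim.

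There is no real obstacle left; the substantive work sits in \cref{prop:gph_of_subdifferential_polyhedral}. The only point requiring care is bookkeeping. One must confirm that the abstract direction $(0,d^x)$ in \cref{thm:full_characterization_of_error_bound} corresponds to $(0,0,d^x)$ in the composite parametrization. One must also confirm that the reference point used in \cref{lem:semi_isolated_calmness_as_error_bound} matches the one used in the propositions.
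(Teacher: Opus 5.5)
Your proposal is correct and follows the paper's proof exactly. Like the paper, you translate (b) and (c) into partial and semi-isolated calmness via \cref{lem:semi_isolated_calmness_as_error_bound}, verify conditions \ref{item:weak_inner_calmness} and \ref{item:calmness} by \cref{prop:gph_of_subdifferential_polyhedral,prop:calmness_Y_composite_polyhedral}, and conclude with \cref{thm:full_characterization_of_error_bound}.
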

\begin{proof}
	Due to \cref{lem:semi_isolated_calmness_as_error_bound},
	the second and the third statement correspond to the partial isolated calmness
	and the semi-isolated calmness of $M$ w.r.t.\ $x$ at $((0,0),(\bar x,\bar y))$,
	respectively.
	The claim, thus, follows from \cref{thm:full_characterization_of_error_bound}
	since its conditions \ref{item:weak_inner_calmness} and \ref{item:calmness}
	are valid by 
	\cref{prop:gph_of_subdifferential_polyhedral,prop:calmness_Y_composite_polyhedral},
	respectively.
\end{proof}

In \cite{Sarabi2022}, the author considers the particular situation where the
function $g$ is convex piecewise linear-quadratic.
This means that $g$ is a convex function whose domain can be decomposed into
finitely many convex polyhedral sets such that $g$ is a quadratic function
on each of these sets. Exemplary, observe that each convex piecewise linear function,
i.e., each function whose epigraph is convex polyhedral, is
convex piecewise linear-quadratic.
It follows from \cite[Proposition~2.1(a)]{Sarabi2022} that each convex piecewise
linear-quadratic function comes along with a polyhedral limiting subdifferential mapping.
Hence, \cref{thm:gph_of_subdifferential_polyhedral} 
applies to this situation.

Let us assume that $g$ is convex piecewise linear-quadratic.
In \cite[Theorem~3.6]{Sarabi2022}, the author shows that
the following statements are equivalent.
\begin{enumerate}
	\item\label{item:MorSar_noncritical} 
		The multiplier $\bar y$ is noncritical for $\bar x$.
	\item\label{item:MorSar_semi_isolated_calmness_M1}
		Mapping $M$ is semi-isolatedly calm w.r.t.\ $x$ at $((0,0),(\bar x,\bar y))$.
\end{enumerate}
For a certain subclass of convex piecewise linear-quadratic functions,
this result can also be found in \cite[Theorem~5.1]{DoMordukhovichSarabi2021}.
Furthermore, the same characterization is given in \cite[Theorem~4.1]{MordukhovichSarabi2018}
for the even smaller class of composite problems where $\epi g$ is convex polyhedral.
All these findings generalize the classical characterization of
noncritical multipliers in smooth equality-constrained optimization
from \cite[Proposition~1]{IzmailovSolodov2012}.
Consulting \cref{lem:semi_isolated_calmness_as_error_bound} yields that
condition~\ref{item:MorSar_semi_isolated_calmness_M1} from above
is equivalent to the existence of a neighborhood $\NN\subset\R^n\times\R^m$
of $(\bar x,\bar y)$ and some constant $\kappa>0$ such that
\eqref{eq:Isol_Calm_M_estimate_via_y_special} holds for all $(x,y)\in \NN$.
Thus, \cite[Theorem~3.6]{Sarabi2022} matches \cref{thm:gph_of_subdifferential_polyhedral}
for convex piecewise linear-quadratic functions $g$. 

Let us close this subsection by mentioning some practically relevant
examples of nonconvex functions $g$
such that $\gph(\partial g)$ is polyhedral.
If $g$ is the indicator function of a nonconvex polyhedral set,
then \eqref{eq:composite_problem} represents so-called disjunctive optimization problems, 
see \cite{BenkoCervinkaHoheisel2019,FlegelKanzowOutrata2007,Mehlitz2019a} for an introduction,			
like mathematical programs with so-called
complementarity, switching, or vanishing constraints.
As mentioned in \cref{rem:calmness_of_Yx}, $\partial g$ is a polyhedral mapping in this case.
Another popular setting is recovered 
whenever the role of $g$ is played by the $\ell_0$-quasi-norm given by
$g(z):=|\{i\in\{1,\ldots,m\}\,|\,z_i\neq 0\}|$ for all $z\in\R^m$.
This function is popular in sparse optimization since its slope
at vectors possessing a vanishing component is infinite.
Besides being proper and lsc, 
$g$ lacks most of the desirable variational properties 
like convexity and continuity.
However, simple calculations yield
$\partial g(z)=\{y\,|\,\forall i\in I^\pm(z)\colon\,y_i=0\}$
for each $z\in\R^m$, where $I^\pm(z):=\{i\in\{1,\ldots,m\}\,|\,z_i\neq 0\}$,
and it is not hard to see from this formula that
$\partial g$ is a polyhedral mapping in this situation. 
Hence, \cref{thm:gph_of_subdifferential_polyhedral} applies.

\subsection{Going beyond polyhedrality}\label{sec:beyond_polyhedrality}

Without polyhedrality of $\gph(\partial g)$, the verification of the conditions
\ref{item:weak_inner_calmness} and \ref{item:calmness} of
\cref{thm:full_characterization_of_error_bound} becomes much more challenging.
In this subsection, we address this issue for the setting where
$g$ is $C^2$-decomposable,
see \cite{Shapiro2003}, where this class has been introduced,
as well as \cite{HangSarabi2023,Milzarek2016}.
The class is rich.
On the one hand, it comprises the indicator functions of $C^2$-cone reducible sets
in the sense of \cite[Definition~3.135]{BonnansShapiro2000},
which form the setting of \cite{MordukhovichSarabi2019} and cover
prominent examples like polyhedral sets, the second-order cone,
and the cone of all positive semidefinite matrices.
On the other hand, the extension is a genuine one, as
$C^2$-decomposability does not presuppose any constraint structure:
polyhedral convex functions such as the $\ell_1$-norm
as well as important spectral functions
like the maximum eigenvalue of a symmetric matrix
belong to the class, too,
see \cite[Section~5.3.3]{Milzarek2016} and \cite[Examples~2.1--2.4]{Shapiro2003}.

The study of noncriticality in nonpolyhedral settings was pioneered in \cite{MordukhovichSarabi2019}.
In \cite[Theorem~5.6]{MordukhovichSarabi2019}, it is shown that, for $g:=\delta_D$
with a set $D$ which is $C^2$-cone reducible at $F(\bar x)$, noncriticality
of $\bar y$ for $\bar x$ is equivalent to the semi-isolated calmness of
$M$ w.r.t.\ $x$ at $((0,0),(\bar x,\bar y))$ provided that
$Y_{\bar x}$ is calm at $((0,0),\bar y)$ and that a certain closedness
condition is satisfied.
Related error bound results for $C^2$-decomposable composite problems,
obtained under a second-order sufficient condition and, again, calmness of
$Y_{\bar x}$, can be found in \cite[Theorem~2]{HangSarabi2023}.
Notably, no closedness condition is needed there.
This is achieved by employing the semi-strict graphical derivative,
a novel and very interesting concept introduced in \cite{HangSarabi2023},
and clearly shows that the route pursued below is not the only viable one.
The main result of this subsection, \cref{thm:decomposable_characterization},
provides a complete characterization the error
bounds \eqref{eq:Isol_Calm_M_estimate_via_y} and \eqref{eq:Isol_Calm_M_estimate_via_y_special}
in terms of noncriticality
for the entire class of $C^2$-decomposable functions, under two
assumptions in the spirit of \cite{MordukhovichSarabi2019},
namely calmness of $Y_{\bar x}$ and a closedness condition.
The route goes through \cref{thm:full_characterization_of_error_bound}.
The calmness of $Y_{\bar x}$ settles condition \ref{item:calmness} via
\cref{cor:calmness_Y_equivalence_composite}, and the crucial step,
\cref{prop:weak_inner_calmness_decomposable}, verifies condition
\ref{item:weak_inner_calmness}.
Recall that both conditions are also necessary for semi-isolated
calmness, so this route is, in a sense, the canonical one.
In particular, we fully recover the characterization from
\cite[Theorem~5.6]{MordukhovichSarabi2019}, enriched by the counterpart in
terms of the error bound \eqref{eq:Isol_Calm_M_estimate_via_y},
as the special case treated in \cref{sec:C2_cone_reducible}.

Let us emphasize that our proof strategy essentially follows the one of
\cite[Theorem~5.6]{MordukhovichSarabi2019}, including the following elegant
trick: while, in polyhedral settings, one measures the deviation of a
multiplier sequence directly from the reference multiplier $\bar y$ and
controls it via Hoffman's lemma, see our proof of
\cref{prop:gph_of_subdifferential_polyhedral}, the deviation is now measured
from a moving projection onto $\overline Y$, made available at the
correct rate precisely by the imposed calmness of $Y_{\bar x}$, and this
bypasses the absence of polyhedrality.
We merely extend these arguments from indicator functions of $C^2$-cone
reducible sets to $C^2$-decomposable functions, for which the required
second-order toolkit has been provided in \cite{HangSarabi2023}.
Our approach, however, also brings a simplification:
the calmness of $Y_{\bar x}$ is exploited directly as an error bound in the
original problem data, which renders the propagation results
\cite[Lemmas~5.3, 5.4, and~5.5]{MordukhovichSarabi2019} between the original
and the reduced system superfluous,
and the same applies to \cite[Lemma~5.2]{MordukhovichSarabi2019}
in view of \cref{rem:closedness_representation_free}.

\subsubsection{$C^2$-decomposable functions}\label{sec:decomposable}

Following \cite{Shapiro2003}, see also \cite[Section~3]{HangSarabi2023},
we say that function $g$ is $C^2$-decomposable at $\bar z:=F(\bar x)$ if there are an
open neighborhood $\VV\subset\R^m$ of $\bar z$, 
a proper, lsc, sublinear function $\theta\colon\R^p\to\overline\R$,
and a twice continuously differentiable mapping $\Xi\colon\R^m\to\R^p$
such that $\Xi(\bar z)=0$ and
\begin{equation}\label{eq:decomposition}
	\forall z\in\VV\colon\quad
	g(z) = g(\bar z) + \theta(\Xi(z)).
\end{equation}
Recall that sublinearity means convexity together with positive homogeneity,
see \cite[Definition~3.18]{RockafellarWets1998}, and note that it entails
$\theta(0)=0$ since $\theta$ is proper and lsc,
see \cite[Exercise~3.19]{RockafellarWets1998}.
By \cite[Theorem~8.24]{RockafellarWets1998}, $\theta$ is precisely the support
function of 
$\partial\theta(0)=\{\mu\,|\,\forall q\in\R^p\colon\,\innerprod{\mu}{q}\leq\theta(q)\}$
which is a nonempty, closed, convex set.
Moreover, for each $q\in\dom\theta$, the set $\partial\theta(q)$ consists precisely
of the maximizers of $\innerprod{\cdot}{q}$ over $\partial\theta(0)$,
see \cite[Corollary~8.25]{RockafellarWets1998},
and, as the associated maximum equals $\theta(q)$, this reads
\begin{equation}\label{eq:subdiff_sublinear}
	\forall q\in\R^p\colon\quad
	\partial\theta(q)
	=
	\{\mu\in\partial\theta(0)\,|\,\innerprod{\mu}{q}=\theta(q)\},
\end{equation}
where both sides are empty whenever $q\notin\dom\theta$.
Denoting by $\HH\subset\R^p$ the linear subspace parallel to the affine
hull of $\partial\theta(0)$, we assume, in addition, that the
nondegeneracy condition
\begin{equation}\label{eq:nondegeneracy}
	\HH\cap\ker\Xi'(\bar z)^\top=\{0\}
\end{equation}
is satisfied, see \cite[equation (32)]{HangSarabi2023}.
Let us point out that \eqref{eq:nondegeneracy} is clearly implied by
surjectivity of $\Xi'(\bar z)$, which annihilates $\ker\Xi'(\bar z)^\top$,
but is considerably weaker in general.
Throughout this subsection, we take the $C^2$-decomposability of $g$ at
$\bar z$, with a fixed representation \eqref{eq:decomposition}, together with
the nondegeneracy condition \eqref{eq:nondegeneracy} as standing assumptions.

The following lemma clarifies that, in this setting, $\partial g$ is fully
described by well-behaved reduced multipliers.

\begin{lemma}\label{lem:decomposable_multipliers}
	There are an open neighborhood $\WW\subset\VV$ of $\bar z$
	and a constant $c>0$ such that the following statements hold.
	\begin{enumerate}
		\item\label{item:dec_injectivity}
			For all $z\in\WW$ and all $\nu\in\HH$,
			we have $\nnorm{\Xi'(z)^\top\nu}\geq c\norm{\nu}$.
		\item\label{item:dec_chain_rule}
			For each $z\in\WW$, we have
			$\partial g(z)=\Xi'(z)^\top\partial\theta(\Xi(z))$,
			and for each $y\in\partial g(z)$,
			there is a unique multiplier $\mu(z,y)\in\partial\theta(\Xi(z))$
			satisfying $y=\Xi'(z)^\top\mu(z,y)$.
		\item\label{item:dec_multiplier_convergence}
			Whenever sequences $z_k\to\bar z$ and $y_k\to\hat y$ satisfy
			$z_k\in\WW$ and $y_k\in\partial g(z_k)$ for all $k\in\N$,
			we have $\hat y\in\partial g(\bar z)$ and
			$\mu(z_k,y_k)\to\mu(\bar z,\hat y)$.
	\end{enumerate}
\end{lemma}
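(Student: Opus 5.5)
We prove the three assertions in order, with \ref{item:dec_injectivity} driving the rest.

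For \ref{item:dec_injectivity}, we argue by compactness on the unit sphere of $\HH$. The nondegeneracy condition \eqref{eq:nondegeneracy} says that $\nu\mapsto\Xi'(\bar z)^\top\nu$ is injective on $\HH$. Since the unit sphere $\{\nu\in\HH\,|\,\norm{\nu}=1\}$ is compact, there is $2c>0$ with $\nnorm{\Xi'(\bar z)^\top\nu}\geq 2c\norm{\nu}$ for all $\nu\in\HH$. Because $\Xi'$ is continuous, we can choose an open $\WW\subset\VV$ around $\bar z$ with $\nnorm{\Xi'(z)-\Xi'(\bar z)}\leq c$ on $\WW$. The triangle inequality then gives the claim.

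For \ref{item:dec_chain_rule}, we use that \eqref{eq:decomposition} gives $g=g(\bar z)+\theta\circ\Xi$ on the open set $\VV$, and the limiting subdifferential is local. So we apply the chain rule \cite[Theorem~10.6]{RockafellarWets1998} to $\theta\circ\Xi$ at $z\in\WW$. Its qualification condition requires $\partial^\infty\theta(\Xi(z))\cap\ker\Xi'(z)^\top=\{0\}$. Since $\theta$ is convex, $\partial^\infty\theta(q)=N_{\dom\theta}(q)$. For the sublinear $\theta$, which is the support function of $\Sigma:=\partial\theta(0)$, this cone is contained in the recession cone of $\Sigma$. I would then need that this normal cone lies in $\HH$. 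This is the delicate point, since for unbounded $\Sigma$ the recession directions need not lie in $\HH$. I would try to show $N_{\dom\theta}(q)\subset\Sigma^\infty\subset\HH$; the last inclusion holds because recession directions of a closed convex set lie in the subspace parallel to its affine hull. Combined with \ref{item:dec_injectivity}, this yields the qualification condition. Moreover, $\theta\circ\Xi$ is then amenable, hence regular, so the chain rule holds with equality: $\partial g(z)=\Xi'(z)^\top\partial\theta(\Xi(z))$. Uniqueness of $\mu(z,y)$ follows because two candidates $\mu_1,\mu_2\in\partial\theta(\Xi(z))\subset\Sigma$ differ by an element of $\HH$. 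Then $\Xi'(z)^\top(\mu_1-\mu_2)=0$ together with \ref{item:dec_injectivity} forces $\mu_1=\mu_2$.

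For \ref{item:dec_multiplier_convergence}, set $\mu_k:=\mu(z_k,y_k)$ and fix some $\mu_0\in\Sigma$. Then $\mu_k-\mu_0\in\HH$, and \ref{item:dec_injectivity} gives
\[
c\nnorm{\mu_k-\mu_0}\leq\nnorm{\Xi'(z_k)^\top(\mu_k-\mu_0)}\leq\nnorm{y_k}+\nnorm{\Xi'(z_k)^\top\mu_0}.
\]
Hence $(\mu_k)$ is bounded. Take a convergent subsequence, $\mu_k\to\hat\mu$. The set $\gph(\partial\theta)$ is closed because $\theta$ is convex and lsc, and $\Xi(z_k)\to\Xi(\bar z)=0$, so $\hat\mu\in\partial\theta(0)$. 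Passing to the limit in $y_k=\Xi'(z_k)^\top\mu_k$ gives $\hat y=\Xi'(\bar z)^\top\hat\mu$, so $\hat y\in\partial g(\bar z)$ by \ref{item:dec_chain_rule}. By uniqueness, $\hat\mu=\mu(\bar z,\hat y)$. Since every subsequence has a further subsequence with this same limit, the whole sequence converges.

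The main obstacle is the qualification condition in \ref{item:dec_chain_rule}, i.e., controlling $\partial^\infty\theta$ by $\HH$ when $\partial\theta(0)$ is unbounded. If that fails, I would fall back on the convention of \cite{HangSarabi2023}, which derives this chain rule from \eqref{eq:nondegeneracy}, and cite it.
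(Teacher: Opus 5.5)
Your proposal is correct and follows the paper's proof. Part (a) is identical. In (b), the qualification condition via $\partial^\infty\theta(q)=N_{\dom\theta}(q)\subset\HH$ and the uniqueness argument match the paper, and the inclusion you hesitate about does hold, so the fallback citation is unnecessary: $N_{\dom\theta}(q)\subset(\dom\theta)^\circ$ because $\dom\theta$ is a cone containing $q$, and for $\mu\in\Sigma$ and $\xi\in(\dom\theta)^\circ$ one has $\innerprod{\mu+\xi}{q'}\leq\theta(q')$ for all $q'$, so $\mu+\xi\in\Sigma$ and $\xi\in\Sigma-\Sigma\subset\HH$ (recession directions never leave $\HH$, contrary to your worry). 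In (c) you route differently but validly: you obtain $\hat y\in\partial g(\bar z)$ from boundedness of $\mu(z_k,y_k)$, a convergent subsequence and the chain rule at $\bar z$, whereas the paper invokes the closedness of $\gph(\partial g)$ and then the direct estimate $c\nnorm{\mu(z_k,y_k)-\hat\mu}\leq\nnorm{y_k-\Xi'(z_k)^\top\hat\mu}\to0$.
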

\begin{proof}
	As \eqref{eq:nondegeneracy} states that the linear operator
	$\nu\mapsto\Xi'(\bar z)^\top\nu$ is injective on the subspace $\HH$,
	there is a constant $c>0$ satisfying
	$\nnorm{\Xi'(\bar z)^\top\nu}\geq 2c\norm{\nu}$ for all $\nu\in\HH$,
	and the continuity of $\Xi'$ yields an open neighborhood
	$\WW\subset\VV$ of $\bar z$ such that
	item~\ref{item:dec_injectivity} holds.

	To verify item~\ref{item:dec_chain_rule}, we fix $z\in\WW$.
	As $\theta$ is convex,
	the chain rules from \cite[Theorem~10.6]{RockafellarWets1998} yield
	$\partial g(z)=\Xi'(z)^\top\partial\theta(\Xi(z))$ provided that
	the qualification condition
	$\partial^\infty\theta(\Xi(z))\cap\ker\Xi'(z)^\top=\{0\}$ holds,
	and the latter is implied by item~\ref{item:dec_injectivity}:
	for each $q\in\dom\theta$, we have
	$\partial^\infty\theta(q)
	=N_{\dom\theta}(q)\subset N_{\dom\theta}(0)\subset\HH$,
	where we used \cite[Proposition~8.12]{RockafellarWets1998},
	that $\dom\theta$ is a cone,
	and that $\mu+\xi\in\partial\theta(0)$ holds for all
	$\mu\in\partial\theta(0)$ and $\xi\in N_{\dom\theta}(0)$.
	Finally, any two multipliers associated with the same $y\in\partial g(z)$
	differ by an element of $\HH\cap\ker\Xi'(z)^\top$,
	see \eqref{eq:subdiff_sublinear},
	so validity of \eqref{eq:nondegeneracy} yields the claimed uniqueness.

	As for item~\ref{item:dec_multiplier_convergence}, first note that
	$\hat y\in\partial g(\bar z)$ follows from the closedness of
	$\gph(\partial g)$ postulated in \cref{ass:composite_problem_basic},
	so $\hat\mu:=\mu(\bar z,\hat y)$ is available.
	Due to $\mu(z_k,y_k),\hat\mu\in\partial\theta(0)$
	by \eqref{eq:subdiff_sublinear}, item~\ref{item:dec_injectivity} yields
	\[
		c\norm{\mu(z_k,y_k)-\hat\mu}
		\leq
		\nnorm{\Xi'(z_k)^\top(\mu(z_k,y_k)-\hat\mu)}
		=
		\nnorm{y_k-\Xi'(z_k)^\top\hat\mu}
		\to
		\nnorm{\hat y-\Xi'(\bar z)^\top\hat\mu}
		=
		0,
	\]
	proving the announced convergence $\mu(z_k,y_k) \to \mu(\bar z,\hat y)$.
\end{proof}

In particular, \cref{lem:decomposable_multipliers} applies to $(\bar z,\bar y)$
and provides the unique reduced multiplier
$\bar\mu:=\mu(\bar z,\bar y)\in\partial\theta(0)$ satisfying
$\bar y=\Xi'(\bar z)^\top\bar\mu$.
Let us fix the following pieces of notation
\[
	\KK
	:=
	\{q\in\R^p\,|\,\innerprod{\bar\mu}{q}=\theta(q)\},
	\quad
	\overline\Lambda
	:=
	\{\mu\in\partial\theta(0)\,|\,\nabla f_0(\bar x)+G'(\bar x)^\top\mu=0\},
\]
where $G:=\Xi\circ F$ is well defined and twice continuously differentiable
in a neighborhood of $\bar x$, and satisfies $G'(\bar x)=\Xi'(\bar z)F'(\bar x)$.
The set $\KK$ is called the critical cone of $\theta$ at $0$ for
$\bar\mu$. Since $\theta$ is the support function of the set $\partial\theta(0)$,
condition $\innerprod{\bar\mu}{q}=\theta(q)$ just means that $\bar\mu$ maximizes
$\innerprod{\cdot}{q}$ over $\partial\theta(0)$, so that we find the identities
$\KK=\{q\,|\,\forall\mu\in\partial\theta(0)\colon\,\innerprod{\mu-\bar\mu}{q}\leq 0\}
=N_{\partial\theta(0)}(\bar\mu)$
which recover the second representation of the subdifferential
in \cite[Corollary~8.25]{RockafellarWets1998}.
Consequently, $\KK$ is a closed, convex cone, and the bipolar theorem,
see e.g.\ \cite[Corollary~6.21]{RockafellarWets1998}, yields
\begin{equation}\label{eq:critical_cone_polar}
	\KK^\circ
	=
	\cl\cone\bigl(\partial\theta(0)-\bar\mu\bigr)
	\subset
	\HH.
\end{equation}
Furthermore, since, for $y=\Xi'(\bar z)^\top\mu$ with
$\mu\in\partial\theta(0)$, 
$\nabla_x\LL(\bar x,y)=\nabla f_0(\bar x)+G'(\bar x)^\top\mu$
is valid, \cref{lem:decomposable_multipliers} shows that
$\overline Y=\Xi'(\bar z)^\top\overline\Lambda$
is in one-to-one correspondence with the reduced multiplier set
$\overline\Lambda$, and $\bar\mu\in\overline\Lambda$.

The decisive second-order ingredient of our analysis is a formula for the
graphical derivative of $\partial g$ at $(\bar z,\bar y)$, which we borrow
from \cite[Theorem~A.1(b)]{HangSarabi2023}.
Behind it lies a remarkable feature of sublinearity.
By positive homogeneity, the second-order difference quotients of $\theta$
at $0$ for $\bar\mu$ satisfy
\[
	\forall q\in\R^p\colon\quad
	\frac{\theta(tq)-\theta(0)-t\innerprod{\bar\mu}{q}}{\tfrac12 t^2}
	=
	\frac{2\bigl(\theta(q)-\innerprod{\bar\mu}{q}\bigr)}{t}
\]
and, due to $\theta(\cdot)\geq\innerprod{\bar\mu}{\cdot}$, they increase
monotonically as $t\downarrow 0$.
Hence, they epi-converge, see \cite[Definition~7.1]{RockafellarWets1998}, 
to their pointwise supremum $\delta_{\KK}$,
see \cite[Proposition~7.4(d)]{RockafellarWets1998}, i.e.,
$\theta$ is twice epi-differentiable at $0$ for $\bar\mu$,
see \cite[Definition~13.6]{RockafellarWets1998}, 
with second subderivative $\delta_{\KK}$, 
see \cite[equation~(A.2)]{HangSarabi2023} and \cite[page~138]{Milzarek2016}.
Since $\theta$ is proper, lsc, and convex, 
\cite[Example~13.30, Theorem~13.40]{RockafellarWets1998}, thus, show
that $\partial\theta$ is proto-differentiable at $(0,\bar\mu)$,
see \cite[page~331]{RockafellarWets1998}, with
$D(\partial\theta)(0,\bar\mu)=\partial\bigl(\tfrac12\delta_{\KK}\bigr)=N_{\KK}$.
The chain rule from \cite[Theorem~A.1(b)]{HangSarabi2023},
which is available since the nondegeneracy condition \eqref{eq:nondegeneracy} implies,
in view of \eqref{eq:critical_cone_polar}, the weaker dual condition
$\KK^\circ\cap\ker\Xi'(\bar z)^\top=\{0\}$ imposed there,
transfers this formula to $\partial g$ and yields
\begin{equation}\label{eq:D_subdiff_decomposable}
	\forall \zeta\in\R^m\colon\quad
	D(\partial g)(\bar z,\bar y)(\zeta)
	=
	\nabla^2\innerprod{\bar\mu}{\Xi}(\bar z)\zeta
	+\Xi'(\bar z)^\top N_{\KK}(\Xi'(\bar z)\zeta).
\end{equation}

After all these preparations, let us state the announced characterization of
the primal-dual error bounds of interest 
in terms of noncriticality for $C^2$-decomposable functions.
Besides the standing assumptions of this subsection,
it relies on the following two conditions.
\begin{enumerate}[label=($\greeknum*$)]
	\item\label{item:ass_calmness_Yx} Mapping $Y_{\bar x}$ is calm at $((0,0),\bar y)$.
	\item\label{item:ass_closedness} The set
		\begin{equation}\label{eq:closedness_reduced}
			\SS 
			:=
			\KK^\circ-\bigl(\KK^\circ\cap\ker G'(\bar x)^\top\bigr)
		\end{equation}
		is closed.
\end{enumerate}
Both conditions are inherited from \cite{MordukhovichSarabi2019},
see \cref{sec:C2_cone_reducible} for the precise correspondence.
Let us at least mention that \ref{item:ass_closedness} is not overly restrictive.
Whenever $\KK$ is polyhedral, e.g., if $\theta$ is a polyhedral function,
$\SS$ is the sum of two polyhedral cones and, thus, closed,
which is in line with the observation from \cref{sec:gph_subdiff_polyhedral}
that no such condition is needed in polyhedral settings.
Furthermore, \ref{item:ass_closedness} holds trivially whenever
$\KK^\circ\cap\ker G'(\bar x)^\top=\{0\}$, and this is the case, exemplary, if $G'(\bar x)$ is surjective.
Finally, as already mentioned above, \cite[Theorem~2]{HangSarabi2023} shows
that conditions of this type can be avoided altogether when noncriticality of
$\bar y$ is replaced by a suitable second-order sufficient condition.

\begin{theorem}\label{thm:decomposable_characterization}
	In addition to the standing $C^2$-decomposability and nondegeneracy \eqref{eq:nondegeneracy},
	assume that \ref{item:ass_calmness_Yx} and \ref{item:ass_closedness}
	are satisfied.
	Then the following statements are equivalent.
	\begin{enumerate}
		\item The multiplier $\bar y$ is noncritical for $\bar x$.
		\item There are a neighborhood $\NN\subset\R^n\times\R^m$ of $(\bar x,\bar y)$
			and a constant $\kappa>0$ such that \eqref{eq:Isol_Calm_M_estimate_via_y}
			holds for all $(x,y)\in \NN$.
		\item There are a neighborhood $\NN\subset\R^n\times\R^m$ of $(\bar x,\bar y)$
			and a constant $\kappa>0$ such that \eqref{eq:Isol_Calm_M_estimate_via_y_special}
			holds for all $(x,y)\in \NN$.
	\end{enumerate}
\end{theorem}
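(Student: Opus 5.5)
The reduction mirrors \cref{thm:gph_of_subdifferential_polyhedral}. By \cref{lem:semi_isolated_calmness_as_error_bound}, statements (b) and (c) are the partial isolated calmness and the semi-isolated calmness of $M$ w.r.t.\ $x$ at $((0,0),(\bar x,\bar y))$. Hence, by \cref{thm:full_characterization_of_error_bound}, it suffices to verify conditions \ref{item:weak_inner_calmness} and \ref{item:calmness}. Condition \ref{item:calmness} follows at once from \ref{item:ass_calmness_Yx} and \cref{cor:calmness_Y_equivalence_composite}. The whole work is therefore to show that $Y$ is weakly inner calm in the fuzzy sense at $((0,0,\bar x),\bar y)$ in each direction $(0,0,d^x)$ with $d^x\neq 0$; this is the announced \cref{prop:weak_inner_calmness_decomposable}. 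By \cref{Lem:weak_fuzzy_ic}, given realizing sequences we must produce some $d^y\in DY((0,0,\bar x),\bar y)(0,0,d^x)$. By \cref{lem:graphical_derivative_Y} and \eqref{eq:D_subdiff_decomposable}, this means finding $d^y=\nabla^2\innerprod{\bar\mu}{\Xi}(\bar z)F'(\bar x)d^x+\Xi'(\bar z)^\top\lambda$ with $\lambda\in N_\KK(G'(\bar x)d^x)$ and $\nabla^2_{xx}\LL(\bar x,\bar y)d^x+F'(\bar x)^\top d^y=0$.

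To construct $d^y$, take the sequences $t_k\downarrow0$, $(d^v_k,d^u_k,d^x_k)\to(0,0,d^x)$ and $y_k\to\bar y$ with $y_k\in Y(t_kd^v_k,t_kd^u_k,\bar x+t_kd^x_k)$. Set $z_k:=F(x_k)+u_k\to\bar z$. By \cref{lem:decomposable_multipliers}, $y_k=\Xi'(z_k)^\top\mu_k$ with $\mu_k\in\partial\theta(\Xi(z_k))\subset\partial\theta(0)$ and $\mu_k\to\bar\mu$. The obstacle is that $(y_k-\bar y)/t_k$ may be unbounded. Following the trick of \cite{MordukhovichSarabi2019}, measure the deviation from a moving projection instead. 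Condition \ref{item:ass_calmness_Yx}, used through the calmness of $Y$ (the estimate from \cref{lem:relations_calmness_x}), yields $\hat y_k\in\overline Y$ with $\norm{y_k-\hat y_k}=\oo(1)+O(t_k)$, which is $O(t_k)$ since all perturbations are $O(t_k)$. Write $\hat y_k=\Xi'(\bar z)^\top\hat\mu_k$ with $\hat\mu_k\in\overline\Lambda$, $\hat\mu_k\to\bar\mu$. The deviation $(\mu_k-\hat\mu_k)/t_k$ is then controlled modulo the reduction, using \cref{lem:decomposable_multipliers}\ref{item:dec_injectivity} together with $\mu_k-\hat\mu_k\in\HH$, after absorbing the $O(t_k)$ change $\Xi'(z_k)-\Xi'(\bar z)$.

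Next extract the limiting relations. We have $\innerprod{\mu_k}{\Xi(z_k)}=\theta(\Xi(z_k))\geq\innerprod{\mu}{\Xi(z_k)}$ for all $\mu\in\partial\theta(0)$, and $\Xi(z_k)/t_k\to G'(\bar x)d^x=:q$. Passing to the limit gives $q\in\KK$, and the difference $\mu_k-\bar\mu$ lies in $\partial\theta(0)-\bar\mu\subset\KK^\circ$ with complementarity against $\Xi(z_k)$. Taylor expansion of $\nabla_x\LL(x_k,y_k)=t_kd^v_k$ against $\nabla f_0(\bar x)+G'(\bar x)^\top\hat\mu_k=0$ gives
\[
	G'(\bar x)^\top\frac{\mu_k-\hat\mu_k}{t_k}=-\nabla^2_{xx}\LL(\bar x,\bar y)d^x-(\text{curvature of }\innerprod{\bar\mu}{\Xi}\text{ terms})+\oo(1).
\]
The remaining issue is that $\lambda_k:=(\mu_k-\hat\mu_k)/t_k$ need not be bounded. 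Its image under $G'(\bar x)^\top$ is bounded, and $\lambda_k\in\KK^\circ-\KK^\circ$ up to the projection. This is where \ref{item:ass_closedness} enters. Because $\SS=\KK^\circ-(\KK^\circ\cap\ker G'(\bar x)^\top)$ is closed, I would replace $\lambda_k$ by $\lambda_k-\omega_k$ with $\omega_k\in\KK^\circ\cap\ker G'(\bar x)^\top$ chosen so that the result stays in a bounded set. The cleanest route is the usual normalization argument: if no bounded choice existed, dividing by the norm and using closedness of $\SS$ would give a contradiction. A cluster point $\lambda$ then satisfies $\lambda\in\KK^\circ$, $\innerprod{\lambda}{q}=0$ (from complementarity divided by $t_k$), i.e.\ $\lambda\in N_\KK(q)$, together with the Lagrangian equation. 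Setting $d^y$ accordingly yields $d^y\in DY((0,0,\bar x),\bar y)(0,0,d^x)$, so $(0,0,d^x)\in\dom DY$. \cref{Lem:weak_fuzzy_ic} then gives \ref{item:weak_inner_calmness}.

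I expect the boundedness/closedness step to be the main difficulty. In particular, it must be checked that the shifted multipliers still satisfy the complementarity needed to land in $N_\KK(q)$. Adding elements of $\KK^\circ\cap\ker G'(\bar x)^\top$ could break $\innerprod{\lambda}{q}=0$. I would handle this by noting $\innerprod{\omega}{q}=\innerprod{G'(\bar x)^\top\omega}{d^x}=0$ for all $\omega\in\ker G'(\bar x)^\top$, so the shift is harmless.
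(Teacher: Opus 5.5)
Your reduction via \cref{lem:semi_isolated_calmness_as_error_bound}, \cref{thm:full_characterization_of_error_bound}, and \cref{cor:calmness_Y_equivalence_composite} matches the paper. So does your setup for \cref{prop:weak_inner_calmness_decomposable}: projection onto $\overline Y$ at rate $O(t_k)$, reduced multipliers $\hat\mu_k\in\overline\Lambda$, $q\in\KK$, and the Taylor expansion. The step where \ref{item:ass_closedness} enters, however, has a genuine gap.

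First, your claim that $\lambda_k=(\mu_k-\hat\mu_k)/t_k$ ``need not be bounded'' is false. Since $\mu_k-\hat\mu_k\in\HH$, \cref{lem:decomposable_multipliers}\,\ref{item:dec_injectivity} gives $c\nnorm{\mu_k-\hat\mu_k}\leq\nnorm{y_k-\hat y_k}+\nnorm{(\Xi'(z_k)-\Xi'(\bar z))^\top\mu_k}\leq Ct_k$. Boundedness therefore comes from \ref{item:ass_calmness_Yx} and nondegeneracy, not from closedness.

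Second, the role you assign to closedness does not work. You want, for each $k$, a bounded shift $\omega_k$ with $\lambda_k-\omega_k\in\KK^\circ$; note that $\omega_k$ would have to lie in $\QQ=-(\KK^\circ\cap\ker G'(\bar x)^\top)$, not in $\KK^\circ\cap\ker G'(\bar x)^\top$. Normalizing an unbounded decomposition only produces a unit vector in $\KK^\circ\cap\ker G'(\bar x)^\top$. That is no contradiction, since this cone is nontrivial whenever $\overline Y$ is not a singleton. In fact, closedness of sets of the form of $\SS$ does not yield bounded decompositions. Take the closed convex cone $\Gamma:=\{(a,b,c,s)\,|\,0\leq a\leq c,\ b^4\leq ac^3,\ as\geq b^2,\ s\geq 0\}\subset\R^4$ and $E:=\{0\}^3\times\R$. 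The set $\Gamma-(\Gamma\cap E)=\Gamma+E$ is closed. Yet the bounded points $(\beta^4,\beta,1,0)$, $\beta\downarrow 0$, admit only decompositions whose components have norm at least $\beta^{-2}$. As written, you never secure $\lambda\in\KK^\circ$, and that is exactly what $\lambda\in N_\KK(q)$ requires.

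The repair is to decompose only in the limit, as the paper does. Since $\hat\mu_k,\bar\mu\in\overline\Lambda$, you have $\hat\mu_k-\bar\mu\in\KK^\circ\cap\ker G'(\bar x)^\top$, and hence $\lambda_k=(\mu_k-\bar\mu)/t_k-(\hat\mu_k-\bar\mu)/t_k\in\SS$. By boundedness and \ref{item:ass_closedness}, a cluster point $\bar\lambda$ lies in $\SS$. So $\bar\lambda=\eta+\eta'$ with $\eta\in\KK^\circ$ and $\eta'\in\QQ\subset\ker G'(\bar x)^\top$.

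Then $G'(\bar x)^\top\eta=G'(\bar x)^\top\bar\lambda$ takes care of the Lagrangian equation. For the normal cone, $\innerprod{\eta'}{q}=\innerprod{G'(\bar x)^\top\eta'}{d^x}=0$ gives $\innerprod{\eta}{q}=\innerprod{\bar\lambda}{q}$. This is $\geq 0$, by dividing $\innerprod{\mu_k-\hat\mu_k}{\Xi(z_k)}\geq 0$ by $t_k^2$ and passing to the limit. Together with $\innerprod{\eta}{q}\leq 0$ from $\eta\in\KK^\circ$ and $q\in\KK$, this yields $\eta\in N_\KK(q)$. The required witness is then $d^y:=\nabla^2\innerprod{\bar\mu}{\Xi}(\bar z)F'(\bar x)d^x+\Xi'(\bar z)^\top\eta$.
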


In view of \cref{thm:full_characterization_of_error_bound},
the crucial step in the proof of \cref{thm:decomposable_characterization},
which we postpone for a moment,
is the verification of condition \ref{item:weak_inner_calmness},
which is the decisive result of this subsection.
The verification is driven by the characterization of weak inner calmness in
the fuzzy sense from \cref{Lem:weak_fuzzy_ic}.

\begin{proposition}\label{prop:weak_inner_calmness_decomposable}
	In addition to the standing $C^2$-decomposability and nondegeneracy \eqref{eq:nondegeneracy},
	assume that \ref{item:ass_calmness_Yx} and \ref{item:ass_closedness}
	are satisfied.
	Then $Y$ is weakly inner calm in the fuzzy sense at $((0,0,\bar x),\bar y)$
	in each direction $(0,0,d^x)$ such that $d^x\neq 0$, i.e.,
	condition \ref{item:weak_inner_calmness} of \cref{thm:full_characterization_of_error_bound} holds.
\end{proposition}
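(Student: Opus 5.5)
The plan is to verify the criterion of \cref{Lem:weak_fuzzy_ic}. Fix $d^x\neq 0$ and assume that sequences $t_k\downarrow 0$, $(d^v_k,d^u_k,d^x_k)\to(0,0,d^x)$ and $y_k\to\bar y$ with $y_k\in Y(v_k,u_k,x_k)$ exist, where $(v_k,u_k,x_k):=(t_kd^v_k,t_kd^u_k,\bar x+t_kd^x_k)$. If no such sequences exist, the property holds trivially. By \cref{Lem:weak_fuzzy_ic}, it suffices to show $(0,0,d^x)\in\dom DY((0,0,\bar x),\bar y)$. By \cref{lem:graphical_derivative_Y} and \eqref{eq:D_subdiff_decomposable}, this amounts to finding $\lambda$ with
\[
	\lambda\in N_{\KK}(G'(\bar x)d^x),\qquad
	\nabla^2_{xx}\LL(\bar x,\bar y)d^x+F'(\bar x)^\top\nabla^2\innerprod{\bar\mu}{\Xi}(\bar z)F'(\bar x)d^x+G'(\bar x)^\top\lambda=0,
\]
and then setting $d^y:=\nabla^2\innerprod{\bar\mu}{\Xi}(\bar z)F'(\bar x)d^x+\Xi'(\bar z)^\top\lambda$. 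Note that the sum of the first two terms in the equation is the Hessian of $f_0+\innerprod{\bar\mu}{G}$ at $\bar x$, using $\bar y=\Xi'(\bar z)^\top\bar\mu$.

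First, I construct reduced multipliers. Put $z_k:=F(x_k)+u_k\to\bar z$ and $\mu_k:=\mu(z_k,y_k)\in\partial\theta(\Xi(z_k))$ from \cref{lem:decomposable_multipliers}; then $\mu_k\to\bar\mu$. By \ref{item:ass_calmness_Yx} and \cref{cor:calmness_Y_equivalence_composite}, $Y$ is calm, so $\dist(y_k,\overline Y)=O(t_k)$. Pick projections $\hat y_k\in\overline Y$ with $\nnorm{y_k-\hat y_k}=O(t_k)$, and write $\hat y_k=\Xi'(\bar z)^\top\hat\mu_k$ with $\hat\mu_k\in\overline\Lambda$. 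By item \ref{item:dec_multiplier_convergence} of \cref{lem:decomposable_multipliers}, $\hat\mu_k\to\bar\mu$. Since $\mu_k-\hat\mu_k\in\HH$, item \ref{item:dec_injectivity} gives
\[
	c\nnorm{\mu_k-\hat\mu_k}\le\nnorm{y_k-\hat y_k}+\nnorm{(\Xi'(z_k)-\Xi'(\bar z))^\top\hat\mu_k}=O(t_k).
\]
Hence, along a subsequence, $\lambda_k:=(\mu_k-\hat\mu_k)/t_k\to\tilde\lambda$.

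Second, I pass to the limit in stationarity. Locally, $v_k=\nabla f_0(x_k)+G'(x_k)^\top\mu_k$ and $0=\nabla f_0(\bar x)+G'(\bar x)^\top\hat\mu_k$. Subtracting, dividing by $t_k$, and Taylor expanding in $x$ yields $\nabla^2(f_0+\innerprod{\bar\mu}{G})(\bar x)d^x+G'(\bar x)^\top\tilde\lambda=0$. For the cone part, set $w:=G'(\bar x)d^x=\lim\Xi(z_k)/t_k$, which uses $d^u_k\to 0$. From $\theta(\Xi(z_k)/t_k)=\innerprod{\mu_k}{\Xi(z_k)/t_k}\to\innerprod{\bar\mu}{w}$ and lower semicontinuity, $\theta(w)\le\innerprod{\bar\mu}{w}$, so $w\in\KK$. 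Moreover, $\innerprod{\mu_k-\hat\mu_k}{\Xi(z_k)}=\theta(\Xi(z_k))-\innerprod{\hat\mu_k}{\Xi(z_k)}\ge 0$ because $\hat\mu_k\in\partial\theta(0)$, and dividing by $t_k^2$ gives $\innerprod{\tilde\lambda}{w}\ge 0$.

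The main obstacle is that $\tilde\lambda$ need not lie in $\KK^\circ$, since $\mu_k-\hat\mu_k$ is not a direction from $\bar\mu$. Here \ref{item:ass_closedness} enters. Write $\mu_k-\hat\mu_k=(\mu_k-\bar\mu)-(\hat\mu_k-\bar\mu)$. Both terms lie in $\cone(\partial\theta(0)-\bar\mu)\subset\KK^\circ$ by \eqref{eq:critical_cone_polar}, and $G'(\bar x)^\top(\hat\mu_k-\bar\mu)=0$ because both $\hat\mu_k$ and $\bar\mu$ lie in $\overline\Lambda$. Thus $\lambda_k\in\SS$, and closedness gives $\tilde\lambda=a-b$ with $a\in\KK^\circ$ and $b\in\KK^\circ\cap\ker G'(\bar x)^\top$. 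Take $\lambda:=a$. Then $G'(\bar x)^\top a=G'(\bar x)^\top\tilde\lambda$, so the Hessian equation is preserved. Furthermore, $\innerprod{b}{w}=\innerprod{G'(\bar x)^\top b}{d^x}=0$, so $\innerprod{a}{w}=\innerprod{\tilde\lambda}{w}\ge 0$. Together with $a\in\KK^\circ$ and $w\in\KK$, this gives $\innerprod{a}{w}=0$, i.e., $a\in N_{\KK}(w)$, which completes the verification.
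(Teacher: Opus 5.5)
Your proposal is correct and follows the paper's proof essentially step by step: reduced multipliers $\mu_k$, nearby $\hat\mu_k\in\overline\Lambda$ at rate $t_k$ from calmness, $\lambda_k\in\SS$ together with closedness, the splitting $\tilde\lambda=a-b$ combined with monotonicity, and the transfer via \eqref{eq:D_subdiff_decomposable}. Your direct derivation of $\innerprod{a}{w}=0$ is even slightly leaner, since it shows that the paper's extra step $\innerprod{\tilde\lambda}{w}\le 0$ (obtained there from $(\KK\cap\QQ^\circ)^\circ=\SS$ with $\QQ:=-(\KK^\circ\cap\ker G'(\bar x)^\top)$) is dispensable; the only slip is that $v_k=\nabla f_0(x_k)+G'(x_k)^\top\mu_k$ holds merely up to an $\oo(t_k)$ error, because $y_k=\Xi'(F(x_k)+u_k)^\top\mu_k$ rather than $\Xi'(F(x_k))^\top\mu_k$, which is harmless as $u_k=t_kd^u_k$ with $d^u_k\to 0$.
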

\begin{proof}
	Fix a direction $d^x\neq 0$ and assume that there are sequences $t_k\downarrow 0$,
	$y_k\to\bar y$, and $(d^v_k,d^u_k,d^x_k)\to(0,0,d^x)$
	such that $y_k\in Y(v_k,u_k,x_k)$ for each $k\in\N$,
	where $(v_k,u_k,x_k):=(t_kd^v_k,t_kd^u_k,\bar x+t_kd^x_k)$;
	otherwise, $Y$ is trivially weakly inner calm in the fuzzy sense
	at $((0,0,\bar x),\bar y)$ in direction $(0,0,d^x)$,
	see \cref{def:weak_fuzzy_inner_calmness}.
	Due to \cref{Lem:weak_fuzzy_ic}, it suffices to show
	$(0,0,d^x)\in\dom DY((0,0,\bar x),\bar y)$ and,
	by \cref{lem:graphical_derivative_Y}, the latter amounts to finding $d^y\in\R^m$ with
	\begin{equation}\label{eq:decomposable_criticality_witness}
		\nabla^2_{xx}\LL(\bar x,\bar y)d^x+F'(\bar x)^\top d^y=0,
		\quad
		d^y\in D(\partial g)(\bar z,\bar y)(F'(\bar x)d^x).
	\end{equation}
	Note that $v_k=t_kd^v_k=\oo(t_k)$ and $u_k=t_kd^u_k=\oo(t_k)$ due to
	$(d^v_k,d^u_k)\to(0,0)$, and let $C>0$ denote a generic constant
	which is enlarged when necessary.

	Let us start with some preparations.
	Set $w_k:=F(x_k)+u_k$ and note that $\norm{w_k-\bar z}\leq Ct_k$, so $w_k\to\bar z$.
	By definition of $Y$, we have $\nabla_x\LL(x_k,y_k)=v_k$ and
	$y_k\in\partial g(w_k)$ for each $k\in\N$.
	Hence, for each sufficiently large $k\in\N$,
	\cref{lem:decomposable_multipliers} provides the unique multiplier
	$\mu_k:=\mu(w_k,y_k)\in\partial\theta(s_k)$ satisfying
	$y_k=\Xi'(w_k)^\top\mu_k$, where $s_k:=\Xi(w_k)$,
	and guarantees $\mu_k\to\bar\mu$.
	By \eqref{eq:subdiff_sublinear}, the multiplier relation means nothing
	else but $\mu_k\in\partial\theta(0)$ and
	$\innerprod{\mu_k}{s_k}=\theta(s_k)$.
	Furthermore, a first-order Taylor expansion of $\Xi$ at $\bar z$,
	exploiting $\Xi(\bar z)=0$, gives $q_k:=s_k/t_k\to G'(\bar x)d^x=:\bar q$,
	and the lower semicontinuity and positive homogeneity of $\theta$ yield
	\begin{equation}\label{eq:qbar_critical}
		\theta(\bar q)
		\leq
		\liminf_{k\to\infty}\theta(q_k)
		=
		\liminf_{k\to\infty}\innerprod{\mu_k}{q_k}
		=
		\innerprod{\bar\mu}{\bar q}
		\leq
		\theta(\bar q),
	\end{equation}
	where the last estimate is due to $\bar\mu\in\partial\theta(0)$.
	Hence, $\theta(\bar q)=\innerprod{\bar\mu}{\bar q}$, i.e., $\bar q\in\KK$.

	Next, we utilize the calmness of $Y_{\bar x}$ imposed in \ref{item:ass_calmness_Yx}.
	Arguing as in the proof of \cref{lem:semi_isolated_calmness_as_error_bound},
	with $x:=\bar x$ fixed, it can be equivalently recast as the error bound
	\[
		\dist(y,\overline Y)
		\leq
		\rho\bigl(\norm{\nabla_x\LL(\bar x,y)}+\dist(\bar z,(\partial g)^{-1}(y))\bigr)
	\]
	for all $y$ sufficiently close to $\bar y$ and some constant $\rho>0$.
	For $y:=y_k$, $k\in\N$, we can estimate
	$\norm{\nabla_x\LL(\bar x,y_k)}\leq\norm{v_k}+C\norm{x_k-\bar x}\leq Ct_k$
	by local Lipschitz continuity of $\nabla_x\LL$ w.r.t.\ $x$, 
	uniformly along the bounded sequence $y_k$,
	as well as $\dist(\bar z,(\partial g)^{-1}(y_k))\leq\norm{w_k-\bar z}\leq Ct_k$
	due to $w_k\in(\partial g)^{-1}(y_k)$.
	For each $k\in\N$, this yields a multiplier $y'_k\in\overline Y$ with $\norm{y_k-y'_k}\leq Ct_k$,
	and, clearly, $y'_k\to\bar y$.
	Due to $\overline Y=\Xi'(\bar z)^\top\overline\Lambda$,
	\cref{lem:decomposable_multipliers} provides the associated reduced
	multipliers $\mu'_k:=\mu(\bar z,y'_k)\in\overline\Lambda$ which satisfy
	$y'_k=\Xi'(\bar z)^\top\mu'_k$ and converge to $\bar\mu$.
	Since $\mu_k-\mu'_k\in\partial\theta(0)-\partial\theta(0)\subset\HH$,
	\cref{lem:decomposable_multipliers} also gives
	\[
		c\nnorm{\mu_k-\mu'_k}
		\leq
		\nnorm{\Xi'(\bar z)^\top(\mu_k-\mu'_k)}
		=
		\nnorm{y_k-y'_k-(\Xi'(w_k)-\Xi'(\bar z))^\top\mu_k}
		\leq
		Ct_k
	\]
	due to the local Lipschitz continuity of $\Xi'$ and boundedness of $\mu_k$.
	Thus, the sequence given by $\eta_k:=(\mu_k-\mu'_k)/t_k$, $k\in\N$, is bounded and converges,
	along a subsequence (without relabeling), to some $\bar\eta\in\R^p$.

	The decisive step is to locate $\bar\eta$.
	Set $\QQ:=-\bigl(\KK^\circ\cap\ker G'(\bar x)^\top\bigr)$
	and note that $\QQ$ is a closed, convex cone satisfying
	$\SS =\KK^\circ+\QQ$, see \eqref{eq:closedness_reduced}.
	Due to $\mu_k\in\partial\theta(0)$ and
	$\bar\mu,\mu'_k\in\overline\Lambda\subset\partial\theta(0)$,
	the polarity relation \eqref{eq:critical_cone_polar} gives
	\[
		\mu_k-\mu'_k
		=
		(\mu_k-\bar\mu)-(\mu'_k-\bar\mu)
		\in
		\KK^\circ
		-
		\bigl(\KK^\circ\cap\ker G'(\bar x)^\top\bigr)
		=
		\KK^\circ+\QQ
		=
		\SS ,
	\]
	where we exploited $G'(\bar x)^\top(\mu'_k-\bar\mu)=0$,
	valid by definition of $\overline\Lambda$.
	As all the involved sets are cones, this gives $\eta_k\in\SS $ for each $k\in\N$,
	and the closedness of $\SS $, guaranteed by \ref{item:ass_closedness},
	yields $\bar\eta\in\SS $.
	Let us now verify
	\begin{equation}\label{eq:eta_in_normal_cone}
		\bar q \in \KK\cap\QQ^\circ,
		\quad
		\innerprod{\bar\eta}{\bar q} = 0.
	\end{equation}
	Note that we have 
	\[
		(\KK\cap\QQ^\circ)^\circ
		=
		\cl(\KK^\circ + (\QQ^\circ)^\circ)
		=
		\cl(\KK^\circ+\QQ)
		=
		\cl\SS
		=
		\SS
	\]
	according to \cite[equation (2.32)]{BonnansShapiro2000},
	\cite[Corollary~6.21]{RockafellarWets1998}, and \ref{item:ass_closedness}.
	Hence, $\innerprod{\bar\eta}{q}\leq 0$ holds 	for all $q\in\KK\cap\QQ^\circ$.
	Moreover, $\bar q\in\KK$ has been shown via \eqref{eq:qbar_critical},
	and $\bar q=G'(\bar x)d^x$ gives
	$\innerprod{\bar q}{s}=\innerprod{d^x}{G'(\bar x)^\top s}=0$ for each $s\in\QQ$,
	i.e., $\bar q\in\QQ^\circ$.
	Finally, the monotonicity of the subdifferential mapping $\partial\theta$
	associated with the convex function $\theta$,
	see \cite[Theorem~12.17]{RockafellarWets1998}, gives
	$0\leq\innerprod{\mu_k-\mu'_k}{s_k-0}=t_k^2\innerprod{\eta_k}{q_k}$
	due to $\mu_k\in\partial\theta(s_k)$ and $\mu'_k\in\partial\theta(0)$,
	so taking the limit $k\to\infty$ yields $\innerprod{\bar\eta}{\bar q}\geq 0$.
	Taking everything together, \eqref{eq:eta_in_normal_cone} follows.
	Due to $\SS=\KK^\circ+\QQ$, we find $\eta\in\KK^\circ$ and $\eta'\in\QQ$ 
	such that $\bar\eta=\eta+\eta'$. With the aid of \eqref{eq:eta_in_normal_cone},
	$0 \leq 	\ninnerprod{-\eta}{\bar q} = \ninnerprod{\eta'-\bar\eta}{\bar q}
	= \ninnerprod{\eta'}{\bar q} \leq 0$
	follows, yielding $\innerprod{\eta}{\bar q}=0$ and, thus,
	$\eta\in N_\KK(\bar q)$ since $\KK$ is a closed, convex cone.
	Furthermore, $\eta'\in\QQ\subset\ker G'(\bar x)^\top$ holds true.
	Hence, we end up with
	\begin{equation}\label{eq:eta_located}
		\bar\eta=\eta+\eta',
		\quad
		\eta\in N_{\KK}(\bar q),
		\quad
		G'(\bar x)^\top\eta'=0.
	\end{equation}

	Let us now take care of the first relation in \eqref{eq:decomposable_criticality_witness}.
	Using $\norm{w_k-F(x_k)}=\norm{u_k}=\oo(t_k)$, boundedness of $x_k$ and $\mu_k$,
	continuity of $F$, and local Lipschitz continuity of $\Xi'$, we obtain
	\[
		\oo(t_k)
		=
		v_k
		=
		\nabla f_0(x_k)+F'(x_k)^\top\Xi'(w_k)^\top\mu_k
		=
		\nabla f_0(x_k)+G'(x_k)^\top\mu_k+\oo(t_k).
	\]
	Due to $\nabla f_0(\bar x)+G'(\bar x)^\top\bar\mu=0$, a first-order Taylor expansion of
	$x\mapsto\nabla f_0(x)+G'(x)^\top\bar\mu$ at $\bar x$ yields
	\begin{align*}
		\nabla f_0(x_k)+G'(x_k)^\top\mu_k
		&=
		\nabla f_0(x_k)+G'(x_k)^\top\bar\mu
		+G'(x_k)^\top(\mu_k-\mu'_k)
		+G'(x_k)^\top(\mu'_k-\bar\mu)
		\\
		&=
		t_kHd^x_k
		+t_kG'(\bar x)^\top\eta_k
		+\oo(t_k),
	\end{align*}
	where $H:=\nabla^2f_0(\bar x)+\nabla^2\innerprod{\bar\mu}{G}(\bar x)$,
	and where we used $G'(\bar x)^\top(\mu'_k-\bar\mu)=0$ together with the estimates
	$\nnorm{(G'(x_k)-G'(\bar x))^\top(\mu_k-\mu'_k)}\leq Ct_k\cdot t_k\nnorm{\eta_k}=\oo(t_k)$ 
	as well as
	$\nnorm{(G'(x_k)-G'(\bar x))^\top(\mu'_k-\bar\mu)}\leq Ct_k\nnorm{\mu'_k-\bar\mu}=\oo(t_k)$.
	Dividing by $t_k$ and passing to the limit gives $Hd^x+G'(\bar x)^\top\bar\eta=0$
	and, in view of \eqref{eq:eta_located},
	\begin{equation}\label{eq:reduced_witness_equation}
		Hd^x+G'(\bar x)^\top\eta=0.
	\end{equation}

	It remains to transfer these findings back to the original data.
	A direct computation based on the chain rule and $\bar y=\Xi'(\bar z)^\top\bar\mu$ reveals
	\[
		H=\nabla^2_{xx}\LL(\bar x,\bar y)+F'(\bar x)^\top\nabla^2\innerprod{\bar\mu}{\Xi}(\bar z)F'(\bar x).
	\]
	Hence, setting
	$d^y:=\nabla^2\innerprod{\bar\mu}{\Xi}(\bar z)F'(\bar x)d^x+\Xi'(\bar z)^\top\eta$
	and observing $\Xi'(\bar z)F'(\bar x)d^x=G'(\bar x)d^x=\bar q$,
	we obtain $d^y\in D(\partial g)(\bar z,\bar y)(F'(\bar x)d^x)$
	from \eqref{eq:D_subdiff_decomposable} and \eqref{eq:eta_located}
	as well as
	\[
		\nabla^2_{xx}\LL(\bar x,\bar y)d^x+F'(\bar x)^\top d^y
		=
		Hd^x+G'(\bar x)^\top\eta
		=
		0
	\]
	from \eqref{eq:reduced_witness_equation}.
	This verifies \eqref{eq:decomposable_criticality_witness} and completes the proof.
\end{proof}

\begin{proof}[Proof of \cref{thm:decomposable_characterization}]
	Condition \ref{item:weak_inner_calmness} of
	\cref{thm:full_characterization_of_error_bound} is valid by
	\cref{prop:weak_inner_calmness_decomposable},
	while condition \ref{item:calmness} follows from \ref{item:ass_calmness_Yx}
	via \cref{cor:calmness_Y_equivalence_composite}.
	Hence, analogously to the proof of \cref{thm:gph_of_subdifferential_polyhedral},
	the claim follows by combining \cref{thm:full_characterization_of_error_bound}
	and \cref{lem:semi_isolated_calmness_as_error_bound}.
\end{proof}

At the first glance, condition \ref{item:ass_closedness} seems to depend on the
decomposition \eqref{eq:decomposition} we have fixed, since the critical cone
$\KK$ is defined in terms of $\theta$ and $\bar\mu$
while function $G$ is the composition of $\Xi$ and $F$.
The following remark clarifies that this is not the case.

\begin{remark}\label{rem:closedness_representation_free}
	Evaluating \eqref{eq:D_subdiff_decomposable} at $\zeta:=0$ yields
	\[
		\Sigma
		:=
		D(\partial g)(\bar z,\bar y)(0)
		=
		\Xi'(\bar z)^\top N_{\KK}(0)
		=
		\Xi'(\bar z)^\top\KK^\circ,
	\]
	and the set on the left does not refer to \eqref{eq:decomposition} at all.
	Due to the nondegeneracy condition \eqref{eq:nondegeneracy},
	the linear mapping $L\colon\HH\to\R^m$ given by
	$L\nu:=\Xi'(\bar z)^\top\nu$ is injective,
	so it is a homeomorphism from $\HH$ onto the subspace $L(\HH)$.
	Furthermore, relation $G'(\bar x)^\top=F'(\bar x)^\top\Xi'(\bar z)^\top$ shows that
	some $\mu\in\KK^\circ$ satisfies $G'(\bar x)^\top\mu=0$
	if and only if $L\mu\in\ker F'(\bar x)^\top$ is valid,
	and, as $\SS\subset\HH$ holds by \eqref{eq:closedness_reduced}
	as well as $\KK^\circ\subset\HH$, see \eqref{eq:critical_cone_polar},
	we obtain
	\[
		L(\SS)
		=
		L(\KK^\circ)-L\bigl(\KK^\circ\cap\ker G'(\bar x)^\top\bigr)
		=
		\Sigma-\bigl(\Sigma\cap\ker F'(\bar x)^\top\bigr).
	\]
	Due to $\SS\subset\HH$ and the fact that $L$ is an homeomorphism
	from $\HH$ onto $L(\HH)$, closedness of $\SS$ and closedness of
	$L(\SS)$ are equivalent.
	Hence, \ref{item:ass_closedness} holds if and only if the set
	\begin{equation}\label{eq:closedness_original}
		D(\partial g)(\bar z,\bar y)(0)
		-
		\bigl(D(\partial g)(\bar z,\bar y)(0)\cap\ker F'(\bar x)^\top\bigr)
	\end{equation}
	is closed, i.e., \ref{item:ass_closedness} is a condition on the original
	data $g$, $F$, $\bar x$, and $\bar y$ alone, and, in particular,
	independent of the chosen decomposition \eqref{eq:decomposition}.
\end{remark}

\subsubsection{$C^2$-cone reducible constraints}\label{sec:C2_cone_reducible}

Let us now apply our findings to the setting of
\cite[Theorem~5.6]{MordukhovichSarabi2019},
where $g:=\delta_D$ is the indicator function of a closed set $D\subset\R^m$
which is so-called $C^2$-cone reducible at $\bar z$,
so that $\partial g=N_D$ is the limiting normal cone mapping.
The imposed reducibility means that there are an open neighborhood
$\VV\subset\R^m$ of $\bar z$, a twice continuously differentiable
mapping $h\colon\R^m\to\R^p$, and a closed, convex cone $K\subset\R^p$
such that $h(\bar z)=0$, $h'(\bar z)$ possesses full row rank $p$, and
\begin{equation}\label{eq:C2_reduction}
	D\cap\VV = \{z\in\VV\,|\,h(z)\in K\},
\end{equation}
see \cite[Definition~3.135]{BonnansShapiro2000} where this concept is
discussed in detail.
Exemplary, the second-order cone and the cone of all positive semidefinite matrices
are $C^2$-cone reducible, see \cite[Section~3.4.4]{BonnansShapiro2000}.

The reduction \eqref{eq:C2_reduction} yields precisely
$\delta_D(z)=\delta_K(h(z))$ for all $z\in\VV$, i.e.,
$g$ is $C^2$-decomposable at $\bar z:=F(\bar x)$ with the outer function
$\theta:=\delta_K$, which is proper, lsc, and sublinear
since $K$ is a closed, convex cone, and the inner mapping $\Xi:=h$.
Moreover, the full row rank of $h'(\bar z)$, i.e., the surjectivity of
$h'(\bar z)$, yields $\ker h'(\bar z)^\top=\{0\}$, so the nondegeneracy
condition \eqref{eq:nondegeneracy} holds trivially.
The data of \cref{sec:decomposable} specialize as follows:
we have $\partial\theta(0)=N_K(0)=K^\circ$,
the characterization \eqref{eq:subdiff_sublinear} reduces to the classical complementarity form
$\partial\theta(q)=N_K(q)=\{\mu\in K^\circ\,|\,\innerprod{\mu}{q}=0\}$ for $q\in K$, and
$\KK = \{q\,|\,\innerprod{\bar\mu}{q}=\delta_K(q)\} = K\cap\{\bar\mu\}^\perp$
is the critical cone to $K$ at $0$ for the unique reduced multiplier
$\bar\mu\in K^\circ$ satisfying $\bar y=h'(\bar z)^\top\bar\mu$.
Let us also mention that
\cref{lem:decomposable_multipliers}\,\ref{item:dec_chain_rule} recovers the
well-known representation $N_D(w)=h'(w)^\top N_K(h(w))$,
valid for all $w\in D$ sufficiently close to $\bar z$,
see \cite[Exercise~6.7]{RockafellarWets1998},
while \eqref{eq:D_subdiff_decomposable} recovers the graphical derivative
formula from \cite[equation (5.2)]{MordukhovichSarabi2019}.
Consequently, \cref{thm:decomposable_characterization} immediately yields the
following result.

\begin{corollary}\label{cor:C2_characterization}
	Let $g:=\delta_D$ be the indicator function of a closed set $D\subset\R^m$
	which is $C^2$-cone reducible at $\bar z$.
	Assume that $Y_{\bar x}$ is calm at $((0,0),\bar y)$ and that the set
	$\SS $ from \eqref{eq:closedness_reduced}, associated with the above
	specializations $\KK=K\cap\{\bar\mu\}^\perp$ and $G=h\circ F$,
	is closed.
	Then the following statements are equivalent.
	\begin{enumerate}
		\item The multiplier $\bar y$ is noncritical for $\bar x$.
		\item There are a neighborhood $\NN\subset\R^n\times\R^m$ of $(\bar x,\bar y)$
			and a constant $\kappa>0$ such that \eqref{eq:Isol_Calm_M_estimate_via_y}
			holds for all $(x,y)\in \NN$.
		\item There are a neighborhood $\NN\subset\R^n\times\R^m$ of $(\bar x,\bar y)$
			and a constant $\kappa>0$ such that \eqref{eq:Isol_Calm_M_estimate_via_y_special}
			holds for all $(x,y)\in \NN$.
	\end{enumerate}
\end{corollary}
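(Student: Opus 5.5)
The plan is to deduce \cref{cor:C2_characterization} directly from \cref{thm:decomposable_characterization}. No new analysis is needed; what has to be checked is that the $C^2$-cone reducible setting is an instance of the $C^2$-decomposable framework of \cref{sec:decomposable}, and that the assumptions of the corollary are exactly \ref{item:ass_calmness_Yx} and \ref{item:ass_closedness} under this identification.

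\textbf{Step 1 (decomposition).} Using the reduction \eqref{eq:C2_reduction}, I will note that $\delta_D(z)=\delta_K(h(z))$ for all $z\in\VV$. Since $g(\bar z)=0$ because $\bar z\in D$, this is precisely \eqref{eq:decomposition} with $\theta:=\delta_K$ and $\Xi:=h$. The function $\theta$ is proper, lsc and sublinear because $K$ is a nonempty, closed, convex cone. Moreover, $\Xi$ is $C^2$ with $\Xi(\bar z)=h(\bar z)=0$.

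\textbf{Step 2 (nondegeneracy).} Surjectivity of $h'(\bar z)$ gives $\ker h'(\bar z)^\top=\{0\}$, so \eqref{eq:nondegeneracy} holds trivially, whatever the subspace $\HH$ parallel to the affine hull of $\partial\theta(0)=K^\circ$ is. \cref{ass:composite_problem_basic} is standing; in particular, closedness of $\gph N_D$ is part of it, or follows from closedness of $D$ via robustness of the limiting normal cone.

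\textbf{Step 3 (data).} I will specialize the objects of \cref{sec:decomposable}, as in the preceding discussion:
\begin{itemize}
\item $\partial\theta(0)=K^\circ$;
\item $\bar\mu$ is the unique element of $K^\circ$ with $\bar y=h'(\bar z)^\top\bar\mu$, supplied by \cref{lem:decomposable_multipliers};
\item $\KK=\{q\,|\,\innerprod{\bar\mu}{q}=\delta_K(q)\}=K\cap\{\bar\mu\}^\perp$;
\item $G=h\circ F$.
\end{itemize}
Consequently, the set $\SS$ from \eqref{eq:closedness_reduced} built from these specialized objects is exactly the set assumed closed in the corollary, which gives \ref{item:ass_closedness}. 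The assumed calmness of $Y_{\bar x}$ at $((0,0),\bar y)$ is literally \ref{item:ass_calmness_Yx}.

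\textbf{Conclusion.} With all hypotheses of \cref{thm:decomposable_characterization} verified, the three equivalent statements there coincide verbatim with those of the corollary. The error bounds \eqref{eq:Isol_Calm_M_estimate_via_y} and \eqref{eq:Isol_Calm_M_estimate_via_y_special} are formulated with $\partial g=N_D$, and the noncriticality notion of \cref{def:criticality} does not depend on the decomposition. Hence the equivalence follows.

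The only point needing mild care is the identification of $\KK$ and of $\partial\theta(0)$ for $\theta=\delta_K$. The identity $\partial\delta_K(0)=N_K(0)=K^\circ\cap\{0\}^\perp=K^\circ$ follows from the formula for normal cones to closed convex cones recalled in \cref{sec:preliminaries}. For $\KK$, one uses $\delta_K(q)=0$ for $q\in K$ and $\delta_K(q)=\infty$ otherwise. I therefore do not expect a genuine obstacle: the corollary is a bookkeeping consequence of \cref{thm:decomposable_characterization}.
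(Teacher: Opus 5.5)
Your proposal is correct and follows the paper's own argument essentially verbatim. The paper likewise identifies $\theta:=\delta_K$ and $\Xi:=h$ and notes that surjectivity of $h'(\bar z)$ makes the nondegeneracy condition \eqref{eq:nondegeneracy} trivial. It then specializes $\partial\theta(0)=K^\circ$ and $\KK=K\cap\{\bar\mu\}^\perp$, so that the assumptions become exactly \ref{item:ass_calmness_Yx} and \ref{item:ass_closedness}, and reads off the corollary from \cref{thm:decomposable_characterization}.
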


Let us relate \cref{cor:C2_characterization} to
\cite[Theorem~5.6]{MordukhovichSarabi2019} in detail.
The assumptions imposed therein are the calmness of $Y_{\bar x}$
at $((0,0),\bar y)$ and the closedness of the set
\[
	\KK_D(\bar z,\bar y)^\circ
	-
	\bigl(\KK_D(\bar z,\bar y)^\circ\cap\ker F'(\bar x)^\top\bigr),
\]
where $\KK_D(\bar z,\bar y):=T_D(\bar z)\cap\{\bar y\}^\perp$
denotes the critical cone to $D$ at $\bar z$ for $\bar y$.
The first is precisely condition \ref{item:ass_calmness_Yx}.
As for the second one, the reduction \eqref{eq:C2_reduction} gives
$T_D(\bar z)=h'(\bar z)^{-1}(K)$ and, in view of $\bar y=h'(\bar z)^\top\bar\mu$,
this yields $\KK_D(\bar z,\bar y)=h'(\bar z)^{-1}(\KK)$.
Polarization, see \cite[Corollary~6.21]{RockafellarWets1998},
together with the injectivity of $h'(\bar z)^\top$,
which guarantees closedness of $h'(\bar z)^\top\KK^\circ$, thus, gives
\[
	\KK_D(\bar z,\bar y)^\circ
	=
	h'(\bar z)^\top\KK^\circ
	=
	D(\partial g)(\bar z,\bar y)(0),
\]
where the second equality has been observed in
\cref{rem:closedness_representation_free}.
In other words, the closedness assumption of
\cite[Theorem~5.6]{MordukhovichSarabi2019} is nothing else but the
representation-free form \eqref{eq:closedness_original} of condition
\ref{item:ass_closedness}, so that \cref{rem:closedness_representation_free}
renders \cite[Lemma~5.2]{MordukhovichSarabi2019} superfluous as well.
Hence, \cref{cor:C2_characterization} operates precisely under the
assumptions of \cite[Theorem~5.6]{MordukhovichSarabi2019} and recovers the
characterization of noncriticality obtained therein,
with semi-isolated calmness expressed as the error bound
\eqref{eq:Isol_Calm_M_estimate_via_y_special},
see \cref{lem:semi_isolated_calmness_as_error_bound},
enriched by the counterpart in terms of the error bound \eqref{eq:Isol_Calm_M_estimate_via_y}.

Let us close this section with a broader perspective.
\cref{thm:full_characterization_of_error_bound} identifies the precise mechanism
behind the connection of noncriticality and semi-isolated calmness in the form
of the two conditions \ref{item:weak_inner_calmness} and \ref{item:calmness},
which are not only sufficient (together with noncriticality) but also necessary ---
yet, being tailored exactly to this purpose, they may be difficult to verify as such.
The contribution of \cite{MordukhovichSarabi2019} can then be interpreted as
the identification of a rich and practically important class of problems,
governed by $C^2$-cone reducible constraint systems,
together with verifiable assumptions which secure precisely these two conditions,
and, as demonstrated in \cref{sec:decomposable} by means of the second-order
analysis from \cite{HangSarabi2023}, this mechanism extends to the
substantially larger class of $C^2$-decomposable functions.
In this sense, the approaches complement each other:
the abstract analysis of \cref{sec:main} isolates the essence of the equivalence,
while the decomposable setting renders this essence operational,
constituting, besides the polyhedral setting of \cref{sec:gph_subdiff_polyhedral},
the second major class of problems where the conditions
\ref{item:weak_inner_calmness} and \ref{item:calmness} are verifiable.

\section{Conclusions}\label{sec:conclusions}

In this paper, we have characterized error bounds which are decisive for the
local convergence analysis of numerical optimization methods in terms of noncriticality of the
underlying multiplier.
The abstract viewpoint of \cref{sec:main}, where merely a set-valued mapping
assigning primal-dual pairs to a parameter is available, revealed that,
besides noncriticality, precisely two calmness-type properties of the associated multiplier
mapping are responsible for this equivalence, and that both of them are
indispensable.
Specifying this mechanism in composite optimization, we have shown that these
properties come for free in the polyhedral setting, while, beyond polyhedrality,
they are secured by the calmness of a restricted multiplier mapping together with
a closedness condition for the entire class of $C^2$-decomposable functions.

Let us point out some directions which seem to be worth pursuing.
First, the notion of noncriticality studied here is based on the graphical
derivative and, thus, lacks robustness w.r.t.\ perturbations of the reference point.
As discussed in \cref{sec:perturbations}, replacing the graphical derivative by one
of its robust counterparts gives rise to a whole family of stronger notions,
and their interrelations, as well as their potential to yield superlinear
convergence of Newton-type methods, remain to be explored.
In this regard, the semi-strict graphical derivative from \cite{HangSarabi2023}
seems to be a particularly promising candidate.
Second, since conditions \ref{item:weak_inner_calmness} and \ref{item:calmness}
are not only sufficient but also necessary, any further result which ties
noncriticality to error bounds has to secure precisely these two properties,
and it would be interesting to identify additional classes of functions $g$,
beyond the ones discussed in this paper,
where this can be accomplished.
Finally, the interplay of the error bounds studied here with the convergence
analysis of augmented Lagrangian and sequential quadratic programming methods
in the general setting of \eqref{eq:composite_problem}
is a natural topic for future research.

\nocite{*}


\end{document}